\DeclareMathOperator{\Aut}{Aut}
\DeclareMathOperator{\Char}{char}
\DeclareMathOperator{\Det}{det}
\DeclareMathOperator{\Fr}{Fr}
\DeclareMathOperator{\Deg}{deg}
\DeclareMathOperator{\Sp}{Sp}
\DeclareMathOperator{\Id}{Id}
\DeclareMathOperator{\Ht}{ht}
\DeclareMathOperator{\Rk}{rank}
\DeclareMathOperator{\J}{J}
\DeclareMathOperator{\GL}{GL}
\DeclareMathOperator{\Ind}{Ind}
\newcommand{\TAut}{\operatorname{TAut}}
\newtheorem{thm}{Theorem}[section]
\newtheorem{lem}[thm]{Lemma}
\newtheorem{prop}[thm]{Proposition}
\newtheorem{cor}[thm]{Corollary}
\newtheorem{conj}[thm]{Conjecture}
\newtheorem{Def}[thm]{Definition}
\newtheorem{remark}[thm]{Remark}
\begin{document}
\fontsize{11}{11pt}\selectfont
\renewcommand{\thefootnote}{\fnsymbol{footnote}}
\footnotetext{\emph{2010 Mathematics Subject Classification:} 14R10}
\footnotetext{\emph{Key words:} automorphisms of Weyl algebra, polynomial symplectomorphisms, deformation quantization.}
\renewcommand{\thefootnote}{\arabic{footnote}}
\fontsize{12}{12pt}\selectfont
\title{\bf Augmented Polynomial Symplectomorphisms and Quantization}
\renewcommand\Affilfont{\itshape\small}

\author[1]{Alexei Belov-Kanel\thanks{kanel@mccme.ru}}
\author[2,3]{Andrey Elishev\thanks{elishev@phystech.edu}}
\author[1]{Jie-Tai Yu\thanks{jietaiyu@szu.edu.cn}}

\affil[1]{College of Mathematics and Statistics, Shenzhen University, Shenzhen, 518061, China}
\affil[2]{Laboratory of Advanced Combinatorics and Network Applications, Moscow Institute of Physics and Technology, Dolgoprudny, Moscow Region, 141700, Russia}
\affil[3]{Department of Innovations and High Technology, Moscow Institute of Physics and Technology, Dolgoprudny, Moscow Region, 141700, Russia}

\date{}

\maketitle
\renewcommand{\abstractname}{Abstract}
\begin{abstract}
The objective of this paper is the proof of a conjecture of Kontsevich \cite{BKK1} on the isomorphism between groups of polynomial symplectomorphisms and automorphisms of the corresponding Weyl algebra in characteristic zero. The proof is based on the study of topological properties of automorphism $\Ind$-varieties of the so-called augmented and skew augmented versions of Poisson and Weyl algebras. Approximation by tame automorphisms as well as a certain singularity analysis procedure is utilized in the construction of the lifting of augmented polynomial symplectomorphisms, after which specialization of the augmentation parameter is performed in order to obtain the main result.
\end{abstract}

\section{Introduction}

Let $W_{n, \mathbb{K}}$ denote the $n$-th Weyl algebra over a field $\mathbb{K}$, which is by definition the quotient of the free associative algebra
\begin{equation*}
\mathbb{K}\langle a_1,\ldots,a_n,b_1,\ldots,b_n\rangle
\end{equation*}
by the two-sided ideal generated by elements
\begin{equation*}
b_ia_j-a_jb_i-\delta_{ij},\;\;a_ia_j-a_ja_i,\;\;b_ib_j-b_jb_i.
\end{equation*}
Let also $P_{n,\mathbb{K}}$ denote the commutative polynomial algebra $\mathbb{K}[x_1,\ldots,x_{2n}]$ equipped with the standard Poisson bracket
\begin{equation*}
\lbrace x_i,x_j\rbrace = \omega_{ij} \equiv \delta_{i,n+j}-\delta_{i+n,j}
\end{equation*}
($\delta_{ij}$ is the Kronecker symbol). We will call $P_{n,\mathbb{K}}$ the (commutative) \textbf{Poisson algebra}. The Poisson algebra $P_n$ is the classical counterpart of the Weyl algebra $W_n$ with respect to the deformation quantization. It is known that in the case of the polynomial algebra, the deformation quantization admits a natural reverse procedure, which we will call anti-quantization, that allows one to construct classical objects (that is, objects in $P_n$ or those defined by subsets of $P_n$) from quantized ones. The correspondence takes a rather elaborate form in characteristic zero.

This paper is dedicated to the study of the following conjecture, proposed by Kontsevich.
\begin{conj} \label{mainconj}
Whenever $\Char \mathbb{K} = 0$, one has the following (canonical) isomorphism
$$
\Aut W_{n, \mathbb{K}}\simeq \Aut P_{n, \mathbb{K}}.
$$
\end{conj}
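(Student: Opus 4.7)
The plan is to establish the isomorphism by constructing a canonical two-sided inverse pair of homomorphisms. For the direction $\Aut W_{n, \mathbb{K}} \to \Aut P_{n, \mathbb{K}}$, I would invoke the reduction modulo $p$ construction of Belov-Kanel--Kontsevich and Tsuchimoto: spreading $W_n$ to an integral form and reducing modulo a large prime $p$ turns it into an Azumaya algebra whose center is canonically identified with a Frobenius twist of $P_n$, so every automorphism descends to a Poisson automorphism of the center, and a standard ultraproduct or limit procedure yields a well-defined homomorphism $\Phi : \Aut W_{n, \mathbb{K}} \to \Aut P_{n, \mathbb{K}}$ in characteristic zero. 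The substantial work lies in constructing the inverse.

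The inverse is built via an augmentation device. I would introduce augmented algebras $\widetilde{P}_n$ and $\widetilde{W}_n$, together with their skew-augmented variants, obtained by adjoining a formal parameter in a manner compatible with both the Poisson bracket and the noncommutative product. Their automorphism groups inherit natural $\Ind$-scheme structures, and the first key step is to show that the tame subgroup $\TAut \widetilde{P}_n$ is dense in $\Aut \widetilde{P}_n$ with respect to the $\Ind$-topology. Since each tame generator is an elementary symplectic transformation, it lifts uniquely to an automorphism of $\widetilde{W}_n$, producing a canonical lift $L_{\mathrm{tame}} : \TAut \widetilde{P}_n \to \TAut \widetilde{W}_n$.

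The main obstacle, and the core of the proof, is extending $L_{\mathrm{tame}}$ by continuity to a homomorphism $\widetilde{L} : \Aut \widetilde{P}_n \to \Aut \widetilde{W}_n$. Concretely, if $\{\varphi_k\}$ is a net of tame symplectomorphisms converging to some $\varphi$ in the $\Ind$-topology, one must verify that the lifts $L_{\mathrm{tame}}(\varphi_k)$ converge in $\Aut \widetilde{W}_n$ and that the limit depends only on $\varphi$. The difficulty is that coefficients of the lifted maps may develop singularities in the augmentation parameter; the singularity analysis referenced in the abstract is precisely a bookkeeping of admissible singular behavior, organized through the skew-augmented structure, which should supply the rigidity needed to control these limits. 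I expect this to be the hardest step by a wide margin, since everything else is either formal or follows from the $\Ind$-scheme structure.

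With the continuous lift $\widetilde{L}$ in hand, the final step is specialization: evaluating the augmentation parameter at the appropriate point yields the desired homomorphism $L : \Aut P_{n, \mathbb{K}} \to \Aut W_{n, \mathbb{K}}$. The identities $\Phi \circ L = \Id$ and $L \circ \Phi = \Id$ are then verified first on tame automorphisms, where both composites reduce to elementary quantization, and extended to all of $\Aut P_{n, \mathbb{K}}$ and $\Aut W_{n, \mathbb{K}}$ by density and continuity, yielding the canonical isomorphism predicted by Conjecture \ref{mainconj}.
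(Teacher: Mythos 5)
Your outline follows the same high-level strategy as the paper: construct $\Phi$ via mod-$p$ reduction, introduce the $h$-augmented and skew-augmented algebras, use tame approximation plus a singularity analysis to build a continuous inverse, then specialize to $h=1$. However, two of your steps misidentify where the real difficulties lie. You claim the obstacle to extending $L_{\mathrm{tame}}$ by continuity is that ``coefficients of the lifted maps may develop singularities in the augmentation parameter.'' That is not what the paper's singularity trick is about. It examines poles at $t=0$ of curves $\Lambda(t)\circ\varphi\circ\Lambda(t)^{-1}$, where $\Lambda(t)$ is a one-parameter family of \emph{linear} changes of the generators, and uses the (non-)singularity of such curves to characterize the power-series neighborhoods $H_N^{u,v,k}$ and $G_N^{u,v,k}$; the skew variables $k_{ij}$ are introduced precisely so that generic, non-symplectic linear conjugations are available and the criterion is an if-and-only-if. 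Continuity of $\Phi^{hk}$ then gives canonicity of the lifted formal limit, but \emph{polynomiality} of that limit still requires a separate dilation argument, which your outline omits entirely.

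Your specialization step is also understated. The lifting $\Theta^h$ is defined only on symplectomorphisms polynomial in $h$; but an ordinary element of $\Aut P_{n,\mathbb{C}}$, after conjugation by the rescaling $x_i\mapsto hx_i$, becomes a $\mathbb{C}(h)$-automorphism whose generator images carry negative powers of $h$. The paper must therefore extend the domain of $\Theta^h$ to rational-in-$h$ points by an explicit auxiliary-variable twisting device, then show (via consistency with mod-$p$ reduction and a Newton-polyhedron argument) that the recovered generator images are canonical, and finally verify the Weyl commutation relations among them before setting $h=1$. This occupies the entire last subsection of the paper and cannot be compressed to ``evaluating the augmentation parameter at the appropriate point.'' As written, your plan would stall at both of these junctures.
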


The functor $\Aut$ returns the group of automorphisms of a $\mathbb{K}$-algebra. Henceforth we call the elements of $\Aut P_{n,\mathbb{K}}$ \textbf{polynomial symplectomorphisms}, keeping in mind the fact that this group is in natural one-to-one correspondence with the group of polynomial automorphisms of the affine space $\mathbb{A}^{2n}_{\mathbb{K}}$ which preserve the standard symplectic structure. Conjecture \ref{mainconj} then states that in characteristic zero, the group of Weyl algebra automorphisms is naturally isomorphic to the group of polynomial symplectomorphisms.

In the work \cite{BKK1}, various approaches to Conjecture \ref{mainconj} along with its generalizations were considered. In particular, a group homomorphism
\begin{equation*}
\Phi: \Aut W_{n,\mathbb{C}}\rightarrow \Aut P_{n,\mathbb{C}}
\end{equation*}
was constructed and its properties were analyzed. The morphism $\Phi$ is a candidate for the isomorphism between the automorphism groups over $\mathbb{C}$ and is in fact identical on the subgroups of so-called \textbf{tame automorphisms}. More precisely, a theorem established in \cite{BKK1} asserts that the morphism $\Phi$ induces an isomorphism of tame subgroups
\begin{equation*}
\Phi:\TAut W_{n,\mathbb{C}}\xrightarrow{\sim} \TAut P_{n,\mathbb{C}}.
\end{equation*}

The construction of $\Phi$ goes back to Tsuchimoto \cite{Tsu2} and relies on reduction of the Weyl algebra to positive characteristic. The essence of the latter procedure, which we will describe in the next section along with the definition of $\Phi$, is the representation of the ground field $\mathbb{C}$ as a reduced direct product, modulo a fixed non-principal ultrafilter $\mathcal{U}$ on the index set, of algebraically closed fields $\mathbb{F}_p$ of positive characteristic $p$
$$
\mathbb{C}\simeq \left(\prod_{p} \mathbb{F}_p\right) /\;\mathcal{U}
$$
with $p$ running along an $\mathcal{U}$-unbounded sequence of primes. This rather unusual technique allows one to use the fact that in characteristic $p$ the algebra $$W_{n,\mathbb{F}_p}\simeq \mathbb{F}_p[x_1,\ldots, x_n, d_1,\ldots, d_n]$$ is Azumaya over its center; the center itself is the (commutative) polynomial algebra generated by the $p$-th powers of algebra generators:
$$
\mathbb{F}_p[x_1^p,\ldots, x_n^p, d_1^p,\ldots, d_n^p]
$$
and -- crucially -- possesses a Poisson bracket induced from the commutator in $W_n$. The endomorphisms of $W_n$ also preserve the center and so can be restricted to this Poisson algebra to produce symplectic polynomial mappings; those can then be re-assembled from the ultraproduct decomposition and returned to characteristic zero, and the procedure is manifestly homomorphic. Note that this construction (specifically the ultraproduct decomposition of $\mathbb{C}$) requires the ground field to be algebraically closed, which, along with the assumption of Conjecture \ref{mainconj} that $\Char\mathbb{K}=0$ allows one to set $\mathbb{K}=\mathbb{C}$ without loss of generality. The approach to Conjecture \ref{mainconj} involving the morphism $\Phi$, therefore, excludes the case of the rational numbers as the ground field.

\smallskip

The main result of the present paper is as follows.

\medskip

\begin{thm}[Main Theorem]\label{mainthm}
The homomorphism
$$
\Phi: \Aut W_{n,\mathbb{C}}\rightarrow \Aut P_{n,\mathbb{C}}
$$
defined previously is a group isomorphism.
\end{thm}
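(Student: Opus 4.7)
The plan is to upgrade the already-established isomorphism $\Phi:\TAut W_{n,\mathbb{C}}\xrightarrow{\sim}\TAut P_{n,\mathbb{C}}$ on tame subgroups to a bijection on the full automorphism groups by combining an $\Ind$-topological density argument with an explicit lifting procedure, with the augmentation parameter serving as a regularizing device. First I would introduce the augmented (and skew-augmented) versions of $W_{n,\mathbb{C}}$ and $P_{n,\mathbb{C}}$, obtained by adjoining an additional central parameter; their automorphism groups acquire natural $\Ind$-variety structures refining those of $\Aut W_{n,\mathbb{C}}$ and $\Aut P_{n,\mathbb{C}}$, and the morphism $\Phi$ extends to a compatible homomorphism $\Phi^{\mathrm{aug}}$ of augmented groups. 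Since specializing the augmentation parameter to a fixed value recovers $\Phi$ and commutes with the formation of automorphism groups, it suffices to prove bijectivity of $\Phi^{\mathrm{aug}}$ and then specialize.

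Next I would establish bijectivity of $\Phi^{\mathrm{aug}}$. Injectivity should follow from the characteristic-$p$ description of $\Phi$ via the Azumaya structure: a hypothetical nontrivial element of $\Ker\Phi^{\mathrm{aug}}$ would yield, on each component of the ultraproduct decomposition, a Weyl automorphism acting trivially on the Poisson center, and such automorphisms are inner and constrained enough that the augmentation rules them out. For surjectivity, given $\sigma\in\Aut P_{n,\mathbb{C}}^{\mathrm{aug}}$, I would approximate $\sigma$ by tame augmented symplectomorphisms $\sigma_k\in\TAut P_{n,\mathbb{C}}^{\mathrm{aug}}$ in the $\Ind$-topology, lift each $\sigma_k$ via the tame isomorphism to obtain Weyl automorphisms $\tau_k\in\TAut W_{n,\mathbb{C}}^{\mathrm{aug}}$, and show the sequence $\{\tau_k\}$ converges in the $\Ind$-topology on $\Aut W_{n,\mathbb{C}}^{\mathrm{aug}}$ to a limit $\tau$ satisfying $\Phi^{\mathrm{aug}}(\tau)=\sigma$.

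The main obstacle will be the convergence of the lifts $\tau_k$. The tame isomorphism does not obviously extend continuously to the full $\Ind$-topology, so although the $\sigma_k$ converge, the coefficients of the Weyl lifts $\tau_k$ may a priori develop singularities. This is where the singularity analysis procedure alluded to in the abstract enters: by controlling the dependence of $\tau_k$ on the augmentation parameter, one shows that any candidate pole in the coefficients is matched by a zero along the augmentation direction, so that after a suitable normalization (and ultimately after specialization) the limit is well-defined and finite. Concretely, one would analyze the principal part of $\tau_k$ near each singular locus in the parameter space and exploit $\sigma_k\to\sigma$ together with the $\Ind$-closedness of the image of the tame subgroup to exclude genuine blow-up. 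The role of the skew augmented variant is to provide an additional family in which the singularity analysis can be made uniform, removing the obstructions that would otherwise appear for individual $\sigma$.

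Finally, once bijectivity of $\Phi^{\mathrm{aug}}$ is established, specialization of the augmentation parameter yields bijectivity of $\Phi:\Aut W_{n,\mathbb{C}}\to\Aut P_{n,\mathbb{C}}$. The homomorphism property is already known and preserved under specialization, so this completes the proof of Conjecture \ref{mainconj}. I expect the density of tame automorphisms in the $\Ind$-topology on the augmented groups to play an auxiliary but essential role throughout, providing both the approximating sequences and the framework in which the singularity analysis is carried out.
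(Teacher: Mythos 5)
Your overall plan — augment, approximate by tame automorphisms, lift, control convergence by singularity analysis, then specialize — correctly identifies the skeleton of the paper's argument, and your surjectivity sketch is in the right spirit. However, there are two genuine gaps that would need to be filled before this is a proof.

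First, the specialization step is not the routine final move you make it out to be. You write that specialization ``recovers $\Phi$ and commutes with the formation of automorphism groups, so it suffices to prove bijectivity of $\Phi^{\mathrm{aug}}$ and then specialize.'' This is false as stated: not every automorphism of $P_{n,\mathbb{C}}$ arises by specializing an automorphism of $P^h_{n,\mathbb{C}}$ that is \emph{polynomial} in $h$. The natural way to promote $\varphi\in\Aut P_{n,\mathbb{C}}$ to an $h$-augmented automorphism (conjugating by $x_i\mapsto hx_i$, $p_j\mapsto p_j$) produces a map whose generator images generically contain \emph{negative} powers of $h$, and the lifting $\Theta^h$ constructed from tame approximation is only defined on the $h$-polynomial locus. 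The paper therefore has to extend the domain of $\Theta^h$ to automorphisms rational in $h$, by an explicit construction: for each generator one introduces a pair of auxiliary variables $u,v$, twists $\varphi$ by $\psi_\lambda:u\mapsto u+\lambda x_i$, $p_i\mapsto p_i-\lambda v$ with $\lambda=h^k$ for $k$ large, lifts the twisted (now $h$-polynomial) automorphism, reads off the image of $x_i$ from the coefficient of $\hat u$, and then verifies canonicity (Lemmas~\ref{twistedliftcanon}, \ref{formlemma}) and the Weyl commutation relations (Lemmas~\ref{commutationlemma1}, \ref{commutationlemma2}) of the assembled map. None of this appears in your outline, and without it the reduction to the augmented case does not close.

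Second, you conflate two distinct problems: convergence of the lifted sequence $\{\tau_k\}$, and polynomiality of its limit. Convergence in the power series topology — which is what the singularity trick via conjugation by linear curves $\Lambda(t)$ of bounded order actually controls, by establishing continuity of $\Phi^{hk}$ on $\Aut_{u,v,k}$ (Proposition~\ref{skewthetaprop}, Theorem~\ref{skewcontthm}) — only produces an element of the \emph{power series completion} $\hat W^h_{n,\mathbb{C}}$. Showing that this limit is in fact a polynomial automorphism is a separate theorem (Theorem~\ref{thmpolynomial}) whose proof uses a different device: conjugating the candidate symplectomorphism by dilations $\tau_\lambda$, twisting by a $\psi_\lambda$ as above, and showing that if the lifted image of some $x_i$ were a genuine infinite series, the resulting curve $\varphi_{t,\lambda}$ would be regular at $\lambda=0$ while its lift would have unbounded negative powers of $\lambda$, contradicting the polynomial dependence of the lift's coordinates on those of the source (Proposition~\ref{thetaproperties}). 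Your description of the singularity analysis (``any candidate pole in the coefficients is matched by a zero along the augmentation direction'') misattributes the mechanism: in the paper the parameter in the singularity trick is the conjugating curve variable $t$, not the augmentation variable $h$, and its purpose is to prove $\Phi^{hk}(G_N^{u,v,k})\subseteq H_N^{u,v,k}$, i.e. continuity of the direct morphism, which then gives canonicity of the lifting (independence of the approximating tame sequence, Proposition~\ref{propcanon}) rather than finiteness of the limit.
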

\medskip

\smallskip

In order to prove that $\Phi$ is an isomorphism, one may try and construct its inverse. To do so, one needs to find a way to \textbf{lift} polynomial symplectomorphisms to Weyl algebra automorphisms over $\mathbb{C}$. A viable approach to this lifting problem is made possible by the tame isomorphism property of $\Phi$ as stated above; indeed, tame symplectomorphisms are lifted unambiguously, therefore if one could find a suitable topology on $\Aut P_{n,\mathbb{C}}$, such that $\TAut P_{n,\mathbb{C}}$ were a dense subgroup, one would represent arbitrary polynomial symplectomorphisms as limits of sequences of tame symplectomorphisms and then make the limits of the pre-images under $\Phi$ of those sequences into automorphisms of $W_{n,\mathbb{C}}$. Such a topology does in fact exist and is the so-called formal power series topology (or, as we sometimes refer to it, augmentation topology) introduced in the classical work of Anick \cite{An}. A symplectic version of Anick's approximation results also holds, as we have shown in our recent work \cite{KGE}, so that the path to symplectomorphism lifting is seemingly clear.

\medskip

The lifting procedure done in this way, however, results in certain complications. The most obvious one is the fact that the lifted limit is an automorphism of the power series completion of $W_n$, so that in order to complete the lifting one must find a way to prove the polynomial character of such series. There is, however, no straightforward way of doing so -- certain topological properties which would otherwise guarantee canonicity of the procedure are missing. One way around this is the introduction of deformation (or augmentation) of the algebra and its Poisson structure by a central variable $h$, so that
$$
\lbrace p_i, x_j\rbrace = h\delta_{ij}
$$
in the new algebra. This augmentation modifies (homogenizes) the Poisson structure to match the formal power series topology which defines tame approximation. Furthermore, the Poisson structure can be further distorted by allowing non-zero commutation of distinct generators, in such a way as to make the morphisms in question continuous. The tradeoff of the augmentation approach is the need to \emph{specialize} the new variable $h$ (to $h = 1$) in order to return to the original conjecture for non-augmented algebras. This is achieved by the extension of the domain of the lifting map to points rational in the augmentation parameter. The procedure is {\bf not at all trivial } and requires the invertibility of mappings corresponding to points in $\Aut$ (that is, it does not work with endomorphisms).

\smallskip

In order to obtain the stronger topological properties of tame approximation (continuity of the loop morphism in the skew augmented case), we use a certain \textbf{singularity trick}. In the broadest terms, it is a proof technique that allows one, by examining orders of singularities of certain curves and their images under the studied morphism, to obtain useful data on the morphism. The technique was utilized in \cite{KBYu} as well as in the main text. It is not applicable in the non-augmented case.

\smallskip

In \cite{BKK1}, several generalizations of Conjecture \ref{mainconj} were studied. The most general reformulation has to do with holonomic modules over the Weyl algebra (holonomic $\mathcal{D}$-modules) and is stated as follows.
\begin{conj}\label{conjhol}
There is a one-to-one correspondence between irreducible holonomic $\mathcal{D}$-modules over $W_n$ and lagrangian subvarieties of the affine space (of corresponding dimension).
\end{conj}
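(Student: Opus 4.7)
The plan is to leverage the group isomorphism $\Aut W_{n,\mathbb{C}} \simeq \Aut P_{n,\mathbb{C}}$ of Theorem \ref{mainthm} to transport the classification of irreducible holonomic $\mathcal{D}$-modules to one of irreducible Lagrangian subvarieties of $\mathbb{A}^{2n}_{\mathbb{C}}$. The natural bridge is the characteristic variety functor: to any holonomic $\mathcal{D}$-module $M$ one associates its support $\mathrm{Ch}(M)\subset T^*\mathbb{A}^n \simeq \mathbb{A}^{2n}$, which by the Gabber--Kashiwara involutivity theorem is coisotropic of pure dimension $n$, hence Lagrangian. Irreducibility of $M$ transfers to irreducibility of $\mathrm{Ch}(M)$, giving the forward map. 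Compatibility with the $\Aut W_n$-action on modules and the $\Aut P_n$-action on subvarieties is immediate by construction, and $\Phi$ then intertwines these actions.

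To construct the inverse, I would quantize an irreducible Lagrangian $L\subset \mathbb{A}^{2n}$ to a holonomic module supported on $L$. For smooth $L$ admitting a half-form this is the classical Fedosov--Kontsevich deformation quantization along $L$; for general $L$ the construction should mirror the strategy of the Main Theorem, namely approximation of $L$ by \emph{tame} Lagrangians in the augmentation topology of Anick, lifting the corresponding holonomic modules through the skew-augmented deformation of $W_n$, and then specializing the augmentation parameter to $h = 1$ along rational points. Concretely, the ultraproduct-and-descent mechanism underlying $\Phi$ should interchange the defining ideal of $L$ in $P_n$ with a left ideal $I \subset W_n$ whose quotient $W_n/I$ is the desired module, and the dimension count $\dim \mathrm{Ch}(W_n/I) = n$ would verify holonomicity while recovering $L$.

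The principal obstacle is the non-uniqueness (and possible non-existence) of quantization for singular Lagrangians. Even in the smooth case there is an $H^1(L,\mathbb{C}^{\times})$-worth of flat-line-bundle twists together with a $\mu_2$-gerbe obstruction coming from the half-form choice, so the correspondence can be bijective only after one fixes an equivalence relation on the $\mathcal{D}$-module side. For singular $L$ there is no a priori guarantee that a holonomic lift exists, and tame approximation may produce limits whose characteristic cycle acquires unwanted components or embedded points. Overcoming this will require (i) a precise formulation of the equivalence relation, most likely in terms of twisted $\mathcal{D}$-modules on the smooth locus of $L$, and (ii) a closedness-of-irreducibility statement in the relevant $\Ind$-variety of left ideals, which should be the direct analogue of the singularity trick of \cite{KBYu} used elsewhere in the paper. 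This second step is where I expect the technical heart of the argument to lie, since a naive limit of irreducible holonomic modules in the augmentation topology need not remain irreducible, and the skew-augmented framework is precisely what is designed to control such pathologies.
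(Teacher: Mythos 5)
Conjecture \ref{conjhol} is \emph{not proved in the paper}: it is stated as an open conjecture, with the authors attributing one direction (module $\to$ Lagrangian) to Bitoun and Van den Bergh and citing Dodd for further partial results via the $p$-support. There is therefore no ``paper's own proof'' to compare your attempt against, and the paper's main theorem concerns the automorphism groups, not the module correspondence. Your proposal is a heuristic programme that openly acknowledges major gaps, which is a reasonable posture for a conjecture, but it also contains a concrete mathematical error in the one step you do present as settled.

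The error is in the forward map. You propose to send $M$ to its characteristic variety $\mathrm{Ch}(M)\subset T^*\mathbb{A}^n$ via Gabber--Kashiwara involutivity. But $\mathrm{Ch}(M)$ is always a \emph{conical} subvariety (stable under the scaling action on the cotangent fibers), whereas Kontsevich's conjecture concerns arbitrary Lagrangian subvarieties of $\mathbb{A}^{2n}$, the bulk of which are not conical. The canonical counterexample is $M = W_1/W_1(\partial - 1)$, which corresponds to $e^x$: its characteristic variety is the zero section $\{\xi = 0\}$, the same as for the trivial module $W_1/W_1\partial$, so the characteristic variety cannot separate irreducible holonomic modules and cannot hit the non-conical Lagrangian $\{\xi = 1\}$. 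The correct invariant, which is what Bitoun, Van den Bergh and Dodd work with, is the \emph{$p$-support} (arithmetic support) of Kontsevich \cite{Kon2}: one reduces $M$ modulo a large prime $p$, regards the reduction as a module over the center of $W_{n,\mathbb{F}_p}$ (which is a polynomial ring in $2n$ variables), takes its support, and untwists by Frobenius. This lives in $\mathbb{A}^{2n}$ with no conical constraint, it assigns $\{\xi = 1\}$ to $e^x$, and its Lagrangianness is a genuine theorem (Bitoun, Van den Bergh) rather than a consequence of Gabber's involutivity. In addition, your claim that irreducibility of $M$ implies irreducibility of $\mathrm{Ch}(M)$ is false in general; characteristic varieties of simple holonomic modules are frequently reducible. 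Replacing $\mathrm{Ch}(M)$ by the $p$-support throughout would align your sketch with the actual state of the art, but would still leave the inverse direction and the equivalence relation on the module side as open problems, exactly as the paper says.
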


One direction in the Conjecture \ref{conjhol} -- namely the construction of a lagrangian subvariety from a given holonomic module -- has been accomplished by Bitoun \cite{Bit} and, independently, Van den Bergh \cite{VdB}, who gave a conceptually different proof. Also, the one-dimensional case of Conjecture \ref{conjhol} was studied in \cite{K-BE}.


Dodd \cite{Dodd} has established a number of far-reaching results of homological nature which, as far as our understanding is, imply a version of Conjecture \ref{conjhol} (cf. Theorem 1, Corollary 2 and Theorem 3 of \cite{Dodd}). His argument is based on properties of the so-called $p$-support, defined by Kontsevich in \cite{Kon2}.

Our approach is different from Dodd's and focuses mainly of the topological properties of automorphism ind-varieties. This technique is in line with the philosophy of Shafarevich and his school.

\smallskip

Most of our analysis takes place over algebraically closed base field -- in particular, Tsuchimoto's construction of the homomorphism $\Phi$ leads to an algebraically closed "universal" base field of characteristic zero. However, in \cite{BKK1}, a slightly different, more general construction of a group homomorphism over the rationals is presented -- namely, it is proved in \cite{BKK1} that for $R$ a commutative ring there exists a (unique) group homomorphism
$$
\phi_R: \Aut W_{n,R}\rightarrow \Aut P_{n,R_{\infty}}
$$
where
\begin{equation*}
R_{\infty}=\lim_{\rightarrow}\left( \prod_{p} R'\otimes \mathbb{Z}/p\mathbb{Z}\;/\;\bigoplus_{p} R'\otimes \mathbb{Z}/p\mathbb{Z}\right),
\end{equation*}
is the reduction modulo infinite prime. The image  $\phi_R(f)$ is essentially (untwisted by the Frobenius morphism) a collection of restrictions to centers of Weyl algebras over $\mod\;p$ reductions of a finitely generated subring $R'\subset R$ over which the automorphism $f$ is defined. This construction is supposed to be the replacement of $\phi_{[p]}$ for the case of arbitrary base field of characteristic zero (or equivalently the field $\mathbb{Q}$ of rational numbers). In light of that, one has the following statement (Conjecture 3 of \cite{BKK1}):
\begin{conj}\label{conj3}
The image of $\phi_R$ belongs to
$$
\Aut P_{n,i(R)\otimes \mathbb{Q}}
$$
where $i:R\rightarrow R_{\infty}$ denotes the tautological inclusion.
\end{conj}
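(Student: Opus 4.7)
The plan is to reduce the statement to a calculation of coefficients that is uniform in the prime $p$. First, since an automorphism is determined by finitely many coefficients, there exists a finitely generated $\mathbb{Z}$-subalgebra $R' \subset R$ such that both $f$ and $f^{-1}$ descend to automorphisms of $W_{n,R'}$; replace $R$ by $R'$. For every prime $p$ the reduction $\bar{f}_p \in \Aut W_{n, R'/pR'}$ preserves the center $Z_p \cong R'/pR'[a_1^p,\ldots,b_n^p]$, and, after the Frobenius untwist $a_i^p \leftrightarrow x_i$, $b_j^p \leftrightarrow x_{n+j}$, restricts to a polynomial symplectomorphism $\phi_p$, which is by definition the $p$-component of $\phi_R(f)$. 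The task is to exhibit a single $F \in \Aut P_{n, R' \otimes \mathbb{Q}}$ whose mod-$p$ reduction agrees with $\phi_p$ for all but finitely many $p$; since $\phi_R(f)$ is an element of $\Aut P_{n,R_\infty}$ only up to $\bigoplus_p$, this suffices.

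The technical heart is the explicit computation of $\phi_p$. Write $f(a_i) = P_i(a,b)$ and $f(b_j) = Q_j(a,b)$ in PBW normal form with coefficients in $R'$. Then $\phi_p(a_i^p) = P_i(a,b)^p \in Z_p$ because $p$-th powers are central in $W_{n, \mathbb{F}_p}$. Using iterated application of Jacobson's formula $(u+v)^p = u^p + v^p + \Lambda_p(u,v)$ together with the commutator relations $[b_i,a_j] = \delta_{ij}$ (which are scalars, so repeated brackets terminate quickly), one expands $P_i(a,b)^p$ as a normal-form element of $W_{n,R'/pR'}$. The leading part is the naive Frobenius $\bar{P}_i(a^p, b^p)$ obtained from the commutative symbol, and the corrections are a universal polynomial $\Lambda^{(p)}$ in the coefficients of $P_i$ whose scalar factors are rational numbers coming from binomial and symmetrization sums controlled by the Bernstein filtration.

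To promote this to a single lift over $R' \otimes \mathbb{Q}$, I track the denominators that appear in $\Lambda^{(p)}$: because the Weyl-algebra commutators reduce polynomial degree and $P_i$ has bounded degree, only finitely many factorials $1/k!$ enter into the universal expression; let $N$ be their product. For $p > N$ the rational formula is $p$-integral and its reduction mod $p$ coincides with $\phi_p(x_i)$. Performing the analogous analysis for $Q_j$ and for the components of $f^{-1}$ (to verify bijectivity of $F$) yields polynomials $F_1,\ldots,F_{2n} \in (R'[1/N!])[x_1,\ldots,x_{2n}]\subset (R'\otimes \mathbb{Q})[x_1,\ldots,x_{2n}]$ defining the required $F \in \Aut P_{n, R'\otimes\mathbb{Q}}$; these match $\phi_p$ at every $p > N$, and the finitely many bad primes are absorbed by the quotient by $\bigoplus_p$. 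Hence $\phi_R(f)$ lies in the image of $\Aut P_{n, R' \otimes \mathbb{Q}} \hookrightarrow \Aut P_{n, i(R)\otimes\mathbb{Q}}$.

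The principal obstacle is the uniformity of the correction $\Lambda^{(p)}$ in $p$: Jacobson's formula itself has $p$-dependent coefficients, and it is not immediately clear that the full expansion stabilizes to a $p$-independent rational expression after one accounts for cross-terms produced by repeated $ba = ab + 1$ reorderings. A direct combinatorial proof via the restricted Lie structure of $W_{n,\mathbb{F}_p}$ is feasible but tedious; a more conceptual route is to reinterpret $\phi_R(f)$ as a classical limit of $f$ in an $\hbar$-deformation model, in the spirit of the augmented algebras developed in the Main Theorem. In the latter framework the $\mathbb{Q}$-rational structure of $F$ is built in from the start, and specialization of $\hbar$ together with tame approximation (cf.\ \cite{KGE}) formalizes the density argument behind the informal claim that the image is rationally defined. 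Transferring this augmented machinery out of the characteristic-zero setting of Theorem \ref{mainthm} to the mixed-characteristic construction $\phi_R$ is the step where the substantial new work must be done.
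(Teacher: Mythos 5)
You are attempting to prove a statement that the paper itself leaves open: Conjecture~\ref{conj3} is stated as a conjecture, cited from \cite{BKK1}, and the present paper offers no proof of it. The surrounding discussion only sketches how augmented tame approximation and specialization \emph{should} circumvent the need to know Conjecture~\ref{conj3} directly on the way to Main Theorem~2; it is explicitly a line of reasoning, not a worked-out argument. There is therefore no paper proof to compare against.

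Taken on its own terms, your direct route has a genuine gap that you yourself flag: the uniformity in $p$ of the correction $\Lambda^{(p)}$ coming from Jacobson's formula. What must be shown is that $P_i(a,b)^p$, expanded in PBW normal form and untwisted by the inverse Frobenius on coefficients, reduces mod~$p$ to a single $p$-independent expression with coefficients in $R'\otimes\mathbb{Q}$, and the observation that iterated commutators terminate quickly does not supply this: the terms $s_i(x,y)$ in Jacobson's formula involve $p-1$ iterations of $\ad$ with $p$-dependent combinatorial weights, and the Frobenius untwist is an operation on scalars, not a formal substitution, which over a non-perfect $R'/pR'$ is not even everywhere defined. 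Your fallback --- recast $\phi_R(f)$ via the $\hbar$-deformation and invoke augmented tame approximation plus specialization --- is the spirit of the paper's sketch preceding Main Theorem~2, but the paper does not carry it out in detail either, and transplanting the augmented continuity arguments from the ultraproduct setting over $\mathbb{C}$ (where the Frobenius is invertible and the fields are algebraically closed) to the mixed-characteristic construction $\phi_R$ over a general $R$ is precisely the unfinished step. Your proposal correctly identifies the real difficulty; it does not close it.
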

From that Conjecture a collection of constructible maps
$$
\phi_{n,N}: \Aut^{\leq N}W_{n,\mathbb{Q}}\rightarrow \Aut^{\leq N}P_{n,\mathbb{Q}}
$$
may be defined, as proved in \cite{BKK1}. These maps serve as generalizations of $\phi_{[p]}$ (dealt with in this paper) and constitute the conjectured canonical isomorphism of Conjecture \ref{mainconj}. What this means for the ultrafilter construction considered here is that \emph{a priori} it is not known whether $\Phi$ is defined over the rationals. However, the introduction of augmentation and augmented tame approximation should allow us to circumvent this obstacle. The line of reasoning is as follows.

\smallskip

Firstly, the isomorphism of tame subgroups (Theorem 1 of \cite{BKK1}) continues to be valid for the base field $\mathbb{Q}$. Secondly, both in augmented and non-augmented cases, tame approximation requires only characteristic zero to work, and therefore is valid over the rationals. In the augmented case, however, tame approximation is stronger in the sense that the tame isomorphism is continuous in a neighborhood of the identity map. Therefore, for Weyl $\mathbb{Q}$-algebra automorphisms sufficiently close to the identity one can take a well defined limit of the image under the tame isomorphism of an arbitrary converging tame sequence. Thus defined mapping can be extended to the whole space to yield a mapping which must coincide with the augmented version of $\phi_R$ (for $R=\mathbb{Q}$). One then specializes the augmentation parameters to return to the non-augmented case. Most of the mechanics of the specialization, as well as the proof of the Main Theorem, is adapted to this situation. Thus one should be able to obtain the proof of Conjecture \ref{mainconj} by augmented tame approximation and specialization, together with its canonicity (since the tame isomorphism is in fact independent of infinite prime, cf. \cite{K-BE2}).

From the above considerations as well as from the main ideas of the proof of the Main Theorem, we therefore have the following

\begin{thm}[Main Theorem 2]\label{mainthmgen}
The groups $\Aut W_{n,\mathbb{Q}}$ and $\Aut P_{n,\mathbb{Q}}$ are canonically isomorphic.
\end{thm}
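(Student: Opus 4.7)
The plan is to run the argument of the Main Theorem (Theorem \ref{mainthm}) verbatim but with $\Phi$ replaced by the more general homomorphism
$$
\phi_R: \Aut W_{n,R}\rightarrow \Aut P_{n,R_{\infty}}
$$
of \cite{BKK1} specialized to $R=\mathbb{Q}$. Since the target is a priori $\Aut P_{n,\mathbb{Q}_{\infty}}$ rather than $\Aut P_{n,\mathbb{Q}}$, the first thing to establish is that $\phi_{\mathbb{Q}}$ actually lands in $\Aut P_{n,\mathbb{Q}}$, i.e.\ Conjecture \ref{conj3} holds for $\mathbb{Q}$; once this is done, bijectivity will follow by repeating the lifting construction from the complex case.

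For the descent step, I would observe that the tame isomorphism of \cite{BKK1} is constructed over an arbitrary characteristic zero field and so gives $\TAut W_{n,\mathbb{Q}}\xrightarrow{\sim}\TAut P_{n,\mathbb{Q}}$ coinciding with the restriction of $\phi_{\mathbb{Q}}$. I would then pass to the $h$-augmented algebras and invoke the symplectic version of Anick's approximation (cf.\ \cite{KGE}) in the augmented setting: the crucial gain, absent in the non-augmented case, is that the augmented tame isomorphism is \emph{continuous} in the formal power series topology in a neighbourhood of the identity. For a $\mathbb{Q}$-rational augmented Weyl automorphism $f$ sufficiently close to the identity, one can thus take a $\mathbb{Q}$-rational converging tame sequence $f_k\to f$ and lift the image sequence to a well-defined limit in $\Aut$ of the augmented Poisson algebra, manifestly defined over $\mathbb{Q}$. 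By uniqueness on the dense tame subgroup, this limit must agree with the augmented version of $\phi_{\mathbb{Q}}(f)$, forcing the latter to be $\mathbb{Q}$-rational.

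Next I would extend globally: any $f\in\Aut W_{n,\mathbb{Q}}$ can be translated into a neighbourhood of the identity by composition with a tame element, and the homomorphism property of $\phi_{\mathbb{Q}}$ together with the tame isomorphism recovers $\phi_{\mathbb{Q}}(f)\in\Aut P_{n,\mathbb{Q}}$ from the local lift. Finally, I would specialize the augmentation parameter $h\to 1$ using the same invertibility / singularity analysis that drives the proof of Theorem \ref{mainthm}; since every ingredient — the algebras, the augmented morphisms, and the relevant curves used in the singularity trick — is rational in $h$, specialization preserves definability over $\mathbb{Q}$. The resulting map is the inverse of $\phi_{\mathbb{Q}}$, giving the desired group isomorphism, and its canonicity (independence of the choice of infinite prime) follows from the fact that the tame isomorphism has this property (cf.\ \cite{K-BE2}) together with density of the tame subgroup in the augmentation topology.

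The main obstacle is the specialization $h=1$. One must show that the lifting procedure, which naturally produces automorphisms of $h$-augmented power series completions, yields polynomial objects invertible at $h=1$; this is precisely where the singularity trick enters and where the argument genuinely uses $f\in\Aut$ rather than merely $\End$. A secondary difficulty is that the augmented continuity statement has to be verified in the $\mathbb{Q}$-rational setting — the continuity argument of \cite{KGE} is topological and goes through, but one has to ensure that the neighbourhood of the identity in which continuity holds contains enough $\mathbb{Q}$-rational points to make the density/limit argument meaningful; this is automatic because $\TAut W_{n,\mathbb{Q}}$ is already dense in $\TAut W_{n,\mathbb{C}}$ in the augmentation topology, and tame sequences can be chosen $\mathbb{Q}$-rationally throughout.
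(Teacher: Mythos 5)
Your proposal is correct and follows essentially the same route as the paper: both arguments use the $\mathbb{Q}$-validity of the tame isomorphism, the strengthened (continuous) augmented tame approximation near the identity to force $\mathbb{Q}$-rationality of the limit and identify it with the augmented $\phi_{\mathbb{Q}}$, extension to the whole group by tame translation, and specialization $h\to 1$ via the singularity/rationality analysis, with canonicity inherited from the independence-of-infinite-prime result of \cite{K-BE2}. Your explicit framing of the descent step as a verification of Conjecture \ref{conj3} for $R=\mathbb{Q}$ is a useful clarification, but the underlying mechanism is the same as the paper's sketch.
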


Furthermore, an elementary modification of the above line of reasoning leads to the proof of the general case of Conjecture \ref{mainconj} -- i.e. to the reformulation of Main Theorem 2 with $\mathbb{Q}$ replaced by an arbitrary field of characteristic zero.

\subsection*{Acknowledgments}

We thank I. Arzhantsev, R. Karasev, I. Karzhemanov, D. Kazhdan, E. Rips, G. Sharygin, E. Vinberg, I. Zhdanovskii and A. Zheglov for numerous helpful remarks.

This project is supported by the Russian Science Foundation grant No. 17-11-01377.

\section{Automorphism groups and the homomorphism $\Phi$}

We begin by making a few precise definitions of the varieties involved and then proceed to explain the construction of the morphism $\Phi$.

\subsection{Basic definitions}
The Weyl algebra $W_{n,\mathbb{C}}$ is a free $\mathbb{C}$-module; once the choice of generators is specified, we fix the basis of the $\mathbb{C}$-module consisting of monomials of the form
$$
x_1^{k_1}\ldots x_n^{k_n}d_1^{l_1}\ldots d_n^{l_n}\equiv x^K d^L
$$
($K$ and $L$ are multi-indices) thus imposing an ordering of generators that requires all $x$'s to be written ahead of all $d$'s.
\smallskip

\begin{Def} \label{defdeg}
We set every generator $x$ and $d$ to have degree one, and if $f$ is an element of $W_n$, its degree $\Deg f$ is defined as that of its highest-degree monomial (this definition does not depend on the ordering we fix due to the form of the commutation relations).
\end{Def}

\smallskip
\begin{Def} \label{defheight}
If $f$ is an element of $W_n$, its height is defined to be the degree of its smallest-degree monomial, in the ordering we have fixed:
$$
\Ht f = \inf \lbrace \Deg x^K d^L \;:\; x^K d^L\; \text{is in $f$ with non-zero coefficient}\rbrace.
$$
\end{Def}
The height is obviously sensitive to the ordering of generators we work with.
The notions of degree and height are identical for the polynomial algebra $P_n$ with the added benefit of indifference toward the ordering.

\smallskip
\begin{Def} \label{defset}
If
$$
(f_1,\ldots, f_m)
$$
is a finite set of polynomials (in $P_n$ or in $W_n$), its degree is defined to be the largest value of $\Deg f_k$, while its height is the smallest value of $\Ht f_k,$, when $k=1,\ldots, m$.
\end{Def}
\smallskip

Any $\mathbb{C}$-endomorphism $\varphi$ of $W_n$ (or $P_n$) is identified with the set of images
$$
(\varphi(x_1),\ldots, \varphi(x_n), \varphi(d_1),\ldots, \varphi(d_n))
$$
of the algebra generators. The degree and the height of $\varphi$ are then defined as above.
\smallskip

The group $\Aut W_{n,\mathbb{C}}$ admits a filtration by subsets
$$
\Aut^{\leq N} W_{n,\mathbb{C}} = \lbrace \varphi\in\Aut W_{n,\mathbb{C}}\;:\;\Deg \varphi\leq N\rbrace.
$$
An identical definition holds for $\Aut P_{n,\mathbb{C}}$.
\smallskip

The sets $\Aut^{\leq N} W_{n,\mathbb{C}}$ are in fact affine algebraic sets. Indeed, any element $\varphi$ of $\Aut^{\leq N} W_{n,\mathbb{C}}$ is identified with a set of $2n$ polynomials as above. These in turn are identified with an array of their coefficients, which together serve as coordinates of a point in an affine space of sufficiently large dimension. The requirement for $\varphi$ to be an automorphism imposes constraints on these coordinates which obviously have the form of polynomial equations. The same is true for $\Aut^{\leq N} P_{n,\mathbb{C}}$.

The sets $\Aut^{\leq N} W_{n,\mathbb{C}}$ are connected by means of the obvious embeddings
$$
\Aut^{\leq N} W_{n,\mathbb{C}} \rightarrow \Aut^{\leq N+1} W_{n,\mathbb{C}}
$$
which are Zariski-closed; the colimit of the inductive system of these mappings is of course the entire group $\Aut W_{n,\mathbb{C}}$. The same holds for $\Aut P_{n,\mathbb{C}}$.

\smallskip

Finally, the power series topology is defined as follows. Let us for definiteness consider $P_n$ (whose generators we briefly refer to as $z_1,\ldots z_{2n}$), and let
$$
I=(z_1,\ldots, z_{2n})
$$
be the ideal spanned by all its generators (we call it the \textbf{augmentation ideal}).
\begin{Def} \label{defaug}
For any positive integer $N$, define the subgroups of $\Aut P_{n,\mathbb{C}}$:
$$
H_N = \lbrace \varphi\in \Aut P_{n,\mathbb{C}}\;:\; \varphi(z_i)\equiv z_i\;(\text{mod}\;I^N)\rbrace.
$$
\end{Def}
The elements of $H_N$ are automorphisms which are identity modulo terms of height at least $N$. This specifies a proper system of neighborhoods of the neutral element of $\Aut P_{n,\mathbb{C}}$ and therefore defines a topology, which we refer to as the \textbf{augmentation topology} or, alternatively, \textbf{power series topology} (due to its being effectively the power series topology of $P_n$ induced by $I^N$). An analogous definition is made for $W_n$.

\begin{Def} \label{defrank}
The rank of an endomorphism, $\Rk(\varphi)$, is defined as the height
$$
\Ht(\varphi - \Id)
$$
where the difference between endomorphisms is an endomorphism obtained by taking the difference between the images of the generators.
\end{Def}
The rank measures how close the endomorphism is to the identity morphism. If $\varphi$ has rank $N$, then $\varphi$ is identity modulo $I^N$ as defined above.

In the sequel, we will work exclusively with automorphisms of height at least one: indeed, it suffices to prove the version of the Main Theorem for the subgroups of automorphisms with zero free term, as translations will then allow to reconstruct the isomorphism for every point. This is needed for the sake of approximation in the power series topology which we will discuss in the next subsection.
\subsection{Tame automorphisms}
Suppose first that $\mathbb{K}[x_1,\ldots, x_n]$ is the polynomial algebra over a field $\mathbb{K}$, and let $\varphi$ be an automorphism of this algebra.
\begin{Def} \label{defelement}
We call $\varphi$ an elementary automorphism, if it is of the form
\begin{equation*}
\varphi = (x_1,\ldots,\;x_{k-1},\;ax_k+f(x_1,\ldots,x_{k-1},\;x_{k+1},\;\ldots,\;x_n),\;x_{k+1},\;\ldots,\;x_n)
\end{equation*}
with $a\in\mathbb{K}^{\times}$.
\end{Def}
Observe that linear invertible changes of variables -- that is, transformations of the form
\begin{equation*}
(x_1,\;\ldots,\;x_n)\mapsto (x_1,\;\ldots,\;x_n)A,\;\;A\in\GL(n,\mathbb{K})
\end{equation*}
are realized as compositions of elementary automorphisms.
\smallskip
\begin{Def} \label{deftame}
A tame automorphism is, by definition, an element of the subgroup $\TAut \mathbb{K}[x_1,\ldots,x_n]$ generated by all elementary automorphisms. Automorphisms that are not tame are called wild.
\end{Def}

All automorphisms of $\mathbb{K}[x,y]$ are tame \cite{Jung, VdK}; whether all automorphisms of $\mathbb{K}[x_1,\ldots,x_n]$ (with $n$ even) are tame is unknown, but for $n=3$ the celebrated Nagata's automorphism is an example of an automorphism which is not tame \cite{Shes2, Shes3}.

\medskip

Similarly, the group of tame symplectomorphisms $\TAut P_{n,\mathbb{K}}$ is defined as the subgroup of those tame automorphisms of $\mathbb{K}[x_1,\ldots,x_n,p_1,\ldots,p_n]$ that preserve the Poisson bracket. If $\varphi$ is an elementary automorphism of $\mathbb{K}[x_1,\ldots,x_{n},p_1,\ldots,p_n]$, then in order to preserve the symplectic structure, it clearly must be either a linear symplectic change of generators:
\begin{equation*}
(x_1,\;\ldots,\;x_n,\;p_1,\;\ldots,\;p_n)\mapsto (x_1,\;\ldots,\;x_n,\;p_1,\;\ldots,\;p_n)A
\end{equation*}
with $A\in\Sp(2n,\mathbb{K})$ a symplectic matrix, or an elementary transformation of one of two following types:
\begin{equation*}
(x_1,\;\ldots,\;x_{k-1},\;x_k+f(p_1,\;\ldots,\;p_n),\;x_{k+1},\;\ldots,\;x_n,\;p_1,\;\ldots,\;p_n)
\end{equation*}
and
\begin{equation*}
(x_1,\;\ldots,\;x_{n},\;p_1,\;\ldots,\;p_{k-1},\;p_k+g(x_1,\;\ldots,\;x_n),\;p_{k+1},\;\ldots,\;p_n).
\end{equation*}

The subgroup of tame symplectomorphisms $\TAut P_{n,\mathbb{K}}$ is the group generated by such elementary symplectomorphisms.
\smallskip

The definition of the group $\TAut W_{n,\mathbb{K}}$ of tame automorphisms of the Weyl algebra mirrors that of tame symplectomorphisms, with the commuting generators $p_i$ replaced by $d_i$. We also note that in all cases, we do not include in our definition of automorphism those mappings that allow the images $\varphi(x)$ to contain a non-zero free part (an element in the span of the unit). This omittance is evidently not significant to our present discussion. In particular, the polynomials that define the elementary automorphisms as above have zero free term.

\medskip

The notion of tame automorphisms provides an excellent tool for approximating arbitrary polynomial automorphisms. Classical results in this regard were established by Anick \cite{An}. More specifically, let $\varphi\in\Aut \mathbb{K}[x_1,\ldots,x_n]$ be an automorphism of the polynomial algebra.
\begin{Def} \label{defappr}
We say that $\varphi$ is approximated by tame automorphisms if there is a sequence
\begin{equation*}
\psi_1,\;\psi_2,\ldots,\;\psi_k,\ldots
\end{equation*}
of tame automorphisms such that
\begin{equation*}
\Ht((\psi_k^{-1}\circ\varphi)(x_i)-x_i)\geq k
\end{equation*}
for $1\leq i\leq n$ and all $k$ sufficiently large. In other words, an automorphism is approximated by a sequence of tame automorphisms if there is such a sequence that converges to this automorphism in power series topology.
\end{Def}
Observe that any tame automorphism $\psi$ is approximated by itself -- that is, by a stationary sequence $\psi_k=\psi$.

The following theorem is true.

\begin{thm} \label{thm-anick}
Let $\varphi=(\varphi(x_1),\;\ldots,\;\varphi(x_n))$ be an automorphism of the polynomial algebra $\mathbb{K}[x_1,\ldots,x_n]$ over a field $\mathbb{K}$ of characteristic zero, such that its Jacobian
\begin{equation*}
\J(\varphi)=\Det \left[\frac{\partial \varphi(x_i)}{\partial x_j}\right]
\end{equation*}
is equal to $1$. Then there exists a sequence $\lbrace \psi_k\rbrace\subset \TAut \mathbb{K}[x_1,\ldots,x_n]$ of tame automorphisms approximating $\varphi$.
\end{thm}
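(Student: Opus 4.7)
The plan is to build the approximating sequence $\{\psi_k\}$ inductively, using the Jacobian hypothesis to kill the lowest homogeneous correction term at each stage via a composition of elementary tame automorphisms. First I would reduce to the case $\Ht(\varphi(x_i)-x_i)\geq 2$: the linear part of $\varphi$ lies in $\SL(n,\mathbb{K})$ by the Jacobian hypothesis, and since $\SL(n,\mathbb{K})$ is generated by elementary matrices, composing $\varphi$ with a tame linear automorphism trivializes the linear part without affecting the Jacobian condition.

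Next I would proceed by induction on the rank. Assume a tame $\psi_k$ has already been produced with $\theta := \psi_k^{-1}\circ\varphi$ of the form $\theta(x_i)=x_i+g_i^{(k)}+r_i$, where $g_i^{(k)}$ is homogeneous of degree $k$ and $\Ht(r_i)\geq k+1$. Since $\J(\theta)=1$, expanding the determinant of $I+\partial g/\partial x$ and isolating the lowest (degree $k-1$) term yields the divergence-free condition
\begin{equation*}
\sum_{i=1}^{n}\frac{\partial g_i^{(k)}}{\partial x_i}=0.
\end{equation*}
The task then is to realize $(g_1^{(k)},\ldots,g_n^{(k)})$ as the leading infinitesimal contribution of a composition of elementary tame automorphisms; post-composing $\psi_k$ with that composition yields $\psi_{k+1}$ whose rank relative to $\varphi$ is at least $k+1$, and the limit in the power series topology is the required approximating sequence.

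The heart of the argument is the algebraic lemma that every divergence-free homogeneous polynomial vector field of degree $k$ decomposes as a finite sum of \emph{elementary} divergence-free fields, i.e.\ vector fields of the form $(0,\ldots,0,f,0,\ldots,0)$ with $f$ a homogeneous polynomial independent of the variable at the nonzero slot. I would prove this by induction on $n$: split $g_n$ into a piece $g_n''$ independent of $x_n$ (which is already elementary) and a piece with positive $x_n$-degree; use the divergence-free constraint to integrate the latter with respect to $x_n$ (this is where characteristic zero is essential, since the Euler operator on homogeneous polynomials of positive total degree must be invertible), thereby transferring the offending contribution into the first $n-1$ coordinates modulo elementary fields, and then recurse on $n$.

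The main obstacle is this decomposition lemma: it is the unique step that genuinely uses the Jacobian hypothesis and the characteristic zero assumption, and it must be executed with sufficient care that the elementary summands produced have exactly the prescribed homogeneous degree $k$ and do not reintroduce lower-order obstructions. Once the lemma is in place, the inductive bookkeeping and convergence in the power series topology are routine, and the stationarity of tame automorphisms (so that $\psi_k=\varphi$ eventually whenever $\varphi$ is itself tame) falls out of the construction automatically.
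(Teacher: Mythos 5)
The paper does not prove Theorem~\ref{thm-anick}; it cites Anick~\cite{An}, so there is no internal proof to compare against. Your overall framework --- normalize the linear part to the identity via generation of $\SL(n,\mathbb{K})$ by elementary matrices, then inductively kill the lowest homogeneous correction term, whose divergence vanishes by expanding $\J(\theta)=1$ --- is indeed the right skeleton, and it is the same inductive skeleton Anick uses. However, the lemma you identify as ``the heart of the argument'' is false, and the gap is fatal to the proof as written.

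Concretely, your claim is that every divergence-free homogeneous vector field of degree $k\geq 2$ is a \emph{vector-space sum} of elementary fields $(0,\ldots,0,f,0,\ldots,0)$ with $f$ independent of the variable in the nonzero slot. Take $n=2$, $k=2$, and the field
\begin{equation*}
(g_1,g_2)=\bigl(x_1x_2,\;-\tfrac{1}{2}x_2^2\bigr),\qquad \partial_1 g_1+\partial_2 g_2 = x_2 - x_2 = 0.
\end{equation*}
Any finite sum of elementary fields in two variables has the form $(F(x_2),\,G(x_1))$ with $F$ independent of $x_1$ and $G$ independent of $x_2$. Since $g_1=x_1x_2$ depends on $x_1$ (and $g_2$ on $x_2$), the field $(g_1,g_2)$ is not such a sum. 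More quantitatively: in degree $2$ over two variables, divergence-free fields form a $4$-dimensional space, while sums of elementary fields span only a $2$-dimensional subspace. The same deficiency is already visible at degree $1$, where the span of the off-diagonal elementary matrices $E_{ij}$, $i\neq j$, is a proper subspace of $\mathfrak{sl}_n$ --- you correctly sidestep that case by invoking the \emph{group-level} fact that $\SL(n,\mathbb{K})$ is generated by elementary matrices, but the degree-$\geq 2$ analogue of that group-level fact is exactly what is missing from your proposal, and the linear-algebra surrogate you substitute for it is false. Your sketch of the induction on $n$ (split $g_n$, integrate, ``transfer'' to the first $n-1$ slots) cannot repair this: the $x_n$-dependent part of $g_n$ can neither be absorbed by an elementary in the $n$-th slot (by definition of elementary) nor moved to the remaining slots by any additive rearrangement.

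What the argument actually needs is weaker and more delicate: one must realize $(g_1^{(k)},\ldots,g_n^{(k)})$ as the degree-$k$ homogeneous part of a \emph{composition} of elementary tame automorphisms, where the individual factors are not required to be close to the identity. If all factors have rank $\geq k$, the degree-$k$ part of the composition is again just the sum of their degree-$k$ parts and you are back to the false lemma; so the factors of lower rank (including linear shears) must genuinely enter, arranged so that their low-degree contributions cancel. The simplest mechanism is the group commutator: if $\tau_1$ has rank $a$ and $\tau_2$ has rank $b$, then $\tau_1^{-1}\tau_2^{-1}\tau_1\tau_2$ has rank $a+b-1$, and its leading homogeneous part is, up to sign, a Lie bracket of the two elementary vector fields --- which need not be elementary. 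For instance, commuting the degree-$2$ shear $x_1\mapsto x_1+\alpha x_2^2$ with the linear shear $x_2\mapsto x_2+\beta x_1$ produces, in degree $2$, a field whose first component contains the mixed monomial $x_1x_2$, which no sum of elementary degree-$2$ fields can produce. Making this precise --- showing that such commutators together with elementary fields span all divergence-free fields in each degree, and that the lower-order debris they generate can be cleaned up without disturbing what has already been achieved --- is exactly the nontrivial content of Anick's argument, and it is the step your proposal omits.
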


As it turns out, this theorem of Anick has an analogue in the symplectic setting, which we have proved recently. The theorem is a natural idea in the construction of the inverse homomorphism in Conjecture \ref{mainconj}, however approximation in the augmented setting is required to properly define the lifting.

\begin{thm}\label{thmkge}
Let $\sigma=(\sigma(x_1),\;\ldots,\;\sigma(x_n),\;\sigma(p_1),\;\ldots,\;\sigma(p_n))$ be a symplectomorphism of $\mathbb{K}[x_1,\ldots,x_n,p_1,\ldots,p_n]$ with unit Jacobian.
Then there exists a sequence $\lbrace \tau_k\rbrace\subset \TAut P_n(\mathbb{K})$ of tame symplectomorphisms approximating $\sigma$.
\end{thm}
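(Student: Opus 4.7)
The natural route is to adapt Anick's strategy from \cite{An} to the symplectic setting by induction on the \emph{rank} (as in Definition \ref{defrank}) of the approximation residue. Given $\sigma$ with unit Jacobian, I would construct the tame sequence $\lbrace\tau_k\rbrace$ recursively so that at each stage $\Ht(\tau_k^{-1}\circ\sigma - \Id) \geq k+1$. Passing from the $k$-th approximant to the $(k+1)$-st amounts to killing the lowest nontrivial homogeneous component of $\tau_k^{-1}\circ\sigma - \Id$ by composition with a suitable tame symplectomorphism that is itself close to the identity, after which power series convergence in the augmentation topology follows automatically.

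The base of the induction absorbs the linear part. The differential of $\sigma$ at the origin lies in $\Sp(2n,\mathbb{K})$, which is generated by linear symplectic elementary transvections (shears $p_i\mapsto p_i+\alpha x_j$, $x_i\mapsto x_i+\alpha p_j$, and diagonal scalings together with permutations), hence lies in $\TAut P_{n,\mathbb{K}}$. After composing with the inverse of this linear piece, one may assume the approximating symplectomorphism is tangent to the identity to order $\geq 2$.

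For the inductive step, write
\[
\sigma(z_i) = z_i + f_i + (\text{terms of height} \geq k+1),
\]
where $z_i$ runs over $x_1,\dots,x_n,p_1,\dots,p_n$ and $f_i$ is homogeneous of degree $k\geq 2$. The symplectic condition on $\sigma$, linearized at the identity, forces the tuple $(f_i)$ to be a Hamiltonian vector field: there exists a homogeneous polynomial $H$ of degree $k+1$ with $f_i = \lbrace H, z_i\rbrace$. Thus killing the leading obstruction is equivalent to realizing the time-one Hamiltonian flow of $H$, modulo terms of height $\geq k+1$, as a product of elementary symplectomorphisms. The heart of the argument is therefore a \emph{Hamiltonian decomposition lemma}: every homogeneous $H$ of degree $\geq 3$ can be written (working modulo Hamiltonians of strictly higher degree, where the Poisson bracket induces nilpotent corrections that are absorbed by the induction) as a finite sum
\[
H = \sum_\alpha H_\alpha,
\]
where, after a linear symplectic change of generators (itself tame), each $H_\alpha$ depends either only on $x$-variables or only on $p$-variables, so that $\exp(\lbrace H_\alpha,\cdot\rbrace)$ is a single elementary symplectomorphism of the two types displayed after Definition \ref{deftame}.

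The main obstacle is precisely this decomposition for \emph{mixed} monomials $x^A p^B$ with both $A,B\neq 0$. The standard Anick-style polynomial trick is not directly available because arbitrary elementary changes are not permitted — one is restricted to elementary \emph{symplectic} ones. The way around this is to use identities of the form
\[
\lbrace\,x_i^{a+1},\;x_j^{b}p_i\,\rbrace \;=\; -(a+1)\,x_i^a x_j^b
\]
and their iterates, which show that the Poisson-Lie algebra generated by purely $x$-type and purely $p$-type Hamiltonians acts transitively enough on monomials of a fixed degree to produce every mixed monomial as a linear combination of iterated Poisson brackets of elementary generators. Combined with the Baker--Campbell--Hausdorff expansion truncated to the relevant degree, each bracket translates into a commutator of elementary symplectomorphisms whose error is of strictly higher height than the monomial being produced, hence absorbed at the next stage of the induction. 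Once this lemma is in hand, the recursive construction of $\tau_k$ is routine and the resulting sequence converges to $\sigma$ in the power series topology, completing the proof.
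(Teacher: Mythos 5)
The paper does not reproduce the proof of Theorem~\ref{thmkge}: it cites \cite{KGE}, so there is no in-text argument to compare against. Evaluating your proposal on its own terms: the overall architecture is correct and is essentially the Anick-style strategy used in \cite{KGE} --- absorb the linear part into $\Sp(2n,\mathbb{K})\subset\TAut P_{n,\mathbb{K}}$, then peel off the lowest-degree obstruction $f_i=\lbrace H,z_i\rbrace$ (a Hamiltonian vector field, by the closedness of $\iota_V\omega$ on affine space) degree by degree, using the observation that for Hamiltonians of degree $d\geq 3$ the Poisson bracket raises degree (to $2d-2>d$), so BCH corrections are absorbed by the induction. All of this is sound.

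Where your argument has a real gap is the \emph{Hamiltonian decomposition lemma}, which you flag as ``the heart of the argument'' but do not actually establish. The specific identity you display, $\lbrace x_i^{a+1}, x_j^b p_i\rbrace = -(a+1)x_i^a x_j^b$, does not illustrate your claim: the second argument $x_j^b p_i$ is itself a mixed monomial (not a purely-$x$ or purely-$p$ Hamiltonian), and the output is a \emph{pure} $x$-monomial --- so this identity moves in the opposite direction from what you need. What you want is to reach arbitrary mixed $x^C p^D$ from iterated brackets of pure generators; this requires solving a linear system over terms of the form $\lbrace x^A, p^B\rbrace = \pm\sum_i A_iB_i\,x^{A-e_i}p^{B-e_i}$, and the surjectivity of that system is exactly the unproved content of your lemma. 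It is not a formality.

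The cleaner route, which avoids the Lie-algebra generation question entirely, is a Waring-type decomposition: over a field of characteristic zero, every homogeneous $H$ of degree $d$ is a $\mathbb{K}$-linear combination of $d$-th powers $\ell_\alpha^d$ of linear forms $\ell_\alpha$ in $x,p$. For each $\alpha$ there is a linear symplectic (hence tame) change of generators $L_\alpha$ taking $\ell_\alpha$ to a single coordinate, say $x_1$; the time-one Hamiltonian flow of $c_\alpha x_1^d$ is precisely an elementary symplectomorphism $p_1\mapsto p_1 - c_\alpha d\, x_1^{d-1}$, and conjugating by $L_\alpha$ stays tame. Multiplying these flows in any order reproduces $\exp(\lbrace H,\cdot\rbrace)$ modulo Hamiltonians of degree $\geq 2d-2 \geq d+1$, which is exactly what the inductive step requires. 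Substituting this lemma for your ``transitivity of iterated brackets'' claim closes the gap and gives a complete proof along the lines you sketched.
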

The proof is detailed in \cite{KGE}. The theorem states that the subgroup of tame symplectomorphisms is dense in augmentation topology in the group of all polynomial symplectomorphisms.

\subsection{Augmented and skew Weyl and Poisson algebras}

In the proof of our main result we make use of the augmented (deformed, or quantized) versions of $W_n$ and $P_n$, which we now define. In order to deform the Weyl algebra $W_n$, we introduce the augmentation parameter $h$ and modify the commutator between $d$ and $x$ by setting
$$
[d_i, x_j] = h \delta_{ij}.
$$
Alternatively, one can start with the free algebra $\mathbb{K}\langle a_1,\ldots, a_n, b_1,\ldots, b_n, c\rangle$ and take the quotient with respect to the following set of identities:
$$
a_ia_j-a_ja_i,\;\;b_ib_j-b_jb_i\;\;,
b_ia_j-a_jb_i-\delta_{ij}c,\;\;
a_ic-ca_i,\;\;
b_ic-cb_i.
$$
The quotient algebra, which we denote by $W^h_n$ (or $W^h_{n,\mathbb{K}}$ to indicate the base field) is the augmented Weyl algebra.

Similarly, we may distort the Poisson bracket of $P_n$:
\begin{equation*}
\lbrace p_i,x_j\rbrace = h\delta_{ij}
\end{equation*}
to reflect the augmentation of $W_n$ into $W^h_n$ in the classical counterpart. (Here we have renamed the generators $z_i$ into $x_i$ and $p_i$, according to their behavior with respect to the Poisson bracket. The notation is standard.) The resulting polynomial algebra will be denoted by $P^h_n$.
\medskip

The algebras $W^h_n$ and $P^h_n$ are connected by an analogue of the Kontsevich homomorphism (which is constructed for the non-augmented case below).

\smallskip

In order to create a situation in which the power series topology is well respected by the morphisms, we distort the augmentation further by introducing a pair of \textbf{skew augmented} algebras $W_{n,\mathbb{C}}^h[k_{ij}]$ and $P_{n,\mathbb{C}}^h[k_{ij}]$ (which correspond to the $h$-augmented Weyl and Poisson algebras, respectively).

These are defined as follows. Let the augmented Poisson generators be denoted by $\xi_i$ with $1\leq i\leq 2n$ and let $[k_{ij}]$ be an antisymmetric matrix of central variables. The algebra $P_{n,\mathbb{C}}^h[k_{ij}]$ is generated by $2n$ commuting variables $\xi_i$, the augmentation variable $h$ and the variables $[k_{ij}]$ (thus being the polynomial algebra in these variables); the Poisson bracket is defined on the generators $\xi_i$:
$$
\lbrace \xi_i,\xi_j\rbrace = hk_{ij}.
$$
The bracket of any element with $h$ or with any of the $k_{ij}$ is zero. The skew version of the algebra $W_{n,\mathbb{C}}$ is defined analogously.

It is for these auxiliary algebras that the analogue of the Kontsevich conjecture can be proved. Once that is done, the proof of the main results is finalized by the specialization argument. As we shall see, this final step will require the extension of the lifting map to points rational in $h$; we note, however, that tame approximation in the $h$-augmented and skew augmented cases is polynomial in $h$.

\smallskip

From the standpoint of our context, much of the theory of the Weyl and Poisson algebras remains unchanged by the augmentation as well as by skew augmentation: the construction of the homomorphism $\Phi$ is identical, the proof of the fact that $\Phi$ is a morphism of the normalized varieties is essentially the same, and the tame augmented symplectomorphism approximation for the $h$-augmented algebra is established in a way almost identical to the one presented in \cite{KGE} -- the $h$-augmented version of Theorem \ref{thmkge} is valid. What does change is the behavior of $\Phi$  with respect to the approximation in the skew case, as the deformed version of $\Phi$ will be continuous in the power series topology. This is the main point of the augmentation.

\subsection{Approximation and the singularity trick}

The proof of Propositions \ref{skewthetaprop} and utilizes a certain "singularity trick". Essentially it is a technique that, by examining certain curves in $\Aut$, allows one to efficiently control the height of the higher-degree terms in an automorphism and its image under $\Phi$ which is near the identity automorphism. In the main text, the two statements that comprise this technique are Lemma \ref{lem1} and Proposition \ref{singtrick}.

\smallskip

The idea of Poisson (and Weyl) structure augmentation, which enables the proof of the Main Theorem, and the singularity trick complement each other and {\bf constitute the cornerstone of our approach to the polynomial symplectomorphism quantization (lifting) problem} by way of enabling a stronger form of tame approximation. This advantage is offset by the need to specialize the deformation parameters (Planck constant) in order to return to the non-augmented case, as well as by the necessity to introduce the general form of commutation relations (which we call \textbf{skew} augmented algebra structure, owing to its antisymmetry) however the specialization can be proven correct, as in the last part of the proof of the Main Theorem.

\smallskip

The singularity trick is rather useful when dealing with direct systems of varieties equipped with the power series topology. It was first introduced in our prior paper \cite{KBYu} (Theorem 3.2, Lemma 3.5, 3.6 and 3.7). In our proof, this technique is employed in an essentially the same manner, and is contained in Lemma \ref{lem1} and Proposition \ref{singtrick}.

\smallskip

For the archetypal case of the automorphism group of the commutative polynomial algebra $\mathbb{K}[x_1,\ldots, x_n]$, the situation is as follows.

Let $L=L(t)$ be a curve of linear automorphisms, i.e. a curve
$$L \subset \Aut(K[x_1,\dots,x_n]),$$ whose points are linear substitutions. Suppose that, as $t$ tends to zero, the $i$-th eigenvalue of the matrix $L(t)$ (corresponding to the linear changes of variables) also tends to zero as $t^{k_i}$, $k_i\in\mathbb{N}$. Such a family always exists.

Suppose now that the degrees $\lbrace k_i,\;i=1,\ldots n\rbrace$ of singularity of eigenvalues at zero are such that for every pair $(i,j)$, if $k_i\neq k_j$, then there exists a positive integer $m$ such that
$$
\text{either\;\;} k_im\leq k_j\;\;\text{or\;\;}k_jm\leq k_i.
$$

The largest such $m$ we will call the \textbf{order} of $L(t)$ at $t=0$. As $k_i$ are all set to be positive integer, the order equals the integer part of $\frac{k_{\text{max}}}{k_{\text{min}}}$.

Let $M\in \Aut_0(K[x_1,\dots,x_n])$ be a polynomial automorphism.
\begin{lem}    \label{Lm2} The curve $L(t)ML(t)^{-1}$ has no singularity at zero for any $L(t)$ of order $\leq N$ if and only if $M\in \hat{H}_N$, where $\hat{H}_N$ is the subgroup of automorphisms which are homothety modulo the $N$-th power of the
augmentation ideal $(x_1,\ldots, x_n)$.
\end{lem}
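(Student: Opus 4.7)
The plan is to reduce to a simultaneously diagonal $L(t)$ and then compare $t$-exponents monomial by monomial. First I would observe that, after a $t$-independent linear change of generators (harmless for the statement, since we may take the basis that diagonalizes $L(t)$), we may assume $L(t)(x_i) = t^{k_i} x_i$ with $k_i$ positive integers. Expanding $M(x_i) = \sum_\alpha c_\alpha^{(i)} x^\alpha$ in multi-index notation, a direct calculation then yields
$$
(L(t)\,M\,L(t)^{-1})(x_i) \;=\; \sum_\alpha c_\alpha^{(i)}\, t^{\langle k,\alpha\rangle-k_i}\, x^\alpha, \qquad \langle k,\alpha\rangle := \textstyle\sum_j k_j\alpha_j.
$$
Thus the conjugated curve is regular at $t=0$ if and only if $\langle k,\alpha\rangle \geq k_i$ for every index $i$ and every $\alpha$ with $c_\alpha^{(i)}\neq 0$.

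For the sufficiency direction, given $M\in\hat{H}_N$ I would write $M(x_i)=\lambda_i x_i+r_i$ with $r_i$ a sum of monomials of degree at least $N$. The diagonal linear term contributes exponent $0$, and for a monomial $x^\alpha$ appearing in $r_i$ one has
$
\langle k,\alpha\rangle - k_i \;\geq\; |\alpha|\,k_{\min} - k_{\max} \;\geq\; N k_{\min} - k_{\max},
$
which is non-negative under the hypothesis that the order $\lfloor k_{\max}/k_{\min}\rfloor$ of $L(t)$ is at most $N$ (after normalizing $\gcd(k_i)=1$, i.e.\ $k_{\min}=1$, so that order equals $k_{\max}$).

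For the necessity direction, I would argue the contrapositive and exhibit a witnessing $L(t)$. If $M \notin \hat{H}_N$, then one of the following must hold: either (a) some $M(x_i)$ contains an off-diagonal linear term $c_{e_m}^{(i)} x_m$ with $m\neq i$ and $c_{e_m}^{(i)}\neq 0$, or (b) some $M(x_i)$ carries a non-zero monomial $x^\alpha$ of degree strictly between $1$ and $N$. In case (a), take $L(t)$ with $k_m=1$ and $k_i=2$ (the remaining $k_j$ anywhere in $[1,N]$); the order is at most $N$ and the $t$-exponent of the offending monomial is $k_m - k_i = -1$. In case (b), set $k_j=1$ for every $j$ in the support of $\alpha$ and $k_i=N$ (remaining $k_j$ in $[1,N]$); again the order is $\lfloor N/1\rfloor = N$, and the $t$-exponent of $x^\alpha$ is $|\alpha|-N<0$. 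In both situations the conjugated curve genuinely has a pole at $t=0$, so no such $M$ can satisfy the singularity-free hypothesis.

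The principal obstacle, as I see it, is the initial coordinate reduction and the precise bookkeeping needed to align the sufficient-condition bound with the formulation of $\hat{H}_N$. One must verify that an arbitrary curve of linear automorphisms with the stated eigenvalue behavior can be brought to diagonal form by a $t$-independent basis change, that this reduction is compatible with the generator-dependent definition of $\hat{H}_N$, and that the normalization $k_{\min}=1$ is harmless (via reparametrization of $t$). Once these points are handled, everything reduces to the weighted-degree exponent comparison sketched above.
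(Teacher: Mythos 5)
Your strategy --- diagonalize $L(t)$ and track the weighted $t$-exponent $\langle k,\alpha\rangle-k_i$ monomial by monomial --- is exactly what the paper does for the skew-augmented analogue, Proposition \ref{singtrick}; the paper does not reprove Lemma \ref{Lm2} itself but cites \cite{KBYu}. The sufficiency direction is morally right, though note that $\gcd(k_i)=1$ does not imply $k_{\min}=1$ (e.g.\ $(2,3)$); what makes the normalization harmless is that rescaling $t\mapsto t^{1/k_{\min}}$ preserves the sign of every exponent $\langle k,\alpha\rangle-k_i$, after which order $=k_{\max}\leq N$ gives $Nk_{\min}-k_{\max}=N-k_{\max}\geq 0$.

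The necessity direction, however, has a genuine gap. In your case (b), the prescription ``$k_j=1$ for every $j$ in the support of $\alpha$, and $k_i=N$'' is contradictory when $\alpha_i\geq 1$; and in fact no diagonal $L(t)$ whatsoever can produce a pole from a monomial $x^\alpha$ of $M(x_i)$ with $\alpha_i\geq 1$, since then
$$
\langle k,\alpha\rangle-k_i\;\geq\;(\alpha_i-1)k_i+\sum_{j\neq i}k_j\alpha_j\;\geq\;0.
$$
Such monomials do occur for polynomial automorphisms: composing two shears in different directions (for instance $x_1\mapsto x_1+x_2x_3$ followed by $x_2\mapsto x_2+x_1$) already puts an $x_1$-divisible monomial into the image of $x_1$. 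The paper's proof of Proposition \ref{singtrick} is organized precisely around this difficulty: it first treats the case where the offending monomial in $\varphi(\xi_1)$ is not divisible by $\xi_1$ or by any $k_{1j}$, and then reduces the general case by pre-conjugating with a $t$-independent transformation of the form $\xi_1\mapsto\xi_1+\lambda u+\delta v$ (together with the induced action on the $k_{ij}$), using the extra stable generators $u,v$ to clear the offending monomial of $\xi_1$. In the plain commutative setting of Lemma \ref{Lm2} one needs a corresponding $t$-independent linear pre-conjugation in the $x$-variables together with a verification that it removes the divisibility by $x_i$, and your argument does not supply this step. Until it is filled in, the necessity direction is incomplete.
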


The proof can be found in \cite{KBYu}. Also, a direct analogue of this lemma is Proposition \ref{singtrick}, with proof borrowing its essential features from \cite{KBYu}.

\begin{remark}\label{weakmotivation}
The emergence of the singularity trick (together with the general consideration of singularities of $\Ind$-schemes) may be viewed as a reflection of the infinite-dimensional nature of the problem. Indeed, in the context of finite-dimensional algebraic groups, the most natural approach to a problem such as Conjecture \ref{mainconj} would be the construction of a morphism which induces an isomorphism of the Lie algebras. However, as was pointed out in \cite{BKK1} (specifically in Section 3 and also in Remark 2 of \cite{BKK1}), the naive infinite-dimensional translation of this approach is unsatisfactory. In fact, the Lie algebras (defined as the algebras of derivations) of $W_{n,\mathbb{Q}}$ and $P_{n,\mathbb{Q}}$ are not isomorphic to each other. Therefore, the positivity of Conjecture \ref{mainconj} leads to the breakdown of the Lie algebra isomorphism approach typical of finite-dimensional cases. This pathological infinite-dimensional effect could be the result of the $\Ind$-schemes being singular at every point \cite{BKK1}.

\end{remark}

We now turn to the definition of the candidate homomorphism for Conjecture \ref{mainconj}.

\subsection{Homomorphism $\Phi$}

As mentioned in the introduction, the homomorphism
\begin{equation*}
\Phi: \Aut W_{n,\mathbb{C}}\rightarrow \Aut P_{n,\mathbb{C}}
\end{equation*}
is constructed by means of a certain ultraproduct decomposition. The few items needed are hereby presented. The detailed discussion and proofs can be found in \cite{Tsu2} or in \cite{K-BE2}.

\medskip

Let $\mathcal{U}\subset 2^{\mathbb{N}}$ be a fixed non-principal ultrafilter on the set of positive integers (which is the index set for the sequences and the product defined below). The ultrafilter $\mathcal{U}$ induces an equivalence relation on the set of all countable sequences of prime numbers: if
$$
(p_m) = \lbrace p_1,\ldots, p_m,\ldots\rbrace\;\;\text{and}\;\;(q_m)
$$
are two sequences, they are equivalent modulo $\mathcal{U}$ if the set
$$
\lbrace m\;:\;p_m=q_m\rbrace
$$
is in $\mathcal{U}$. Since $\mathcal{U}$ is non-principal, the equivalence $\sim_{\mathcal{U}}$ partitions the set of all prime number sequences into equivalence classes which will be of two types: classes that contain a (necessarily unique) stationary sequence and classes whose every representative is unbounded.\footnote{This statement is an easy consequence of the ultrafilter properties.} The classes $[p]$ of the second kind are referred to as infinite primes. The name is in agreement with the notion of prime element in the ring $^*\mathbb{Z}$ of hyperintegers, which can be obtained as the quotient of the direct product of a countable set of copies of $\mathbb{Z}$ modulo the minimal prime ideal
$$
(\mathcal{U}) = \lbrace (a_m)\in \prod_{m\in\mathbb{N}} \mathbb{Z}\;:\;\text{the set}\;\lbrace m\;:\;a_m=0\rbrace\;\text{is in}\;\mathcal{U}\rbrace.
$$
The fact that $(\mathcal{U})$ is a minimal prime ideal is true as long the components in the direct product are integral domains. If all the components are fields, then $(\mathcal{U})$ is also maximal, since the product of fields is always von Neumann regular. Note also that taking the quotient by $(\mathcal{U})$ is the same as partitioning into classes modulo $\mathcal{U}$ in the sense of the equivalence relation defined above.

\smallskip

Let $(p_m)$ be a prime number sequence which defines an infinite prime $[p]$ with respect to the fixed ultrafilter $\mathcal{U}$. Consider, for each $p_m$ in the sequence, the algebraically closed field $\mathbb{F}_{p_m}$ of characteristic $p_m$. The following lemma is true.

\begin{lem} \label{redlemma1}
The ultraproduct
$$
\left(\prod_{m\in\mathbb{N}}\mathbb{F}_{p_m}\right)\;/\;\mathcal{U}
$$
has cardinality of the continuum and is an algebraically closed field of characteristic zero.
\end{lem}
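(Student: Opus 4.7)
The plan is to verify the three assertions of the lemma --- field structure, characteristic zero, and algebraic closedness, plus the cardinality claim --- essentially independently. The first three follow uniformly from \L{}o\'s's theorem on ultraproducts applied to suitable first-order formulas, while the cardinality statement is a classical computation for ultrapowers modulo a non-principal ultrafilter on $\mathbb{N}$.

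First I would handle the field axioms and the characteristic. Each $\mathbb{F}_{p_m}$ is a field, and the field axioms are first-order sentences satisfied for every index $m \in \mathbb{N} \in \mathcal{U}$, so \L{}o\'s's theorem gives that $K := \bigl(\prod_m \mathbb{F}_{p_m}\bigr)/\mathcal{U}$ is a field. For the characteristic, given any positive integer $n$, the hypothesis that $[p]$ is an infinite prime means by definition that $(p_m)$ is not $\mathcal{U}$-equivalent to any stationary sequence; this forces the set $\{m : p_m > n\}$ to be in $\mathcal{U}$. On this set $n \cdot 1 \neq 0$ in $\mathbb{F}_{p_m}$, hence $n \cdot 1 \neq 0$ in $K$, and $\Char K = 0$.

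Algebraic closedness is likewise first-order: applying \L{}o\'s to the scheme of axioms
$$
\forall a_0,\ldots,a_{d-1}\;\exists z\,\bigl(z^d + a_{d-1} z^{d-1} + \cdots + a_0 = 0\bigr), \qquad d \geq 1,
$$
which holds in every $\mathbb{F}_{p_m}$, shows that the same scheme holds in $K$. Thus every monic non-constant polynomial in $K[z]$ has a root, so $K$ is algebraically closed.

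The cardinality claim is the least mechanical step and will be the main obstacle. The upper bound $|K| \leq 2^{\aleph_0}$ is immediate: each $\mathbb{F}_{p_m}$ is countable (being the algebraic closure of a finite prime field), so $\bigl|\prod_m \mathbb{F}_{p_m}\bigr| \leq \aleph_0^{\aleph_0} = 2^{\aleph_0}$, and passing to the quotient only shrinks the cardinality. For the lower bound I would fix an injection $\mathbb{N} \hookrightarrow \mathbb{F}_{p_m}$ in each factor and push it through to an injection $\mathbb{N}^{\mathbb{N}}/\mathcal{U} \hookrightarrow K$, reducing to the classical fact that the ultrapower $\mathbb{N}^{\mathbb{N}}/\mathcal{U}$ has cardinality continuum whenever $\mathcal{U}$ is a non-principal ultrafilter on $\mathbb{N}$. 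This last fact is itself proved by exhibiting an almost disjoint family of size $2^{\aleph_0}$ of subsets of $\mathbb{N}$ and noting that their indicator functions are pairwise $\mathcal{U}$-inequivalent, since for any two distinct members of such a family their symmetric difference is cofinite in each and so cannot be avoided by any ultrafilter set. Combining both bounds gives $|K| = 2^{\aleph_0}$ and completes the proof.
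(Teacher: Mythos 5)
Your treatment of the field axioms, characteristic zero, and algebraic closedness via \L{}o\'s's theorem is correct and is exactly the standard route (the paper dismisses these points as ``straightforward''). The paper itself does not include a proof of the cardinality claim, deferring instead to the references \cite{Tsu2} and \cite{K-BE2} and noting this is the less straightforward part --- and that is precisely where your argument breaks down.

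The lower bound argument you propose is incorrect. You want to show that the indicator functions $\chi_A$, as $A$ runs over an almost disjoint family, give $2^{\aleph_0}$ pairwise $\mathcal{U}$-inequivalent elements. But in fact $\{0,1\}^{\mathbb{N}}/\mathcal{U}$ has only \emph{two} elements, for any ultrafilter $\mathcal{U}$: this follows instantly from \L{}o\'s applied to the first-order sentence ``there exist exactly two elements,'' and can be seen directly --- $\chi_A \sim_{\mathcal{U}} \chi_B$ iff $(A\cap B)\cup(A^c\cap B^c)\in\mathcal{U}$, and since $\mathcal{U}$ is an ultrafilter, either both $A,B\in\mathcal{U}$ (so $A\cap B\in\mathcal{U}$) or both $A,B\notin\mathcal{U}$ (so $A^c\cap B^c\in\mathcal{U}$) or they split between the two classes; hence $[\chi_A]$ is completely determined by whether $A\in\mathcal{U}$. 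In an almost disjoint family at most one member lies in $\mathcal{U}$ (two would have a finite, hence non-$\mathcal{U}$, intersection), so all but at most one of your indicator functions are $\mathcal{U}$-equivalent to the zero function. Your ``their symmetric difference is cofinite in each and so cannot be avoided by any ultrafilter set'' confuses being cofinite relative to $A\cup B$ with being cofinite in $\mathbb{N}$; the agreement set contains $A^c\cap B^c$, which is typically in $\mathcal{U}$.

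To repair this you need representative functions whose ranges grow, not $\{0,1\}$-valued ones. A standard fix: for each $A\subseteq\mathbb{N}$ define $f_A(n)=\sum_{k<n,\,k\in A}2^k$, so that $f_A(n)$ encodes $A\cap[0,n)$. If $A\neq B$ and $k\in A\triangle B$, then $f_A(n)\neq f_B(n)$ for all $n>k$, so the agreement set is finite and hence not in the non-principal ultrafilter $\mathcal{U}$; thus the classes $[f_A]$ are pairwise distinct, giving $2^{\aleph_0}$ elements in $\mathbb{N}^{\mathbb{N}}/\mathcal{U}$. Composing with your (correct) injections $\mathbb{N}\hookrightarrow\mathbb{F}_{p_m}$ and your (correct) observation that a family of pointwise injections descends to an injection of ultraproducts then yields the lower bound. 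The upper bound as you stated it is fine.
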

The algebraic closedness and characteristic zero are straightforward. The cardinality is a little less straightforward, the argument may be found in \cite{Tsu2} or in \cite{K-BE2}.
As a corollary of the well-known Steinitz's theorem, we then have
\begin{lem} \label{redlemma2}
$$
\left(\prod_{m\in\mathbb{N}}\mathbb{F}_{p_m}\right)\;/\;\mathcal{U}\simeq \mathbb{C}.
$$
\end{lem}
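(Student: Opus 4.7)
The plan is to deduce the statement directly from Lemma \ref{redlemma1} together with the classical result of Steinitz on the classification of algebraically closed fields. Recall that Steinitz's theorem asserts that an algebraically closed field is determined, up to isomorphism, by its characteristic and its transcendence degree over its prime subfield. Accordingly, the entire argument reduces to matching these two invariants between
$$
K \;=\; \left(\prod_{m\in\mathbb{N}}\mathbb{F}_{p_m}\right)\big/\mathcal{U} \qquad \text{and}\qquad \mathbb{C}.
$$

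First, by Lemma \ref{redlemma1}, the field $K$ is algebraically closed of characteristic zero and has cardinality $2^{\aleph_0}$. Obviously $\mathbb{C}$ enjoys the same three properties, so the characteristics agree. The only nontrivial point is therefore the computation of the transcendence degrees of $K$ and $\mathbb{C}$ over their common prime subfield $\mathbb{Q}$. For this I would invoke the standard cardinal arithmetic argument: if $F$ is an algebraically closed field of cardinality $\kappa > \aleph_0$, and $T \subset F$ is a transcendence basis over $\mathbb{Q}$, then $F$ is the algebraic closure of $\mathbb{Q}(T)$, and hence
$$
\kappa \;=\; |F| \;=\; \max\bigl(\aleph_0,\,|T|\bigr) \;=\; |T|,
$$
since the algebraic closure of a field of infinite cardinality $\lambda$ again has cardinality $\lambda$. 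Applying this with $\kappa = 2^{\aleph_0}$ to both $K$ and $\mathbb{C}$ yields that both have transcendence degree $2^{\aleph_0}$ over $\mathbb{Q}$.

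With characteristic and transcendence degree matched, Steinitz's theorem gives an isomorphism $K \simeq \mathbb{C}$, completing the proof. The main nontrivial input is Lemma \ref{redlemma1} (in particular the continuum cardinality of the ultraproduct, which was already noted to be the delicate point and referenced to \cite{Tsu2} and \cite{K-BE2}); once Lemma \ref{redlemma1} is granted, the present statement is essentially a one-line invocation of Steinitz, with no additional obstacle. I would note explicitly that the resulting isomorphism is highly non-canonical, depending on a choice of transcendence basis, but this is immaterial for the constructions that follow in the paper.
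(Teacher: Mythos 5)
Your proof is correct and matches the paper's approach exactly: the paper also deduces the statement as an immediate corollary of Lemma~\ref{redlemma1} together with Steinitz's theorem. You simply spell out the transcendence-degree bookkeeping that the paper leaves implicit, which is fine.
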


This lemma constitutes the reduction of the ground field modulo infinite prime: the scalars are represented as modulo $\mathcal{U}$ classes of sequences $(a_m)$, with (most of) the elements $a_m$ being algebraic over $\mathbb{Z}_{p_m}$.

\smallskip

The next item concerns the properties of the Weyl algebra $W_{n,\mathbb{K}}$ in positive characteristic. We have already mentioned that if $\Char\mathbb{K}=p>0$, then the center
$$
C(W_{n,\mathbb{K}})\simeq \mathbb{K}[z_1,\ldots, z_{2n}],
$$
where $z_i$ are $p$-th powers of the generators of $W_n$ (the proof is an easy exercise). Next, the following important property holds.
\begin{lem} \label{redlemma3}
If $\varphi$ is an endomorphism of $W_{n,\mathbb{K}}$ (with $\Char\mathbb{K}=p$), then
$$
\varphi (C(W_{n,\mathbb{K}}))\subset C(W_{n,\mathbb{K}})
$$
-- that is, the endomorphism $\varphi$ induces an endomorphism of the center.
\end{lem}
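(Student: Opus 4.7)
The plan is to reduce the claim to a $p$-th power computation using the explicit description of the center in positive characteristic. Since $C(W_{n,\mathbb{K}})=\mathbb{K}[x_1^p,\ldots,x_n^p,d_1^p,\ldots,d_n^p]$ and $\varphi$ is a multiplicative $\mathbb{K}$-linear map (so $\varphi(a^p)=\varphi(a)^p$), the lemma reduces to showing that the $2n$ elements $\varphi(x_i)^p$ and $\varphi(d_j)^p$ are central in $W_{n,\mathbb{K}}$.

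The main technical tool I would invoke is the restricted Lie algebra identity $\ad(y^p)=(\ad y)^p$, valid for any element $y$ in an associative $\mathbb{F}_p$-algebra and a direct consequence of Jacobson's formula for $(a+b)^p$ in characteristic $p$. Combined with the classical fact (Hochschild) that in characteristic $p$ the $p$-th iterate of any derivation is again a derivation, this further reduces the problem to verifying that the derivations $(\ad\varphi(x_i))^p$ and $(\ad\varphi(d_j))^p$ vanish on the algebra generators $x_k$ and $d_l$ of $W_{n,\mathbb{K}}$.

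To carry out this check, I would exploit the Weyl relations satisfied by the images $e_i=\varphi(x_i)$ and $f_j=\varphi(d_j)$, namely $[e_i,e_j]=[f_i,f_j]=0$ and $[f_j,e_i]=\delta_{ij}$. Writing $\varphi(x_i)$ in PBW normal form as a polynomial in the $x_k,d_l$, the operator $\ad\varphi(x_i)$ acts as a formal $\mathbb{K}$-linear combination of partial derivatives with respect to the $d_l$, and an induction on the order of nesting, using the Leibniz rule and the divisibility $\binom{p}{k}\equiv 0\pmod{p}$ for $0<k<p$, should show that the $p$-fold iterated commutator vanishes on the generators. An alternative route is to appeal to the Azumaya structure of $W_{n,\mathbb{K}}$ over $C$: since $W_n$ is a finitely generated projective $C$-module, an inner derivation vanishing on a $C$-module generating set vanishes identically, reducing the verification to a tractable finite set.

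The hard part is precisely this last step --- verifying that $(\ad\varphi(x_i))^p$ annihilates the original generators $x_k,d_l$, rather than the $\varphi$-images $e_k,f_l$ (where the vanishing is automatic from the commutation relations, but does not suffice unless $\varphi$ is surjective). Combinatorial expansions of $p$-fold nested commutators of arbitrary elements of $W_n$ tend to produce a profusion of nontrivial terms, and one needs the specific shape of $\varphi(x_i)$ to force their cancellation modulo $p$. The details of this step form the substance of the arguments found in \cite{Tsu2} and \cite{K-BE2}.
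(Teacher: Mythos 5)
The paper itself does not prove this lemma: it simply refers to \cite{Tsu1} (Lemma 4) for "an elegant proof". Your proposal is therefore not competing against an in-paper argument, but it is worth assessing on its own merits.

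Your reduction is correct and essentially the standard one: since $C(W_{n,\mathbb{K}})=\mathbb{K}[x_1^p,\ldots,d_n^p]$ and $\varphi$ is a homomorphism, it suffices to show $\varphi(x_i)^p$, $\varphi(d_j)^p$ are central, and by Jacobson's formula this is equivalent to $(\ad\varphi(x_i))^p = 0$ as an operator on $W_{n,\mathbb{K}}$. You also correctly flag the crux: the commutation relations only give that $(\ad\varphi(x_i))^p$ kills the $\varphi$-images $\varphi(x_k),\varphi(d_l)$, hence the image subalgebra $\varphi(W_n)$; for non-surjective $\varphi$ this is not the same as killing $x_k,d_l$.

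The gap is precisely in the step you label "the hard part", and the route you sketch there would not close it. A "PBW normal form plus $\binom{p}{k}\equiv 0$" expansion of $(\ad\varphi(x_i))^p(x_k)$ is not a formal triviality: for an arbitrary element $a\in W_n$ one does \emph{not} have $(\ad a)^p=0$ (e.g. $(\ad(xd))^p(x)=x\neq 0$), so the vanishing genuinely uses the Weyl relations among the images $\varphi(x_i),\varphi(d_j)$ together with some global structure of $W_n$ over its center. Your alternative Azumaya remark is closer to the truth but is also left dangling --- the statement "an inner derivation vanishing on a $C$-module generating set vanishes identically" is only useful once one knows $\varphi(W_n)$ generates $W_n$ as a $C$-module, which is not established (and is not obvious, since $\varphi$ need not be surjective; one must first argue, e.g. via $W_n$ being a domain and Azumaya over $C$, that $\varphi$ is injective and controls the image). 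This is exactly the content of Tsuchimoto's argument, and it is not reproduced in your sketch. You also cite \cite{Tsu2} and \cite{K-BE2} for the missing step, whereas the lemma is Lemma 4 of \cite{Tsu1}; the citation is off. In short: the strategy and the identification of where the difficulty lies are correct, but no self-contained proof of the crucial vanishing is given, and the proposed combinatorial route would fail without the additional structural input that Tsuchimoto's proof supplies.
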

An elegant proof can be found in \cite{Tsu1} (Lemma 4).

\smallskip

Next we define the Poisson bracket on the center of $W_{n,\mathbb{K}}$. Set for definiteness $\mathbb{K}=\mathbb{F}_{p}$. Then the center is given by
$$
\mathbb{F}_{p}[x_1^p,\ldots, x_n^p,d_1^p,\ldots, d_n^p]
$$
($x_i$ and $d_j$ are Weyl algebra generators) and therefore contains as a subalgebra the algebra
$$
\mathbb{Z}_{p}[x_1^p,\ldots, x_n^p,d_1^p,\ldots, d_n^p]
$$
of polynomials over $\mathbb{Z}_{p}$. Now, for any two elements $a,b\in \mathbb{Z}_{p}[x_1^p,\ldots, x_n^p,d_1^p,\ldots, d_n^p]$, define
\begin{equation*}
\lbrace a,b\rbrace=-\rho\left(\frac{[a_0,b_0]}{p}\right).
\end{equation*}
Here
$$
\rho:W_{n,\mathbb{Z}}\rightarrow W_{n,\mathbb{Z}_p}
$$
is the (usual) modulo $p$ reduction of the Weyl algebra over $\mathbb{Z}$, and $a_0$ and $b_0$ are elements of $W_{n,\mathbb{Z}}$ in the pre-images $\rho^{-1}(a_0)$ and $\rho^{-1}(b_0)$, respectively. It can be checked that the bracket is well defined, and the bracket admits a straightforward extension to the entire center. It is also bilinear, takes values in the center, and satisfies the required Leibnitz and Jacobi identities. Finally, it is standard in the sense that
\begin{equation*}
\lbrace d_i^p,x_j^p\rbrace=\delta_{ij}.
\end{equation*}

As the Poisson bracket on the center is induced by the Weyl algebra commutator, the following lemma holds.
\begin{lem} \label{redlemma4}
If $\varphi$ is an endomorphism of $W_{n,\mathbb{F}_p}$, then the induced endomorphism $\varphi^c$ of the center $C(W_{n,\mathbb{F}_p})$ preserves the Poisson bracket.
\end{lem}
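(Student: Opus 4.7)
The plan is to work one degree up from $W_{n,\mathbb{F}_p}$, in the Weyl algebra $\widetilde{W} := W_{n,\mathbb{Z}/p^2\mathbb{Z}}$, where the Poisson bracket can be read off directly from the commutator. Writing $\pi\colon \widetilde{W}\to W_{n,\mathbb{F}_p}$ for the reduction modulo $p$, its kernel $p\widetilde{W}$ is canonically identified with $W_{n,\mathbb{F}_p}$ as an abelian group via multiplication by $p$. For central $a,b\in C(W_{n,\mathbb{F}_p})$ and any lifts $\tilde a,\tilde b\in\widetilde{W}$, the commutator $[\tilde a,\tilde b]$ lies in $p\widetilde{W}$, and under the identification above it corresponds to $-\{a,b\}\in W_{n,\mathbb{F}_p}$. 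A routine check using the centrality of $a,b$ shows this is independent of the chosen lifts.

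First I would lift $\varphi$ to a ring endomorphism $\tilde\varphi\colon\widetilde{W}\to\widetilde{W}$ reducing to $\varphi$ modulo $p$. Pick arbitrary lifts $\tilde P_i,\tilde Q_j\in\widetilde{W}$ of $\varphi(x_i),\varphi(d_j)$ (lifting the coefficients of the polynomial expressions). The Weyl relation holds a priori only modulo $p$, so
$$
[\tilde Q_j,\tilde P_i] \;=\; \delta_{ij} + p E_{ji}
$$
for certain $E_{ji}\in W_{n,\mathbb{F}_p}$. The collection $\{E_{ji}\}$ is a Hochschild $2$-cocycle, and I would show it can be killed by adjusting the lifts via $\tilde P_i\mapsto\tilde P_i+p\alpha_i$, $\tilde Q_j\mapsto\tilde Q_j+p\beta_j$. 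The obstruction reduces to an inner-derivation system in $W_{n,\mathbb{F}_p}$, which admits a solution because $W_{n,\mathbb{F}_p}$ is Azumaya over its center, so every derivation trivial on the center is inner.

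Granted $\tilde\varphi$, the remainder is formal. For central $a,b$ with lifts $\tilde a,\tilde b$, the images $\tilde\varphi(\tilde a),\tilde\varphi(\tilde b)$ are lifts of $\varphi(a),\varphi(b)$, and since $\tilde\varphi$ is a ring homomorphism,
$$
[\tilde\varphi(\tilde a),\tilde\varphi(\tilde b)] \;=\; \tilde\varphi\bigl([\tilde a,\tilde b]\bigr) \;\in\; p\widetilde{W}.
$$
Dividing by $p$ and reducing modulo $p$, the left side yields $-\{\varphi(a),\varphi(b)\}$, while the right side yields $-\varphi(\{a,b\})$ because $\tilde\varphi$ descends to $\varphi$. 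The identity $\varphi(\{a,b\})=\{\varphi(a),\varphi(b)\}$ follows.

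The principal obstacle is the construction of the lift $\tilde\varphi$, for which the Azumaya property of the Weyl algebra in characteristic $p$ is the crucial input. If one wishes to sidestep this input, the same conclusion can be reached by retaining the error terms $pE_{ji}$ throughout the computation: because $\tilde a,\tilde b$ are already central modulo $p$, every commutation occurring in the computation of $[\tilde\varphi(\tilde a),\tilde\varphi(\tilde b)]$ carries an additional factor of $p$, so all $E_{ji}$-contributions land in $p^2\widetilde W$ and vanish upon the final division and reduction.
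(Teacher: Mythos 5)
The paper itself offers no proof of this lemma: it is asserted after the single remark that "the Poisson bracket on the center is induced by the Weyl algebra commutator." Your framework—passing to $W_{n,\mathbb{Z}/p^2\mathbb{Z}}$, reading off the bracket from commutators of lifts, and attempting to lift $\varphi$ itself—is exactly the right way to make that remark precise, and you have correctly identified that the entire content of the lemma concentrates in whether $\varphi$ admits a ring-homomorphism lift $\tilde\varphi$. Unfortunately neither of your two justifications for that crux holds up.

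For the main route, the obstruction to lifting $\varphi$ is (a representative of) a class in a degree-two cohomology group (a Hochschild or Lie-algebra $2$-cocycle built from the arrays $E_{ji}, F_{ij}, G_{ij}$ subject to the Jacobi-derived cocycle condition). The Azumaya fact you invoke—that every derivation of $W_{n,\mathbb{F}_p}$ trivial on the center is inner—is a statement about $HH^1$; it does not automatically control the $H^2$-class governing the lifting problem. The phrase "the obstruction reduces to an inner-derivation system" is doing all the work here, and as written it is an assertion rather than an argument; I do not see how the reduction goes, and the relevant $H^2$ is a priori nonzero.

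The alternative route contains a claim that is simply false. You assert that since $\varphi(a),\varphi(b)$ are central modulo $p$, every $E_{ji}$-contribution in the expansion of $[\tilde\varphi(\tilde a),\tilde\varphi(\tilde b)]$ picks up an extra factor of $p$ and hence lands in $p^2\widetilde W$. A direct computation already in the case $n=1$, $a=x^p$, $b=d^p$ shows otherwise. Write $[\tilde Q,\tilde P]=1+pE$ for arbitrary lifts $\tilde P,\tilde Q$ of $\varphi(x),\varphi(d)$. Iterating the commutator identity $[u^p,v]=\sum_{l}u^l[u,v]u^{p-1-l}$ twice and using $\sum_{l}u^l\,(\cdot)\,u^{p-1-l}\equiv\operatorname{ad}_u^{p-1}(\cdot)\pmod p$ and $\operatorname{ad}_Q^{p-1}(P^{p-1})=(p-1)!=-1$, one finds
\begin{equation*}
[\tilde P^p,\tilde Q^p]\;\equiv\; p\;-\;p\,\operatorname{ad}_Q^{p-1}\operatorname{ad}_P^{p-1}(\bar E)\pmod{p^2},
\end{equation*}
so the error term enters at order exactly $p$, not $p^2$. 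The conclusion $\{\varphi(x)^p,\varphi(d)^p\}=-1$ is then \emph{equivalent} to $\operatorname{ad}_Q^{p-1}\operatorname{ad}_P^{p-1}(\bar E)=0$, which is precisely the nontrivial content of the lemma; the proposed "sidestep" presupposes what it wants to prove. (This identity is in fact nonvacuous: the operator $\operatorname{ad}_Q^{p-1}\operatorname{ad}_P^{p-1}$ is nonzero on $W_{n,\mathbb{F}_p}$, so vanishing must come from special properties of $\bar E$, not from degree counting.) To repair the argument one would need either an actual construction of the lift $\tilde\varphi$—for which the $H^2$-obstruction must be shown to vanish—or a direct proof that $\operatorname{ad}_{\varphi(d_j)}^{p-1}\operatorname{ad}_{\varphi(x_i)}^{p-1}(\bar E)=0$ (and its analogues with $F_{ij},G_{ij}$), which is where the genuine work of the lemma lies.
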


\medskip

The last item is the ultraproduct of Weyl algebras. Just as with fields, the ultrafilter $\mathcal{U}$ induces a congruence on the direct product of Weyl algebras, so that one may define the algebra
\begin{equation*}
\mathcal{A}_n(\mathcal{U},[p])=\left(\prod_{m\in\mathbb{N}}A_{n,\mathbb{F}_{p_m}}\right)/\mathcal{U}.
\end{equation*}
As the coefficients of elements of $\mathcal{A}_n(\mathcal{U},[p])$ take values in $\left(\prod_{m\in\mathbb{N}}\mathbb{F}_{p_m}\right)\;/\;\mathcal{U}$, the algebra $\mathcal{A}_n(\mathcal{U},[p])$ is a $\mathbb{C}$-algebra, which contains the Weyl algebra $W_{n,\mathbb{C}}$ as a proper subalgebra (of sums of monomials of globally bounded degree). However, unlike $W_{n,\mathbb{C}}$, the ultraproduct algebra has a huge center given, obviously, by the ultraproduct of centers $C(W_{n,\mathbb{F}_{p_m}})$. Therefore:
\begin{lem} \label{redlemma5}
There is an injection
$$
\mathbb{C}[z_1,\ldots,z_{2n}]\rightarrow C(\mathcal{A}_n(\mathcal{U},[p])).
$$
\end{lem}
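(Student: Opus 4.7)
The plan is to exhibit the injection explicitly and then check that it is well-defined and injective via a standard Łoś-type argument.

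First I would define the map on generators. For $1 \leq i \leq n$, send $z_i$ to the class modulo $\mathcal{U}$ of the sequence $(x_i^{p_m})_m \in \prod_m W_{n,\mathbb{F}_{p_m}}$, and for $n+1 \leq i \leq 2n$, send $z_i$ to the class of $(d_{i-n}^{p_m})_m$. Each entry $x_i^{p_m}$ (resp.\ $d_j^{p_m}$) lies in $C(W_{n,\mathbb{F}_{p_m}})$, so the corresponding classes lie in $C(\mathcal{A}_n(\mathcal{U},[p]))$ (the center is preserved componentwise by the quotient construction). Extend to monomials multiplicatively; this is consistent because the images of the $z_i$ pairwise commute. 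Finally, extend $\mathbb{C}$-linearly using the identification $\mathbb{C} \simeq (\prod_m \mathbb{F}_{p_m})/\mathcal{U}$ of Lemma \ref{redlemma2}: for a scalar $c \in \mathbb{C}$ represented by a sequence $(c^{(m)})_m$, a polynomial $P = \sum_\alpha c_\alpha z^\alpha$ is sent to the class of the componentwise polynomial $(P^{(m)})_m$ with
\[
P^{(m)} = \sum_\alpha c_\alpha^{(m)}\,(x_1^{p_m})^{\alpha_1} \cdots (d_n^{p_m})^{\alpha_{2n}}.
\]
Standard checks show that this assignment is independent of the representatives chosen for the scalars and that it respects addition and multiplication, producing a well-defined $\mathbb{C}$-algebra homomorphism into $C(\mathcal{A}_n(\mathcal{U},[p]))$.

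For injectivity, suppose $P \in \mathbb{C}[z_1,\ldots,z_{2n}]$ maps to zero. Unwinding the definitions, this means that
\[
\{\,m \in \mathbb{N} \;:\; P^{(m)} = 0 \text{ in } W_{n,\mathbb{F}_{p_m}}\,\} \in \mathcal{U}.
\]
The key observation is that the monomials $(x_1^{p_m})^{\alpha_1}\cdots(d_n^{p_m})^{\alpha_{2n}}$ are $\mathbb{F}_{p_m}$-linearly independent in $W_{n,\mathbb{F}_{p_m}}$, since they form a subfamily of the PBW-type monomial basis for the Weyl algebra. Hence $P^{(m)} = 0$ forces $c_\alpha^{(m)} = 0$ for every multi-index $\alpha$. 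Choose any $\alpha$ with $c_\alpha \neq 0$ in $\mathbb{C}$; by Lemma \ref{redlemma2}, the set $\{m : c_\alpha^{(m)} \neq 0\}$ belongs to $\mathcal{U}$. But this set is disjoint from $\{m : P^{(m)} = 0\}$, contradicting the fact that both lie in $\mathcal{U}$. Therefore all $c_\alpha$ vanish, so $P = 0$.

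There is no serious obstacle here; the statement is essentially a formal consequence of Łoś's theorem combined with the structural fact that the $p$-th powers of Weyl generators produce linearly independent monomials. The only point demanding minor care is the verification that the map is well-defined with respect to the two layers of equivalence classes (one for the scalars via Lemma \ref{redlemma2}, one for the Weyl algebra elements via $\mathcal{U}$), but this is routine once one chooses representatives consistently. No further input beyond Lemmas \ref{redlemma2} and \ref{redlemma3} and the elementary description of $C(W_{n,\mathbb{F}_p})$ given earlier in the subsection is required.
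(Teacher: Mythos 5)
The paper states Lemma \ref{redlemma5} without supplying a proof (it is treated as obvious alongside the structural facts about $C(W_{n,\mathbb{F}_p})$), so there is no written argument to compare against. Your proof is the natural one and it is correct: send $z_i$ to the class of $(x_i^{p_m})_m$ or $(d_{i-n}^{p_m})_m$, which is central because each component is; the map is well-defined modulo the choice of representatives by the usual finite-intersection argument for ultrafilters; and injectivity follows from the $\mathbb{F}_{p_m}$-linear independence of the $p_m$-th-power monomials (a subfamily of the PBW basis of $W_{n,\mathbb{F}_{p_m}}$) together with the fact that two $\mathcal{U}$-large sets cannot be disjoint. The one implicit choice you rely on --- fixing the Steinitz isomorphism $\mathbb{C} \simeq (\prod_m \mathbb{F}_{p_m})/\mathcal{U}$ of Lemma \ref{redlemma2} once and for all --- is the same choice the paper makes in constructing $\Phi$, so no new issue arises. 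A very minor note: Lemma \ref{redlemma3}, which you cite as an input, is about endomorphisms preserving the center and is not actually used here; the only structural inputs are Lemma \ref{redlemma2} and the description $C(W_{n,\mathbb{F}_p}) = \mathbb{F}_p[x_1^p,\ldots,d_n^p]$.
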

\smallskip

The homomorphism $\Phi$ is constructed in the following way. Given an endomorphism $\varphi$ of $W_{n,\mathbb{C}}$, one represents its coefficients (that is, the coefficients of the images $\varphi(x_i)$ and $\varphi(d_j)$ in the standard $\mathbb{C}$-basis) as elements of the reduced direct product $\left(\prod_{m\in\mathbb{N}}\mathbb{F}_{p_m}\right)\;/\;\mathcal{U}$ and constructs from them an array of endomorphisms $\varphi_{p_m}$ of $W_{n,\mathbb{F}_{p_m}}$ (as there are only finitely many constraints on the coefficients of $\varphi$, this reconstruction will be possible for almost all -- in the sense of $\mathcal{U}$ -- indices $m$). Then one restricts to the center to obtain $\varphi_{p_m}^c$. Since these will be endomorphisms of bounded degree, they will give an endomorphism
$$
C(\mathcal{A}_n(\mathcal{U},[p]))\rightarrow C(\mathcal{A}_n(\mathcal{U},[p]))
$$
of the center of the ultraproduct algebra which will map its subalgebra $\mathbb{C}[z_1,\ldots,z_{2n}]$ to itself.
\smallskip

One could stop here and denote the resulting symplectomorphism of $\mathbb{C}[z_1,\ldots,z_{2n}]$ by $\Phi(\varphi)$; we will, however, slightly twist the morphism in order to get rid of the explicit dependence of the choice of infinite prime $[p]$. Note that in characteristic $p$ the correpsondence $\varphi\mapsto \varphi^c$, when $\varphi$ is a linear change of variables (given by a symplectic matrix $A=(a_{ij})$), produces a symplectomorphism whose matrix is $A^c=(a_{ij}^p)$. If one applied the inverse Frobenius automorphism to the scalars (takes the $p$-th root), one would recover a symplectomorphism given by exactly the same set of coordinates (entries of the matrix in this case) as that of the Weyl algebra morphism. Now, any automorphism of the base field induces an automorphism of the algebra, so this added procedure does not ruin the homomorphic property of the correspondence.
\smallskip

Combined in the ultraproduct, the (inverse) Frobenius automorphisms give rise to an automorphism of $\mathbb{C}$, which again induces an automorphism of the algebras. The entire correspondence now consists of the following steps: start with a Weyl algebra automorphism, take it to the ultraproduct, restrict to the center by $\varphi\mapsto \varphi^c$ (for almost all $p_m$), twist by inverse Frobenius, reassemble as a $\mathbb{C}$-symplectomorphisms. Denote the resulting homomorphism by $\Phi(\varphi)$. By Lemma \ref{redlemma4}, $\Phi(\varphi)$ preserves the Poisson bracket -- in other words, it is a symplectic map. To complete the construction, we state the following lemma, which is proved as part of the main proposition of \cite{Tsu2}.
\begin{lem} \label{redlemma6}
The resulting polynomial endomorphism $\Phi(\varphi)$ is an automorphism if and only if $\varphi$ is an automorphism.
\end{lem}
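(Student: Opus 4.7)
The plan would be to prove the two implications separately. For the forward direction --- $\varphi \in \Aut W_{n,\mathbb{C}}$ implies $\Phi(\varphi) \in \Aut P_{n,\mathbb{C}}$ --- I would observe that each step of the construction of $\Phi$ (reduction modulo $p_m$, restriction to the center $\varphi \mapsto \varphi^c$, inverse Frobenius twist, and ultraproduct reassembly) is manifestly a monoid homomorphism on endomorphism sets. Consequently $\Phi(\varphi)\Phi(\varphi^{-1}) = \Phi(\mathrm{id}) = \mathrm{id}$, so $\Phi(\varphi)$ is invertible whenever $\varphi$ is, with inverse $\Phi(\varphi^{-1})$.

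The substantive content is the converse, and the main auxiliary result I would establish is the following characteristic-$p$ lemma: if $\psi \in \End W_{n,\mathbb{F}_p}$ restricts to an automorphism $\psi^c$ of the center $Z = \mathbb{F}_p[x_1^p,\ldots,x_n^p,d_1^p,\ldots,d_n^p]$, then $\psi$ is itself an automorphism of $W_{n,\mathbb{F}_p}$. I would prove this using that $W_{n,\mathbb{F}_p}$ is Azumaya of rank $p^{2n}$ over $Z$. Injectivity is automatic, since two-sided ideals of an Azumaya algebra are generated by ideals of its center, and $\psi^c$ an automorphism leaves no room for a nontrivial kernel. For surjectivity I would view $\psi$ as a $Z$-algebra map $W_{n,\mathbb{F}_p} \to W_{n,\mathbb{F}_p}^{(\psi^c)}$, where the target carries the $Z$-action twisted by $\psi^c$; both sides are free $Z$-modules of rank $p^{2n}$, and on each fiber over a point of $\Spec Z$ the map is a unital homomorphism between central simple algebras of equal dimension, hence an isomorphism. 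A standard locally-free sheaf / Nakayama argument then upgrades this fiberwise statement to global surjectivity.

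Granting this lemma, I would conclude the converse as follows. Suppose $\Phi(\varphi) \in \Aut P_{n,\mathbb{C}}$ with inverse of degree $N$. For $\mathcal{U}$-almost all $m$ the ultraproduct decomposition yields $\varphi_{p_m} \in \End W_{n,\mathbb{F}_{p_m}}$ whose restriction to the center agrees, after the invertible Frobenius untwist, with the reduction of $\Phi(\varphi)$ modulo $\mathcal{U}$, and is therefore an automorphism of the center. By the Azumaya lemma, each $\varphi_{p_m}$ is itself an automorphism; put $\psi_{p_m} = \varphi_{p_m}^{-1}$. To assemble these into an endomorphism of $W_{n,\mathbb{C}}$ I need a uniform degree bound, and here I would use the $p$-th power structure of the center: the identity $\psi_{p_m}(x_i)^p = \psi_{p_m}(x_i^p)$ places $\psi_{p_m}(x_i)^p$ in $Z_{p_m}$, and as a polynomial in the center generators $\xi_j = x_j^p$, $\eta_k = d_k^p$ it has degree equal to the corresponding component of $\Phi(\varphi)^{-1}$, hence $\leq N$; in the Weyl generators this is degree $\leq pN$, forcing $\Deg \psi_{p_m}(x_i) \leq N$, and analogously for the $\psi_{p_m}(d_j)$.

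With this uniform bound the sequence $(\psi_{p_m})$ descends via $\mathbb{C} \simeq \prod_m \mathbb{F}_{p_m}/\mathcal{U}$ to a polynomial endomorphism $\psi \in \End W_{n,\mathbb{C}}$ of degree $\leq N$. Since $\psi_{p_m} \circ \varphi_{p_m} = \mathrm{id}$ for $\mathcal{U}$-almost all $m$ (and symmetrically on the other side), the identities $\psi \circ \varphi = \mathrm{id}$ and $\varphi \circ \psi = \mathrm{id}$ hold in $\End W_{n,\mathbb{C}}$, and $\varphi$ is an automorphism. The principal obstacle is the Azumaya lemma in characteristic $p$; the uniform degree bound derived from the Frobenius $p$-th-power structure of the center is the crucial technical ingredient making the ultraproduct reassembly work, and without it one would only obtain an inverse inside the ultraproduct algebra $\mathcal{A}_n(\mathcal{U},[p])$ rather than inside $W_{n,\mathbb{C}}$ itself.
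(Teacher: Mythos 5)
Your proposal is correct. The paper itself does not prove Lemma~\ref{redlemma6} --- it defers to the main proposition of Tsuchimoto~\cite{Tsu2} --- so your argument supplies what the text merely cites, and the two ingredients you isolate are the right ones: (i) the Azumaya lemma that an endomorphism of $W_{n,\mathbb{F}_p}$ whose restriction to the center $Z$ is an automorphism must itself be an automorphism (injectivity via the bijection between two-sided ideals of an Azumaya algebra and ideals of its center, surjectivity via the fiberwise comparison of central simple algebras of equal rank followed by Nakayama on the cokernel); and (ii) the uniform degree bound on the candidate inverse read off from $\psi_{p_m}(x_i)^p = \psi_{p_m}^c(x_i^p)$ having degree at most $N = \Deg \Phi(\varphi)^{-1}$ in the central generators $x_j^p, d_k^p$. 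One step worth making explicit in a full write-up is that passing from $\Deg\bigl(\psi_{p_m}(x_i)^p\bigr) \le pN$ to $\Deg\psi_{p_m}(x_i) \le N$ uses multiplicativity of degree in $W_{n,\mathbb{F}_p}$, which holds because the associated graded algebra is a polynomial ring (a domain); this is standard but is the load-bearing fact behind the uniform bound, without which one would only invert inside the ultraproduct algebra $\mathcal{A}_n(\mathcal{U},[p])$ rather than inside $W_{n,\mathbb{C}}$, exactly as you note. The forward direction via $\Phi$ being a monoid homomorphism on endomorphism sets is also correct and is the easy half.
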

This concludes the definition of the homomorphism $\Phi$ which is our candidate for Conjecture \ref{mainconj}.

\smallskip

Some of the properties of $\Phi$ may be established right away, without the need for a complicated development such as approximation and lifting. For the proof of the next proposition, cf. \cite{BKK1}.
\begin{prop} \label{phiismono}
$\Phi$ is injective.
\end{prop}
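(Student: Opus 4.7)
The plan is to show that $\Ker \Phi$ is trivial. Suppose $\varphi \in \Aut W_{n,\mathbb{C}}$ satisfies $\Phi(\varphi) = \Id_{P_{n,\mathbb{C}}}$. Unwinding the ultraproduct construction of $\Phi$ (Lemmas \ref{redlemma1}--\ref{redlemma5}) together with the inverse Frobenius twist on coefficients, the identity $\Phi(\varphi)(z_i) = z_i$ on generators translates, for $\mathcal{U}$-almost every prime $p = p_m$ in the sequence defining the infinite prime $[p]$, into the pair of relations
$$
\varphi_p(x_i)^p = x_i^p, \qquad \varphi_p(d_i)^p = d_i^p, \qquad i = 1, \ldots, n,
$$
inside $W_{n,\mathbb{F}_p}$, where $\varphi_p$ denotes the endomorphism induced on the reduction modulo $p$.

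The key step is to upgrade these $p$-th power identities to $\varphi_p = \Id$ on the nose. Writing $u_i = \varphi_p(x_i) - x_i$ and $v_i = \varphi_p(d_i) - d_i$, both of degree uniformly bounded by $\Deg \varphi$ independently of $p$, and applying the Jacobson $p$-th power identity $(a + b)^p = a^p + b^p + \Lambda_p(a, b)$, where $\Lambda_p$ is a Lie polynomial of weight $p$ built from iterated brackets, the conditions above become
$$
u_i^p + \Lambda_p(x_i, u_i) = 0, \qquad v_i^p + \Lambda_p(d_i, v_i) = 0.
$$
For $\Deg u_i \geq 2$ and $p$ large compared to $\Deg \varphi$, a degree comparison in the standard monomial basis of $W_{n,\mathbb{F}_p}$ shows that the leading-degree component of $u_i^p$ dominates every summand of $\Lambda_p(x_i, u_i)$ --- each $\ad$ in the Weyl algebra lowers total degree by at least two, so the many commutators constituting $\Lambda_p$ collapse quickly in degree. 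This forces the leading homogeneous part of $u_i$ to vanish in the domain $W_{n,\mathbb{F}_p}$, and iteration gives $u_i = 0$. The linear component of $u_i$ (and of $v_i$) is handled independently: the linear part of $\Phi(\varphi)$ is the identity symplectic matrix, which after undoing the Frobenius twist on scalars forces the linear part of $\varphi_p$ to be the identity as well. Hence $\varphi_p = \Id$ for $\mathcal{U}$-almost every $p$.

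Reassembling via the ultraproduct isomorphism of Lemma \ref{redlemma2}, every coefficient in the standard $\mathbb{C}$-basis expansion of $\varphi(x_i) - x_i$ and $\varphi(d_i) - d_i$ is represented by a sequence that vanishes $\mathcal{U}$-almost everywhere, hence vanishes in $\mathbb{C}$; therefore $\varphi = \Id$. The principal technical hurdle is the degree-comparison step, whose delicacy lies in the need to treat the linear case separately via the explicit action of $\Phi$ on linear symplectomorphisms, since there the Jacobson term $\Lambda_p$ has the same growth rate in $p$ as $u_i^p$ and does not permit a direct domination. A more structural alternative, outlined in \cite{BKK1}, appeals to the Azumaya structure of $W_{n,\mathbb{F}_p}$ over its $p$-th power center together with a Skolem-Noether-type analysis showing that an automorphism of $W_{n,\mathbb{F}_p}$ fixing the center must be inner, and then trivial because the relevant unit group of $W_{n,\mathbb{F}_p}$ reduces modulo scalars to the identity.
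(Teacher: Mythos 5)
The paper itself does not supply a proof for this proposition; it simply defers to \cite{BKK1}, so a direct comparison is not possible. That said, your argument is a self-contained and plausible route to injectivity, and its core step is correct.

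The reduction to showing $\varphi_p = \Id$ for $\mathcal U$-almost all $p$, and the use of Jacobson's identity
$(a+b)^p = a^p + b^p + \Lambda_p(a,b)$
together with the fact that each Weyl commutator drops filtration degree by at least two, is sound: with $k$ occurrences of $u_i$ and $p-k$ of $x_i$ in an iterated bracket of weight $p$, the degree is at most $k\Deg u_i + (p-k) - 2(p-1)$, which is strictly less than $p\Deg u_i$ whenever $\Deg u_i \geq 1$ and $p \geq 2$; hence the top symbol of $u_i^p$ (a genuine $p$-th power in the commutative associated graded, so nonzero when $u_i \neq 0$) cannot be cancelled, and $u_i = 0$ follows directly.

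However, your treatment of the linear case is both superfluous and mis-justified. You claim that for $\Deg u_i = 1$ the Jacobson term ``has the same growth rate in $p$ as $u_i^p$ and does not permit a direct domination''; this is not so. When both arguments of $\Lambda_p$ are linear, $[x_i, u_i]$ is a scalar, and every subsequent bracket with a scalar vanishes, so $\Lambda_p(x_i, u_i) = 0$ identically for $p \geq 3$. Thus $u_i^p = 0$ and, since $W_{n,\mathbb F_p}$ is a domain (its associated graded is a polynomial ring), $u_i = 0$. In other words, the degree argument already handles $\Deg u_i = 1$; no appeal to the linear part of $\Phi(\varphi)$ is required. Your fallback is not wrong as a two-step argument (once $\Deg u_i \geq 2$ is excluded for all $i$, $\varphi_p$ is linear and $\varphi_p^c$ is then the entrywise $p$-th power of its matrix, so the Frobenius twist recovers it), but the stated rationale for needing it reveals a miscalculation of degrees. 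The structural alternative you sketch (Azumaya property of $W_{n,\mathbb F_p}$ over its center, Skolem--Noether over the polynomial center with trivial Picard group, and triviality of inner automorphisms since the only units of $W_{n,\mathbb F_p}$ are scalars) is also a correct and arguably cleaner route.
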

Surjectivity of $\Phi$ would imply Conjecture \ref{mainconj}; there seems to be no direct way to obtain this result (in a manner similar to some of the proofs of \cite{Tsu1}, for instance). Constructing an inverse homomorphism via approximation and lifting of symplectomorphisms was viewed as a more viable approach.

\smallskip

It is interesting to note another conjecture in connection to the morphism $\Phi$, or rather to the way it is constructed.
\begin{conj} \label{secondconj}
The homomorphism $\Phi$ is independent of the choice of infinite prime $[p]$.
\end{conj}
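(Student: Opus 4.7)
The strategy is to leverage the fact that $\Phi$ restricted to tame automorphisms coincides with the canonical tame isomorphism of \cite{BKK1}, which is given by explicit polynomial formulas on elementary generators and is therefore manifestly independent of the choice of infinite prime $[p]$ and ultrafilter $\mathcal{U}$. Concretely, for an elementary Weyl automorphism $d_i \mapsto d_i + f(x_1,\ldots,x_n)$, its $p$-th power reduction and restriction to the center yields an explicit polynomial symplectomorphism whose inverse-Frobenius twist produces the elementary symplectic transformation $p_i \mapsto p_i + f(x_1,\ldots,x_n)$, regardless of which prime $p$ is chosen from the sequence representing $[p]$. The analogous verification for the other elementary generators and for $\Sp$-linear changes of variables shows that $\Phi|_{\TAut W_{n,\mathbb{C}}}$ is a $[p]$-independent isomorphism onto $\TAut P_{n,\mathbb{C}}$.

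Next I would upgrade this identification from tame automorphisms to the full group using the machinery developed for the Main Theorem. By the augmented analogue of Theorem \ref{thmkge}, the subgroup $\TAut W_{n,\mathbb{C}}$ is dense in $\Aut W_{n,\mathbb{C}}$ in the augmentation topology, so every $\varphi \in \Aut W_{n,\mathbb{C}}$ can be written as a power-series limit of tame automorphisms $\psi_k$. The images $\Phi(\psi_k)$ are tame symplectomorphisms whose definitions involve no choice of $[p]$; if one can show that the value $\Phi(\varphi)$ is the limit of these tame images in a $[p]$-independent manner, the conjecture follows by uniqueness of limits.

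The technical heart of the argument is ensuring this continuity. As the paper emphasizes, $\Phi$ is not a priori continuous on $\Aut W_{n,\mathbb{C}}$ in the power series topology, which is precisely why the skew augmented algebras $W_{n,\mathbb{C}}^h[k_{ij}]$ and $P_{n,\mathbb{C}}^h[k_{ij}]$ were introduced. I would pass to the skew augmented setting, where the corresponding homomorphism $\Phi^h$ is continuous in augmentation topology; combining this with $[p]$-independence on the dense tame subgroup and the singularity trick (Lemma \ref{Lm2} and Proposition \ref{singtrick}) identifies the continuous extension $\Phi^h$ uniquely and without reference to $[p]$. One then specializes the augmentation parameter $h$ and the central variables $k_{ij}$ to their non-augmented values, following the specialization step from the proof of the Main Theorem. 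Since the specialization is a canonical algebraic operation involving no ultrafilter data, the resulting map on $\Aut W_{n,\mathbb{C}}$ inherits the $[p]$-independence of $\Phi^h$.

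The main obstacle I anticipate is the compatibility of the limiting procedure in the skew augmented setting with the specialization of the deformation parameters: one must verify that two different choices of $[p]$, each producing its own continuous $\Phi^h_{[p]}$, yield the same specialized map on the non-augmented Weyl algebra. This is handled by the observation (noted at the end of Section 2.3) that tame approximation in the skew augmented case is polynomial in $h$ and in $k_{ij}$, so the limits $\lim_k \Phi^h(\psi_k)$ are themselves polynomial expressions whose coefficients are determined entirely by the tame data and thus by the canonical tame isomorphism. Once this polynomial character is established, specialization is manifestly $[p]$-independent and the conjecture follows.
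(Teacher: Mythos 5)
The paper itself does not prove Conjecture~\ref{secondconj}; it states it as a conjecture, remarks that it is ``indeed positive,'' and refers the reader to \cite{K-BE2} for the proof, noting only that the argument ``ultimately uses the properties of the same augmented versions of $W_n$ and $P_n$ we present in this paper'' and that Theorem~\ref{mainthmskew} is significant to it. So there is no in-text proof against which your proposal can be checked line by line. With that caveat, your outline matches the approach the paper signals: exploit the $[p]$-independence of $\Phi$ on $\TAut$, pass to the skew augmented algebras where continuity in the power series topology is available, use density of tame automorphisms to pin down the morphism uniquely, and specialize back.

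Two things you should be careful about. First, a small imprecision: it is $\Phi^{hk}$ on $\Aut_{u,v,k}$ (Theorem~\ref{skewcontthm}), not $\Phi^h$, that is shown to be continuous; $\Phi^h$ inherits good behavior only indirectly through the localization that realizes $P^h_n$ inside the skew algebra, and your argument should be phrased at the level of $\Phi^{hk}$ before descending. Second, and more seriously, the specialization step is where your argument is thinnest. The issue, which the paper makes explicit in the specialization subsection, is that homogenization (conjugation by $x_i\mapsto hx_i$) in general produces automorphisms with negative powers of $h$, so the non-augmented $\Phi$ is \emph{not} merely the $h=1$ specialization of $\Phi^h$ restricted to points polynomial in $h$. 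One needs the extension of the lifting to points rational in $h$ (Lemmas~\ref{twistedliftcanon}--\ref{commutationlemma2}), and it is precisely this extension --- built via the auxiliary-variable twisting and Corollary~\ref{correduction} --- that transports $[p]$-independence from the $h$-augmented level down to the original $\Phi$. Your remark that the tame approximants are polynomial in $h$ and $k_{ij}$ addresses the approximating sequence but not the limit's rational dependence on $h$ after dehomogenization; to close the gap you must invoke the full rational-domain extension machinery rather than polynomiality of the tame data alone.
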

The conjecture is not vacuous, for while we did get rid of the explicit dependence of the coefficients on $[p]$ by inverse Frobenius twist, the construction of $\Phi$ via a reduced direct product decomposition allows for an implicit dependence in the following way. Observe that at the stage of decomposed automorphism $\varphi$ (with induced positive characteristic automorphisms $\varphi_{p_m}$ for almost all $p_m$), the place $p_m$ (i.e. the value $p_m$ at index $m$) could have an influence on which monomials are not zero in the images $\varphi_{p_m}(x_i)$, and while the highest-degree monomials do not change (which follows easily if one recalls what $\varphi\mapsto\varphi^c$ is for every $p$ and keeps in mind the commutation relations), it cannot be readily obtained that the rest of the monomials are fixed. Any direct approach to nail down these lower-degree places seems to be insufficient, therefore an indirect method of handling the places, by means of discovering certain rigid properties of $\Phi$, has to be implemented. This is what has been done by us recently, and Conjecture \ref{secondconj} is indeed positive. The conjecture is the subject of our work \cite{K-BE2}, and the proof (specifically its later stage) ultimately uses the properties of the same augmented versions of $W_n$ and $P_n$ we present in this paper.

\medskip

We finish this subsection by formulating the crucial theorem which allows $\Phi$ to be used in the subsequent development of lifting. It first appeared and was proved in \cite{BKK1}.
\begin{thm} \label{idtame}
The homomorphism $\Phi$ induces an isomorphism of the tame subgroups:
\begin{equation*}
\Phi:\TAut W_{n,\mathbb{C}}\xrightarrow{\sim} \TAut P_{n,\mathbb{C}}.
\end{equation*}
\end{thm}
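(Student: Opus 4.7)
The plan is to verify the theorem on a set of generators for $\TAut W_{n,\mathbb{C}}$. Both tame groups are generated by two analogous families of elementary transformations: the linear symplectic automorphisms parametrized by $\Sp(2n,\mathbb{C})$, and the elementary shears of the form $x_k \mapsto x_k + f(d_k)$ (or dually $d_k \mapsto d_k + g(x_k)$) with all other generators fixed; on the Poisson side the same structure holds with the commuting momenta $p_k$ in place of $d_k$. It is therefore enough to show that $\Phi$ carries each elementary Weyl generator to a corresponding elementary Poisson generator and that this correspondence is bijective.

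For the linear case, let $\varphi_A$ be the automorphism of $W_{n,\mathbb{C}}$ attached to $A = (a_{ij}) \in \Sp(2n,\mathbb{C})$. Choosing lifts of the coefficients $a_{ij}$ to the ultraproduct $\prod_m \mathbb{F}_{p_m}/\mathcal{U}$, the reduction mod $p_m$ of $\varphi_A$ is still a linear symplectic substitution with matrix $(a_{ij} \bmod p_m)$. Its restriction to the center of $W_{n,\mathbb{F}_{p_m}}$ is then the substitution with matrix $(a_{ij}^{p_m})$, since linear combinations of commuting variables intertwine with the Frobenius map. The inverse Frobenius twist built into $\Phi$ undoes this $p_m$-th power, so that $\Phi(\varphi_A)$ is the linear Poisson symplectomorphism with matrix $A$; hence $\Phi$ is the identity on the linear subgroup.

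For an elementary shear $\varphi: x_k \mapsto x_k + f(d_k)$ with all other generators fixed, the induced endomorphism of the center sends $x_k^{p_m}$ to $(x_k + f(d_k))^{p_m}$ and leaves the remaining central generators unchanged. The key observation is that $[x_k, f(d_k)] = -f'(d_k) \in \mathbb{F}_{p_m}[d_k]$, and inductively $\ad_{x_k}^{j}(f(d_k)) = (-1)^j f^{(j)}(d_k) \in \mathbb{F}_{p_m}[d_k]$, with this iterate vanishing once $j > \deg f$. Because all iterated commutators remain inside a single commutative subalgebra, the Jacobson $p$-power formula specializes to
\begin{equation*}
(x_k + f(d_k))^{p_m} \equiv x_k^{p_m} + f(d_k)^{p_m} + R_{p_m}(d_k) \pmod{p_m},
\end{equation*}
where $R_{p_m} \in \mathbb{F}_{p_m}[d_k^{p_m}]$ is a correction term whose coefficients carry combinatorial prefactors divisible by $p_m$ once $p_m > \deg f$. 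Since the exceptional set of primes is finite, $R_{p_m}$ vanishes in the ultraproduct. The inverse Frobenius twist converts the remaining term $f(d_k)^{p_m} = f^{(p_m)}(d_k^{p_m})$ back to $f$, and the identification of $x_k^{p_m}, d_k^{p_m}$ with the Poisson generators $x_k, p_k$ produces the elementary Poisson shear $x_k \mapsto x_k + f(p_k)$ determined by the same polynomial $f$. The dual shears $d_k \mapsto d_k + g(x_k)$ are handled symmetrically, so $\Phi$ sets up a bijection between the elementary families on the two sides.

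Combined with the fact that $\Phi$ is a group homomorphism, this gives $\Phi(\TAut W_{n,\mathbb{C}}) = \TAut P_{n,\mathbb{C}}$; injectivity is immediate from Proposition \ref{phiismono}. The main obstacle, and the technical heart of the argument, is controlling the Jacobson correction $R_{p_m}$: one must verify that the combinatorial coefficients appearing in the $p$-power expansion of $(x_k + f(d_k))^{p_m}$ are all divisible by $p_m$ for sufficiently large $m$, so that the ultraproduct limit is independent of the infinite prime $[p]$ and reproduces exactly the shear determined by $f$. This ultimately rests on the local nilpotence of $\ad_{x_k}$ on $\mathbb{F}_{p_m}[d_k]$ with nilpotence index bounded by $\deg f + 1$, which ensures that all non-commutative corrections in the Jacobson formula disappear modulo the ultrafilter.
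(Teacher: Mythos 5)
The paper itself does not prove Theorem \ref{idtame}; it cites \cite{BKK1}, where the statement first appeared. Your argument follows the standard route that underlies that reference: verify $\Phi$ on the generators of $\TAut$, reducing the shear case to the identity $(x_k+f)^{p}\equiv x_k^{p}+f^{p}$ in $W_{n,\mathbb{F}_p}$ for all but finitely many $p$, and conclude by noting that $\Phi$ thereby maps the Weyl generators bijectively onto the corresponding Poisson generators. Two corrections are in order. First, the elementary symplectic shears are $x_k\mapsto x_k+f(p_1,\ldots,p_n)$ with $f$ allowed to depend on all momenta, not only on $p_k$; your iterated-commutator computation carries over verbatim with $f^{(j)}(d_k)$ replaced by $\partial_{d_k}^{j}f(d_1,\ldots,d_n)$, so this is only a matter of stating the generating set correctly. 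Second, the clause about ``combinatorial prefactors divisible by $p_m$'' misdescribes why $R_{p_m}$ dies: the Jacobson terms $s_i(x_k,f(d))$ have unit coefficients modulo $p_m$, and the actual mechanism is the one you name only in the last paragraph, namely local nilpotence of $\ad_{x_k}$ on $\mathbb{F}_{p_m}[d_1,\ldots,d_n]$. This kills every $s_i$ containing two or more copies of $f(d)$ and leaves only $s_{p_m-1}$, which is proportional to $\partial_{d_k}^{p_m-1}f$ and hence vanishes once $p_m>\Deg f+1$; your stated bound $p_m>\Deg f$ is off by one (e.g.\ $f=p_k^{2}$ at $p_m=3$), though finiteness of the exceptional set, which is all that is used, is unaffected. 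With these adjustments the proposal is sound and, to the best of our knowledge, coincides with the argument of \cite{BKK1}.
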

The theorem makes lifting of tame symplectomorphisms possible at once, and the approximation as developed in \cite{KGE} allows lifting of arbitrary symplectomorphisms to automorphisms of power series completion of $W_n$, so that a local version of Conjecture \ref{mainconj} holds.

To conclude this subsection, we state the following theorem.
\begin{thm} \label{thmgabber}
The mappings
$$
\Phi_N:\Aut^{\leq N} W_{n,\mathbb{C}}\rightarrow \Aut^{\leq N} P_{n,\mathbb{C}}
$$
induced by $\Phi$
are morphisms of algebraic varieties.
\end{thm}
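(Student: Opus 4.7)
The plan is to work with explicit coordinate systems on both sides and trace the construction of $\Phi$ through them. On the domain side, $\Aut^{\leq N} W_{n,\mathbb{C}}$ embeds into an affine space whose coordinates are the PBW coefficients $a_{I,J}^{(i)}, b_{I,J}^{(i)}$ of the images $\varphi(x_i), \varphi(d_j)$ with $|I|+|J|\leq N$, and similarly on the codomain side with commuting monomials. I would show that each coefficient of the images $\Phi(\varphi)(z_k)$ can be expressed as a polynomial in the $a_{I,J}^{(i)}, b_{I,J}^{(i)}$ by universal (i.e.\ $p$-independent) formulas.

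First I would check the degree bound $\Deg \Phi(\varphi)\leq \Deg \varphi$ so that $\Phi_N$ genuinely lands in $\Aut^{\leq N} P_{n,\mathbb{C}}$. In positive characteristic the identity $\varphi_p(x_i^p)=\varphi_p(x_i)^p$ gives an element of degree $\leq pN$ in the Weyl generators, equivalently of degree $\leq N$ in the central generators $x_j^p, d_j^p$, a bound preserved by the inverse-Frobenius untwist.

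The heart of the proof is a universality argument. Introduce the universal endomorphism with symbolic coefficients over $\mathbb{Z}[a_{I,J}^{(i)}, b_{I,J}^{(i)}]$ and, for each prime $p$, expand $\bigl(\sum_{I,J} a_{I,J}^{(i)} x^I d^J\bigr)^p$ using the noncommutative binomial theorem together with Jacobson's formula and the identity $(\ad a)^p(b) = [a^p,b]$ valid in characteristic $p$. The non-central contributions telescope, leaving a polynomial in $x_j^p, d_j^p$ whose scalar coefficients are $p$-th powers of fixed integer polynomials in the $a_{I,J}^{(i)}$ whenever $p>N$. The inverse-Frobenius step extracts those $p$-th roots, producing a $p$-independent polynomial formula over $\mathbb{Z}$. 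Since these formulas are identical for almost all $p$ in the sense of $\mathcal{U}$, they descend through the ultraproduct decomposition $\mathbb{C}\simeq (\prod_m \mathbb{F}_{p_m})/\mathcal{U}$ to the same polynomial formulas over $\mathbb{C}$, exhibiting $\Phi_N$ as a morphism of affine algebraic varieties.

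The main obstacle is precisely this $p$-independence and integrality claim for the central part of $\varphi(x_i)^p$: all the information of $\Phi$ is packed into a combinatorial identity about iterated Weyl-algebra brackets, and one must verify that the combinatorial coefficients appearing in the central expansion are truly $p$-th powers of universal integer polynomials, modulo relations that vanish in the center. Once this combinatorial fact is in hand (carried out in detail in \cite{Tsu2} and \cite{BKK1}), compatibility with the ultraproduct and with the $\Aut^{\leq N}$ filtration is automatic, and the morphism property of $\Phi_N$ follows.
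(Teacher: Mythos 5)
Your proposal correctly reduces the problem to showing that each coefficient of $\Phi(\varphi)(z_k)$ is a universal polynomial in the coefficients of $\varphi$, and correctly identifies the reduction mod $p$ / Jacobson-formula / Frobenius-untwist circle of ideas as the relevant mechanism. However, the crucial step is exactly the one you defer to the literature, and as stated it contains a genuine gap. The claim that after the Jacobson expansion "the non-central contributions telescope, leaving a polynomial in $x_j^p, d_j^p$ whose scalar coefficients are $p$-th powers of fixed integer polynomials in the $a_{I,J}$ whenever $p>N$" cannot hold identically in the ambient coefficients: for a generic point of the affine coefficient space, $\sum a_{I,J}x^I d^J$ raised to the $p$-th power is not even central, so there is no such expansion, and on the endomorphism locus the non-central Jacobson terms do not in fact vanish for $p>N$ when $N\geq 2$ (a $(p-1)$-fold bracket of degree-$N$ elements has degree $\leq p(N-2)+2$, which remains positive). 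So the cancellation of the non-central part and the $p$-th-power structure of the central part both hold only \emph{modulo the defining ideal} of $\End^{\leq N}W_{n,\overline{\mathbb{F}_p}}$, and this is precisely what your proposal does not verify; the phrase "modulo relations that vanish in the center" gestures at it without pinning it down.

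There is a further internal signal that the direct universality argument is stronger than what is actually available: if your scheme went through as written, it would immediately produce a $\mathbb{Z}$- (hence $\mathbb{Q}$-) defined morphism $\Phi_N$, but the present paper explicitly treats the $\mathbb{Q}$-rationality of the image of the reduction map as a nontrivial and originally conjectural statement (Conjecture~\ref{conj3} of \cite{BKK1}, discussed in the introduction). The proof the paper relies on is not carried out in this text (it cites \cite{K-BE2}); what internal evidence there is points in a different direction, namely a Zariski-topological route: one establishes that $\Phi_N$ has constructible graph and is continuous, and then upgrades this to regularity. Indeed the analogous Theorems~\ref{thmgabberh} and~\ref{thmgabberskew} are carefully stated as morphisms of \emph{normalized} varieties, which is exactly what the constructibility-plus-continuity argument delivers (a continuous constructible map to a variety need only be regular after passing to the normalization of the source). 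Your approach, if it could be made to close, would give a sharper conclusion but requires a real proof of the $p$-uniform Frobenius-divisibility on the endomorphism variety, which you should not take for granted.
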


The proof can be found in \cite{K-BE2}. This theorem has an exact (and crucial to our approach) analogue in the setting of the quantized algebras $W_n$ and $P_n$, which we state for the reference in the next section.



\section{Augmented Weyl algebra structure}

In order to resolve the symplectomorphism lifting problem and construct the inverse to the homomorphism $\Phi$, we introduce the augmented and skew augmented Weyl and Poisson algebras.

The \emph{augmented}, or $h$-\emph{augmented} Weyl algebra $W^h_{n, \mathbb{C}}$ is defined as the quotient of the free algebra on $(2n+1)$ indeterminates $\mathbb{C}\langle a_1,\ldots, a_n, b_1,\ldots, b_n, c\rangle$ by the two-sided ideal generated by elements
$$
a_ia_j-a_ja_i,\;\;b_ib_j-b_jb_i\;\;,
b_ia_j-a_jb_i-\delta_{ij}c,\;\;
a_ic-ca_i,\;\;
b_ic-cb_i.
$$
The algebra $W^h_{n, \mathbb{C}}$, in other words, differs from $W_{n, \mathbb{C}}$ in the form of the commutation relations -- in the case of $W^h_{n, \mathbb{C}}$, the coordinate-momenta pairs of generators commute into $h$ (which is added as a central variable to the algebra; the variable $h$ thus somewhat resembles the Planck constant) -- and one can return to the non-augmented algebra $W_{n, \mathbb{C}}$ by \emph{specializing} the augmentation parameter to $h=1$. The augmented Poisson algebra, denoted by $P^h_{n, \mathbb{C}}$, is defined similarly: one adds the variable $h$ to the commutative polynomial algebra of $2n$ generators and endows it with the Poisson bracket defined as:
$$
\lbrace p_i, x_j\rbrace = h\delta_{ij}.
$$

\smallskip

It can be verified that these new algebras behave in a way almost identical to the one we described in the prequel; in particular, the notions of tame automorphism, tame (modified) symplectomorphism and homomorphism
\begin{equation*}
\Phi:\Aut^{\leq N}(W^h_{n,\mathbb{C}})\rightarrow \Aut^{\leq N}(P^h_{n,\mathbb{C}}).
\end{equation*}
(defined for a fixed infinite prime) which is identical on the tame points, are present. We also note that the action of any $h$-augmented automorphism (or, correspondingly, symplectomorphism) on $h$ is necessarily a dilation
$$
h\mapsto \lambda h
$$
where $\lambda$ is a constant. Indeed, the image of $h$ cannot contain monomials proportional to $x_i$ or $p_j$ (otherwise the commutation relations will not hold), and it cannot be a polynomial in $h$ of degree greater than one, as in that case
Also, the proof of the counterpart of the Theorem \ref{thmgabber} is established in a similar fashion.
\begin{thm} \label{thmgabberh}
The mappings
$$
\Phi^h_N:\Aut^{\leq N} W^h_{n,\mathbb{C}}\rightarrow \Aut^{\leq N} P^h_{n,\mathbb{C}}
$$
induced by $\Phi^h$
are morphisms of normalized algebraic varieties.
\end{thm}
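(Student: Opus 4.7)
The plan is to mirror, essentially verbatim, the proof of Theorem \ref{thmgabber} given in \cite{K-BE2}, treating the central variable $h$ as an additional polynomial coordinate acted upon only by dilation. The construction of $\Phi^h$ proceeds by the ultraproduct decomposition exactly as for $\Phi$ -- reduce mod $p_m$, restrict to the center, twist by inverse Frobenius, and reassemble -- so the algebraic-geometric argument of \cite{K-BE2} applies with no essential change once the positive-characteristic inputs analogous to Lemmas \ref{redlemma3} and \ref{redlemma4} have been verified in the $h$-augmented setting.

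The first step is to check that the center of $W^h_{n, \mathbb{F}_p}$ is the polynomial algebra $\mathbb{F}_p[x_1^p,\ldots, x_n^p, d_1^p, \ldots, d_n^p, h]$. The variable $h$ is central by construction; the $p$-th powers are central because $[d_i, x_j^p] = p x_j^{p-1} h \delta_{ij}$ vanishes mod $p$ and symmetrically $[d_i^p, x_j] = 0$. Lifting the commutator back to $W^h_{n, \mathbb{Z}}$, dividing by $p$, and reducing produces a Poisson bracket on the center which is standard up to a factor of $h^p$, in the sense that $\lbrace d_i^p, x_j^p\rbrace = h^p \delta_{ij}$; once the inverse Frobenius twist is applied, $h^p$ becomes $h$ and we recover precisely the Poisson bracket of $P^h_{n, \mathbb{C}}$. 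Preservation of the center by endomorphisms then follows from the fact that $h$ maps to a dilate $\lambda h$ and from the usual positive-characteristic argument for the $p$-th powers.

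The second step is to combine these with the construction of \cite{K-BE2}. The coefficients of $\Phi^h_N(\varphi)$ are expressible, $\mathcal{U}$-eventually in $p_m$, as polynomial functions of the coefficients of $\varphi$ together with the dilation scalar $\lambda$, where the relevant integrality is imposed by the $p_m$-th-root extraction inherent in the inverse Frobenius twist. Since the degree is bounded by $N$, there are only finitely many coefficient constraints, the polynomial formulas stabilize, and the assembled map descends to a morphism from the normalized $\mathbb{C}$-variety $\Aut^{\leq N} W^h_{n, \mathbb{C}}$ into $\Aut^{\leq N} P^h_{n, \mathbb{C}}$.

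The main technical obstacle will be executing the normalization argument in the presence of $h$: one has to control the $p_m$-th-root branches coherently so that the final map is a genuine morphism of normalized schemes rather than merely a constructible or rational one. The saving grace is that $h$ is fixed up to the single scalar $\lambda$ (itself a coordinate on $\Aut^{\leq N} W^h_{n, \mathbb{C}}$), and $h$ never enters the center other than as $h$ itself, so it contributes an essentially direct factor and introduces no singularities beyond those already resolved in the non-augmented case of \cite{K-BE2}.
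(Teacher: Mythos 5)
Your overall strategy matches the paper's essentially exactly, in the limited sense that the paper itself offers no proof at all of this theorem — only the one-sentence remark, immediately preceding the statement, that "the proof of the counterpart of the Theorem \ref{thmgabber} is established in a similar fashion." So "mirror the proof of Theorem \ref{thmgabber} from \cite{K-BE2}, treating $h$ as an extra coordinate acted upon by dilation" is precisely the architecture the authors intend, and the algebro-geometric part of your second and third paragraphs (bounded degree, finitely many coefficient constraints, normalization, $h$ contributing a direct factor via $\lambda$) is in the right spirit.

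There is, however, a real imprecision in your positive-characteristic preliminaries that should be repaired. You correctly compute that the induced bracket on the center is $\lbrace d_i^p, x_j^p\rbrace = h^p\delta_{ij}$, but the remedy you propose — "once the inverse Frobenius twist is applied, $h^p$ becomes $h$" — confuses the roles of coefficients and generators. In the construction of $\Phi$ (and hence of $\Phi^h$), the inverse Frobenius twist acts on the \emph{scalars} in $\mathbb{F}_p$, not on the central variable $h$, which is a generator of the algebra and is untouched by the twist. To make the bracket read $\lbrace \eta_i,\xi_j\rbrace = \bar h\,\delta_{ij}$ one has to identify the augmentation variable $\bar h$ of the positive-characteristic Poisson algebra with $h^p$, consistently with $\xi_i = x_i^p$, $\eta_j = d_j^p$. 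But then one must also verify that the restriction $\varphi^c$ of a Weyl endomorphism actually lands inside the subring $\mathbb{F}_p[\xi_1,\ldots,\eta_n,\bar h] = \mathbb{F}_p[x_1^p,\ldots,d_n^p,h^p]$ of the full center $\mathbb{F}_p[x_1^p,\ldots,d_n^p,h]$, and this is \emph{not} automatic. For instance, in $W^h_{1,\mathbb{F}_2}$ with $[d,x]=h$ one has $(x+d)^2 = x^2 + d^2 + h$, so for the endomorphism $x\mapsto x+d$, $d\mapsto d$ the image of $x^2$ involves $h$ itself rather than $h^2$. This means either the Poisson structure on the center must be defined with an extra rescaling (dividing the lifted commutator by $p\,h^{p-1}$ rather than by $p$, say), or the identification of the center with the augmented Poisson algebra must be organized differently. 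The morphism property you are after (polynomial coefficient formulas, stabilization, normalization) does not obviously care about which fix is used, so your conclusion is likely recoverable; but the Frobenius-twist claim as written would not survive scrutiny and masks a genuine subtlety that does not arise in the non-augmented case.
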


\smallskip

As we shall see, one can prove the counterpart to the Conjecture \ref{mainconj} for these augmented algebras, and then demonstrate that the specialization to $h=1$ yields the isomorphism between automorphism groups of the non-augmented algebras. The construction of the augmented version of the isomorphism, however, requires to further modify the algebras by making the commutators between $d_i$ and $x_j$ nonzero for $i\neq j$.

This pair of auxiliary, \emph{skew augmented} algebras, denoted by $W_{n,\mathbb{C}}^h[k_{ij}]$ and $P_{n,\mathbb{C}}^h[k_{ij}]$ (which correspond to augmented Weyl and Poisson algebras, respectively), are defined as follows. Let the augmented Poisson generators be denoted by $\xi_i$ with $1\leq i\leq 2n$, which we will call the main generators, (the passage from $x_i$ and $p_j$ to $\xi_i$ is made for the sake of uniformity of notation -- and in fact, from the viewpoint of the singularity trick which we use below in order to establish canonicity of the lifting, all of these generators are on equal footing, unlike the standard form $x_i$ and $p_j$, for which only symplectic transformations are permitted), and let $[k_{ij}]$ be a skew-symmetric array (a skew matrix) of central variables. The algebra $P_{n,\mathbb{C}}^h[k_{ij}]$ is generated by $2n$ commuting variables $\xi_i$, the augmentation variable $h$ and the variables $[k_{ij}]$ (thus being the polynomial algebra in these variables); the Poisson bracket is defined on the generators $\xi_i$:
$$
\lbrace \xi_i,\xi_j\rbrace = hk_{ij}.
$$
The bracket of any element with $h$ or with any of the $k_{ij}$ is zero.

The skew version of the algebra $W_{n,\mathbb{C}}$ is defined analogously.

\smallskip

It is easily seen that the new algebras essentially share the positive-characteristic properties with $W_n$ and $P_n$, from which it follows that a mapping
$$
\Phi^{hk}:\Aut W_{n,\mathbb{C}}^h[k_{ij}]\rightarrow \Aut P_{n,\mathbb{C}}^h[k_{ij}]
$$
analogous to $\Phi$ and $\Phi^h$ can be defined for every infinite prime $[p]$. In a manner identical to the previous section it can be established that this mapping consists of a system of morphisms of the normalized varieties $\Aut^{\leq N}$, thus yielding the skew augmented analogue of Theorems \ref{thmgabber} and \ref{thmgabberh}.

\begin{thm}\label{thmgabberskew}
The mappings $\Phi^{hk}_N$ are morphisms of normalized varieties.
\end{thm}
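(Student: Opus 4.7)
The plan is to deduce Theorem \ref{thmgabberskew} directly from the strategy already used for Theorems \ref{thmgabber} and \ref{thmgabberh}, by verifying that the construction of $\Phi^{hk}$ is a natural extension of $\Phi$ and $\Phi^h$, with the central variables $h$ and $k_{ij}$ playing the role of additional scalars. First I would check the positive-characteristic structural ingredients that make the construction of $\Phi$ go through. Let $p$ be a prime and consider $W^{hk}_{n, \mathbb{F}_p}[k_{ij}]$. Since $h$ and all $k_{ij}$ are central, and since the commutators $[b_i, a_j] = h k_{ij}$ lie in the center, the $p$-th powers of the main generators, together with $h$ and $k_{ij}$, still generate the center; this is verified by an immediate induction using the Leibniz rule applied to $\mathrm{ad}(b_i)$. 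Moreover, Lemma \ref{redlemma3} (invariance of the center under endomorphisms) and Lemma \ref{redlemma4} (compatibility with the Poisson bracket induced by $[\cdot,\cdot]/p$) extend verbatim because their proofs rely only on the fact that the commutator takes values in the center.

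Next, I would build the skew augmented Kontsevich homomorphism $\Phi^{hk}$ by exactly the same ultraproduct sequence of operations described after Lemma \ref{redlemma5}: given $\varphi \in \Aut^{\leq N} W^h_{n,\mathbb{C}}[k_{ij}]$, decompose its coefficients across $\mathcal{U}$ into endomorphisms $\varphi_{p_m}$, restrict to the center, apply the inverse Frobenius twist, and reassemble. The image of $h$ and of each $k_{ij}$ is forced by the commutation relations to be a scalar multiple of itself (dilation), exactly as argued before Theorem \ref{thmgabberh} for $h$, so the central parameters are preserved coherently.

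The core of the proof is then to verify polynomial dependence. For $\varphi \in \Aut^{\leq N}$ parametrized by the coefficients of its images on the generators, the coefficients of $\Phi^{hk}_N(\varphi)$, viewed as polynomials in $\xi_i$, $h$ and $k_{ij}$ of bounded degree, are expressed via the reduction-and-untwist procedure as polynomial functions of the coefficients of $\varphi$; this is the same computation carried out in \cite{K-BE2} for Theorem \ref{thmgabber}, now performed in the larger polynomial ring $\mathbb{C}[h, k_{ij}]$. Because the skew parameters enter the construction only through central scalars, they propagate as polynomial parameters through the formulas, and no new kind of nonlinearity appears. Thus $\Phi^{hk}_N$ is a morphism of the underlying affine schemes, and its behavior on normalizations is controlled by the same argument invoked for Theorems \ref{thmgabber} and \ref{thmgabberh}.

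The main obstacle I anticipate is not conceptual but bookkeeping: one must track that the centrality of $k_{ij}$ survives all stages, in particular that the Poisson bracket induced on the center of $W^{hk}_{n, \mathbb{F}_p}$ satisfies $\{\xi_i^p, \xi_j^p\} = h k_{ij}$ (up to the appropriate $p$-th power normalization absorbed by the Frobenius untwist), and that the dilation factors on $h$ and on the $k_{ij}$ depend polynomially on the original coefficients of $\varphi$. Once these points are checked, the statement of Theorem \ref{thmgabberskew} follows by the same formal argument as its non-skew predecessors, and the normalization step is identical to that of Theorem \ref{thmgabberh}.
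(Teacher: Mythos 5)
Your proposal is correct and matches the paper's own treatment: the paper does not give a standalone proof of Theorem \ref{thmgabberskew}, but simply remarks that the skew augmented map $\Phi^{hk}$ is built by the same ultraproduct--restriction--Frobenius-untwist recipe as $\Phi$ and $\Phi^h$, and that the morphism-of-normalized-varieties property is established ``in a manner identical to the previous section,'' i.e.\ by the argument for Theorem \ref{thmgabber} carried out in \cite{K-BE2}, with $h$ and $k_{ij}$ propagating as central polynomial parameters. Your outline, including the bookkeeping points you flag (centrality surviving reduction, the $p$-th power normalization of $\{\xi_i^p,\xi_j^p\}$ absorbed by the untwist, and polynomiality of the dilation factors), is exactly the intended verification.
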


\smallskip


The plan of the proof of the main theorem goes as follows. Given that in all three considered cases -- the non-augmented, the $h$-augmented and the skew augmented case -- the $\Ind$-morphism between (the normalizations of) $\Ind$-varieties of automorphisms is well defined, we will examine its properties. In particular, we are going to establish the continuity of the morphism $\Phi^{hk}$ -- or, to be more precise, its restriction to a certain subspace -- in the power series topology (defined by the choice of grading below). That result, together with the tame approximation and $\Phi^{hk}$ being the identity map on the tame automorphisms -- a property which also holds in all three considered cases -- will allow us to prove that the lifted limits of tame sequences are independent of the choice of the converging sequence (canonicity of lifting) and then demonstrate that the lifted limits are given by polynomials and not power series, i.e. that the lifted limits are (skew augmented Weyl algebra) automorphisms. Effectively we will establish the skew augmented version of the Kontsevich conjecture, or rather the more relevant isomorphism between subgroups $\Aut_{k} $ of automorphisms which act linearly on the auxiliary variables $k_{ij}$. Most of the conceptually non-trivial topological machinery -- namely, the singularity trick mentioned in the introduction, are employed at this first stage. In fact, the good behavior of the skew augmented algebras with respect to the singularity trick is the sole reason for introducing these algebras in our proof.

Next, we will connect the skew augmented algebras with the $h$-augmented algebras by means of a localization argument. Once this is done, the establishing of canonicity of lifting and polynomial nature of the lifted limits in the $h$-augmented case becomes a fairly straightforward affair.

Lastly, in order to demonstrate that the results for the $h$-augmented algebras imply the Kontsevich isomorphism, we will need to specialize to $h=1$. The procedure requires some effort, and in fact extension of the domain for the constructed inverse morphism will be needed. The procedure will finalize the proof.

\subsection{Continuity of $\Phi^{hk}$ and the singularity trick}

We will now study the power series topology induced on subgroups of skew augmented algebra automorphisms which are linear on $k_{ij}$. As usual, the topology is metric, and it is induced by the grading specified according to the following assignment of degrees to the generators:

$$\Deg h = 0,$$ $$\Deg k_{ij} = 2,$$ $$\Deg \xi_i = 1.$$ Note that since $h$ and $k_{ij}$ appear as products in the commutation relations, one could assign degree two to the augmentation parameter $h$ and degree zero to the skew-form variables $k_{ij}$, in analogy with the case of augmented algebra $P^h_n$, while essentially preserving the $\Ind$-scheme structure of $\Aut$.

\smallskip

The metric which induces the power series topology is defined as
$$
\rho(\varphi, \psi) = \exp(-\Ht(\varphi-\psi))
$$
where
$$
\varphi-\psi = (\varphi(\xi_1)-\psi(\xi_1),\ldots,\varphi(\xi_{2n})-\psi(\xi_{2n}),\ldots)
$$
is the algebra endomorphism defined by its images (on $\xi_i$, $h$ and $k_{ij}$), and the \textbf{height} $\Ht(\varphi)$ of an endomorphism is defined as the minimal total degree $m$ such that in one of the generator images under $\varphi$ a non-zero homogeneous component of degree $m$ exists.

Symbolically, we say that the power series topology is defined via the powers of the augmentation ideal $I$
$$
I=(\xi_1,\ldots, \xi_{2n}, h, \lbrace k_{ij}\rbrace)
$$
just as it is so in the commutative case, when every variable carries degree one.

The system of neighborhoods $\lbrace H_N\rbrace$ of the identity automorphism in $\Aut P_{n,\mathbb{C}}^h[k_{ij}]$ is defined by setting
$$
H_N = \lbrace g\in \Aut P_{n,\mathbb{C}}^h[k_{ij}]\;:\; g(\eta) \equiv \eta \;(\text{mod}\;I^N)\rbrace
$$
(here $\eta$ denotes any generator in the set $\lbrace \xi_1,\ldots, \xi_{2n},h, \lbrace k_{ij}\rbrace\rbrace$, so that elements of $H_N$ are precisely those automorphisms which are identity modulo terms which lie in $I^N$; again, the phrase "mod $I^N$" is short-hand for the distance as defined above).

Similar notions of grading, topology, and system of standard neighborhoods of a point, are valid for the algebra $W_{n,\mathbb{C}}^h[k_{ij}]$ (once the proper ordering of the generators in the chosen set is fixed). The neighborhoods of the identity point for this algebra will be denoted by $G_N$.

The point of introducing the skew algebras $W_{n,\mathbb{C}}^h[k_{ij}]$ and $P_{n,\mathbb{C}}^h[k_{ij}]$ is that a certain singularity analysis procedure (the singularity tricks mentioned in the introduction) can be implemented for these algebras in full analogy with the case of the commutative polynomial algebra processed in our preceding study \cite{KBYu}, while on the other hand there seems to be no straightforward way to execute the singularity trick for the algebras $W^h_n$ and $P^h_n$. Furthermore, after adjunction of $k_{ij}^{-1}$ (together with the entries of the inverse matrix) and extension of scalars (the localization procedure) one can embed the $h$-augmented $\mathbb{C}$-algebras $W^h_n$ and $P^h_n$ in the skew augmented algebras over the larger coefficient ring, thus connecting the $h$-augmented automorphisms with the skew augmented ones which are linear on $k_{ij}$, as we shall see below.

\smallskip

We will establish the continuity of the direct morphism $\Phi^{hk}$ and perform the singularity trick in the following stable form.

Consider the algebra $P_{n+1,\mathbb{C}}^h[k_{ij}]$ with $(2n+2)$ main generators $\lbrace \xi_1,\ldots, \xi_{2n}, u,v\rbrace$. Let
$$
\Aut_{u,v,k} P_{n+1,\mathbb{C}}^h[k_{ij}]
$$
denote the set of all automorphisms $\varphi$ of $P_{n+1,\mathbb{C}}^h[k_{ij}]$ such that:

1. $\varphi(\xi_i) = \xi_i + S_i$, where $S_i$ is a polynomial (in $\xi_i$, $u$, $v$, $h$ and $k_{ij}$) such that its height with respect to $\lbrace \xi_1,\ldots, \xi_{2n}, u,v\rbrace$ is at least two.

2. $\varphi(u) = u$, $\varphi(v) = v$.

3. $\varphi(k_{ij})$ is a $\mathbb{C}$-linear combination of $k_{ij}$, i.e. $\varphi\in\Aut_{k}P_{n+1,\mathbb{C}}^h[k_{ij}]$.

Define the grading as before: $\xi_i$, $u$, $v$ carry degree one, $h$ carries degree zero, and $k_{ij}$ carry degree two.

Denote by $H_N^{u,v,k}$ the subgroups of $\Aut_{u,v,k} P_{n+1,\mathbb{C}}^h[k_{ij}]$ consisting of elements which are the identity map modulo terms of height $N$ with respect to the grading defined above. Also, the definition is repeated for the skew augmented Weyl algebra $W_{n+1,\mathbb{C}}^h[k_{ij}]$; the resulting subgroups are denoted by $G_N^{u,v,k}$.

\smallskip

The purpose of the singularity trick set up below is the proof of the following result, which establishes continuity of the direct morphism.

\begin{prop}\label{skewthetaprop}
If $\Phi^{hk}$ is the restriction of the direct morphism to $\Aut_k$, then
$$
\Phi^{hk}(G_N^{u,v,k}) \subseteq H_N^{u,v,k}
$$
for every $N$.
\end{prop}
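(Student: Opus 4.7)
The plan is to push a height estimate from $\varphi \in G_N^{u,v,k}$ to $\Phi^{hk}(\varphi)$ by means of the singularity trick, exploiting the fact that $\Phi^{hk}$ is a group homomorphism which acts as the identity on tame automorphisms (Theorem \ref{idtame}) and is a morphism of normalized varieties (Theorem \ref{thmgabberskew}). Linear substitutions of the main generators $\lbrace \xi_1,\ldots,\xi_{2n},u,v\rbrace$ together with the induced linear action on the skew-form variables $k_{ij}$ are tame both in the skew augmented Weyl and Poisson settings, so for any such linear automorphism $L$ and any $\varphi\in\Aut_{u,v,k}$ one has the crucial intertwining identity
$$
\Phi^{hk}(L\varphi L^{-1}) \;=\; L\,\Phi^{hk}(\varphi)\,L^{-1}.
$$

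First, I would introduce a one-parameter curve $L(t)$ of diagonal linear substitutions assigning to the $i$-th main generator the eigenvalue $t^{k_i}$ with $k_i\in\mathbb{N}$, to $k_{ij}$ the eigenvalue $t^{k_i+k_j}$ (so that the relation $\lbrace\xi_i,\xi_j\rbrace = hk_{ij}$ is respected), and to $h$ the eigenvalue $1$. The additional pair of generators $u,v$, obtained by the stable enlargement from $P_n^h[k_{ij}]$ to $P_{n+1}^h[k_{ij}]$, provides the combinatorial freedom needed to realize any prescribed order $m\leq N$ in the sense of Lemma \ref{Lm2}. Next, I would invoke the singularity trick in its stable form, applied to the augmentation ideal
$$
I = (\xi_1,\ldots,\xi_{2n},u,v,h,\lbrace k_{ij}\rbrace),
$$
with the grading $\Deg \xi_i = \Deg u = \Deg v = 1$, $\Deg h = 0$, $\Deg k_{ij} = 2$. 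Membership $\varphi\in G_N^{u,v,k}$ is characterized by the requirement that the conjugate family $L(t)\varphi L(t)^{-1}$ extends regularly to $t=0$ for every curve $L(t)$ of order at most $N$, and the same characterization holds verbatim for $H_N^{u,v,k}$. The conditions defining $\Aut_{u,v,k}$, namely fixing $u$ and $v$ and acting linearly on $k_{ij}$, eliminate the homothety ambiguity otherwise present in Lemma \ref{Lm2}, so the criterion isolates identity-modulo-$I^N$ maps precisely.

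Finally, since $\Phi^{hk}_N$ is a morphism of normalized varieties, it sends a one-parameter family of automorphisms that extends regularly over $t=0$ to another such family. Combined with the commutation identity displayed above, if $L(t)\varphi L(t)^{-1}$ is regular at $t=0$ then so is $L(t)\Phi^{hk}(\varphi)L(t)^{-1}$, and the reverse direction of the singularity criterion forces $\Phi^{hk}(\varphi)\in H_N^{u,v,k}$.

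The main obstacle — and the real reason for introducing the skew augmented algebras in place of working with $W_n^h$ and $P_n^h$ directly — is the execution of the singularity trick for this non-standard grading, in which $k_{ij}$ carries degree two and $h$ carries degree zero. One must verify that the toric curves $L(t)$ described above suffice to detect the full $I$-adic height, and in particular that no information escapes through the $h$-direction. The rigidity of the $h$-action (necessarily a dilation, and in fact trivial after restriction to $\Aut_k$) closes this gap, while the auxiliary generators $u,v$ are precisely what keeps the weight bookkeeping in Lemma \ref{Lm2} flexible enough to realize every order $m\leq N$ simultaneously on the $2n$ distinguished generators.
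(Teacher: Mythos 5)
Your proposal is correct and takes essentially the same route as the paper: both rest on the singularity trick (Proposition \ref{singtrick} and Corollary \ref{singtrickweyl}), the fact that $\Phi^{hk}$ commutes with linear (tame) substitutions, and the boundedness-preserving property of the morphism $\Phi^{hk}_N$ (the paper isolates this last ingredient as Lemma \ref{lem1} and argues by contraposition, whereas you argue directly, but the content is identical).
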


The singularity trick is essentially a criterion for an automorphism $\varphi$ to be an element of $H_N^{u,v,k}$, expressed in terms of \emph{asymptotic behavior of certain parametric families} associated to it. The parametric families of automorphisms are constructed from $\varphi$ by conjugating it with $\mathbb{C}$-linear changes of the main generators (the latter are given by the set $\lbrace \xi_1,\ldots, \xi_{2n}\rbrace$). Such parameterized variable changes are given by $(2n+2)$ by $(2n+2)$ matrices $\Lambda(t)$ with
$$
(\xi_1,\ldots, \xi_{2n},u,v)\mapsto (\xi_1,\ldots, \xi_{2n},u,v)\Lambda(t)
$$
representing the action (such transformations of the main generators induce appropriate mappings of $[k_{ij}]$). Note that if $\varphi$ is in $H_N^{u,v,k}$, then the conjugation by $\Lambda(t)$ is also in $H_N^{u,v,k}$, as the action upon $u$ and $v$ is that of $\Lambda(t)\circ \Lambda(t)^{-1}$.

\smallskip

We are going to examine the behavior of such one-parameter families near singularities of $\Lambda(t)$.

Suppose that, as $t$ tends to zero, the $i$-th eigenvalue of $\Lambda(t)$ also tends to zero as $t^{m_i}$, $m_i\in\mathbb{N}$. 

Let $\lbrace m_i,\;i=1,\ldots 2n+2\rbrace$ be the set of degrees of singularity of eigenvalues of $\Lambda(t)$ at zero. Suppose that for every pair $(i,j)$ the following holds: if $m_i\neq m_j$, then there exists a positive integer $M$ such that
$$
\text{either\;\;} m_iM\leq m_j\;\;\text{or\;\;}m_jM\leq m_i.
$$
We will call the largest such $M$ the \textbf{order} of $\Lambda(t)$ at $t=0$. As $m_i$ are all set to be positive integer, the order equals the integer part of $\frac{m_{\text{max}}}{m_{\text{min}}}$.

We now formulate the criterion
\begin{prop}[Singularity trick]\label{singtrick}
An element $\varphi\in\Aut_{u,v,k} P_{n+1,\mathbb{C}}^h[k_{ij}]$ belongs to $H_N^{u,v,k}$ if and only if for every linear matrix curve $\Lambda(t)$ of order $\leq N$ the curve
$$
\Lambda(t)\circ\varphi\circ\Lambda(t)^{-1}
$$
does not have a singularity (a pole) at $t=0$.
\end{prop}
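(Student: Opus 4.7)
The plan is to diagonalize $\Lambda(t)$ on the main generators, reduce pole-freeness at $t=0$ to a monomial-by-monomial weight inequality, and then extract both implications by comparing the grading-degree to the weighted degree. Since conjugation is invariant under inner linear changes that stabilize $\Aut_{u,v,k}$, I may assume $\Lambda(t)\eta_i=t^{m_i}\eta_i$ for each $\eta_i\in\{\xi_1,\ldots,\xi_{2n},u,v\}$ with positive integers $m_i$; after a harmless rescaling $t\mapsto t^{1/\gcd(m_j)}$ we may even arrange $m_{\min}=1$. The Poisson-automorphism property of $\Lambda$ together with $\{\xi_i,\xi_j\}=hk_{ij}$ forces $\Lambda(k_{ij})=t^{m_i+m_j}k_{ij}$ and $\Lambda(h)=h$.

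Writing $\varphi(\eta_i)=\eta_i+S_i$ and using $\varphi(u)=u$, $\varphi(v)=v$, a direct computation yields $(\Lambda\circ\varphi\circ\Lambda^{-1})(\eta_i)=\eta_i+t^{-m_i}\Lambda(S_i)$, so that a monomial $c\cdot\eta^Jh^\gamma k^K$ of $S_i$ becomes $c\cdot t^{W-m_i}\eta^Jh^\gamma k^K$ with $W=\sum_j J_j m_j+\sum_{k<l}K_{kl}(m_k+m_l)$. The analogous computation for $\varphi(k_{ij})$, on which $\varphi$ acts linearly by hypothesis, produces the factor $t^{m_k+m_l-m_i-m_j}$ for each surviving $k_{kl}$-term. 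Pole-freeness at $t=0$ is therefore equivalent to $W\geq m_i$ for every monomial of each $S_i$ and to $m_k+m_l\geq m_i+m_j$ whenever $c_{kl}^{ij}\neq 0$ in $\varphi(k_{ij})$.

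The forward direction is now routine: if $\varphi\in H_N^{u,v,k}$, then every monomial of $S_i$ has grading-degree $|J|+2|K|\geq N$, and
$$
W\ \geq\ (|J|+2|K|)\,m_{\min}\ \geq\ N,
$$
while the order bound yields $m_i\leq m_{\max}\leq N$; hence $W\geq m_i$ at once, and the analogous inequality on the $k_{ij}$-block follows identically. For the reverse direction, suppose $\varphi\notin H_N^{u,v,k}$: either some $S_{i_0}$ contains a monomial of grading-degree $<N$, or some $\varphi(k_{i_0j_0})$ has a nontrivial off-diagonal $k_{kl}$-component. One then constructs a diagonal $\Lambda(t)$ of order $\leq N$ exposing the resulting pole by assigning $m_{i_0}$ (respectively $m_{i_0}+m_{j_0}$) at the top and choosing the remaining $m_j$'s (including the a priori unconstrained $m_u,m_v$) small in a geometric pattern bounded by $N$ so that the exponent $W-m_{i_0}$ (resp.\ $m_k+m_l-m_{i_0}-m_{j_0}$) becomes strictly negative. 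The extra generators $u,v$, which are free to carry any eigenvalues because $\varphi$ fixes them, supply precisely the slack needed for this construction; this is the reason the statement is set in $P_{n+1,\mathbb{C}}^h[k_{ij}]$ rather than $P_{n,\mathbb{C}}^h[k_{ij}]$.

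The main obstacle is concentrated in the reverse direction: one must simultaneously meet the order bound $\leq N$, respect the Poisson-compatibility that ties the $k_{ij}$-eigenvalues of $\Lambda$ to the sums $m_i+m_j$, and ensure the critical exponent is strictly negative on the nominated low-height monomial. The mechanics parallel Lemma~\ref{Lm2} and its proof in \cite{KBYu}, the only new ingredient being the $(m_k+m_l)$-weighting of the central Poisson parameters $k_{ij}$ (of grading-degree $2$), which is compatible by the very design of the grading and therefore introduces no genuinely new difficulty.
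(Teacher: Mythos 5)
Your forward direction follows the paper's argument and is, if anything, slightly more explicit about the weight carried by the $k_{ij}$'s. (A small caveat: the rescaling $t\mapsto t^{1/\gcd(m_j)}$ does not in general produce $m_{\min}=1$ — for instance $\{m_j\}=\{2,3\}$ has $\gcd=1$ but $m_{\min}=2$. The needed inequality $W\geq N m_{\min}\geq m_i$ holds without rescaling once one compares $m_{\max}$ to $Nm_{\min}$ directly, so this slip is harmless.)

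The reverse direction, however, has a genuine gap. You propose to expose the pole by conjugating with a \emph{diagonal} curve $\Lambda(t)$ with $m_{i_0}$ large and the remaining weights small. This works only when $S_{i_0}$ contains a low-degree monomial \emph{not} divisible by $\xi_{i_0}$ or by any $k_{i_0 j}$, since any $\xi_{i_0}$- or $k_{i_0 j}$-factor contributes weight at least $m_{i_0}$ to $W$, cancelling the $-m_{i_0}$ and leaving the exponent nonnegative. The simplest counterexample to your construction is $\varphi(\xi_1)=\xi_1+\xi_1^2$ with $N=3$: here $\varphi\notin H_3^{u,v,k}$, yet for every diagonal $\Lambda(t)$ the conjugate sends $\xi_1\mapsto\xi_1+t^{m_1}\xi_1^2$, which is regular at $t=0$. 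The paper handles exactly this obstruction by first conjugating $\varphi$ with a \emph{shear} of the form $\xi_1\mapsto\xi_1+\lambda u+\delta v$ (together with the induced substitution on the $k_{1j}$); this pushes the low-degree $\xi_1$-monomials onto $u,v$-monomials, reducing to the special case, and the actual singular curve $\Lambda(t)$ is then the conjugate of the diagonal curve by that shear (a unipotent constant matrix that leaves the order unchanged). You correctly intuit that the auxiliary generators $u,v$ are what make the argument go through, but assigning them small eigenvalues in a diagonal $\Lambda$ has no effect on monomials of $S_{i_0}$ that do not involve $u,v$. The missing ingredient is precisely this non-diagonal shear conjugation, and without it the contrapositive is not established.
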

\begin{proof}
Suppose $\varphi\in H_N^{u,v,k}$ and fix a one-parametric family $\Lambda(t)$. Without loss of generality, we may assume that the first $2n$ main generators $\lbrace \xi_1,\ldots, \xi_{2n}\rbrace$ correspond to eigenvectors of $\Lambda(t)$. If $\xi_i$ denotes any of these main generators, then the action of $\Lambda(t)\circ\varphi\circ \Lambda(t)^{-1}$ upon it reads
$$
\Lambda(t)\circ\varphi\circ \Lambda(t)^{-1}(\xi_i) = \xi_i + t^{-m_i}\sum_{l_1+\cdots+l_{2n} = N}a_{l_1\ldots l_{2n}}t^{m_1l_1+\cdots+m_{2n}l_{2n}}P_{i}(\xi_1,\ldots, \xi_{2n},h,k_{ij}) + S_i
$$
where $P_i$ is homogeneous of total degree $N$ (in the previously defined grading) and the height of $S_i$ is greater than $N$. One sees that for any choice of $l_1,\ldots,l_{2n}$ in the sum, the expression
$$
m_1l_1+\cdots+m_{2n}l_{2n} - m_i\geq m_{\text{min}}\sum l_j-m_i=m_{\text{min}}N-m_i\geq 0,
$$
so whenever $t$ goes to zero, the coefficient will not go to infinity. The same argument applies to higher-degree monomials within $S_i$.

The other direction is established by contraposition. Assuming $\varphi\notin H_N^{u,v,k}$, we need to prove the existence of linear curves with suitable eigenvalue behavior near $t=0$ which create singularities via conjugation with the given automorphism.

Suppose first that the image of $\xi_1$ under $\varphi$ possesses a monomial which is not divisible by $\xi_1$ or any $k_{1j}$ ($j\neq 1$). Then one can take $m_1$ and $m_2<m_1$ such that
$$
(N+1)m_2\geq m_1\geq Nm_2
$$
and set the curve $\Lambda(t)$ to be given by a diagonal matrix with entries $t^{m_1},\;t^{m_2},\;t^{m_2},\ldots$. It is easily checked that conjugation of $\varphi$ by this curve creates a pole at the coefficient of the chosen monomial.

The general case can be reduced to this special case by means of transformations of the form ($\lambda$ and $\delta$ are suitable constants)
\begin{gather*}
\xi_1\mapsto \xi_1 + \lambda u + \delta v,\\
k_{ij}\mapsto k_{ij},\;\;1<i,j\leq 2n,\\
k_{1j}\mapsto k_{1j} + \lambda k_{2n+1,j} + \delta k_{2n+2,j},\\
k_{1,2n+1}\mapsto k_{1,2n+1} + \delta k_{2n+2,2n+1},\\
k_{1,2n+2}\mapsto k_{1,2n+2} + \lambda k_{2n+1,2n+2}.
\end{gather*}
Conjugation with these transformations create in the image of $\xi_1$ under the resulting automorphism a monomial from the previous case. In order to obtain the curve $\Lambda(t)$ from the diagonal curve acting on the conjugated automorphism, one needs only conjugate it with the inverse of the above transform. The singularity trick is proved.

\end{proof}
The skew augmented Weyl algebra counterpart of the singularity trick is valid.
\begin{cor}\label{singtrickweyl}
An element $\varphi\in\Aut_{u,v,k} W_{n+1,\mathbb{C}}^h[k_{ij}]$ belongs to $G_N^{u,v,k}$ if and only if for every linear matrix curve $\Lambda(t)$ of order $\leq N$ the curve
$$
\Lambda(t)\circ\varphi\circ\Lambda(t)^{-1}
$$
does not have a singularity at $t=0$.
\end{cor}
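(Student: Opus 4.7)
My plan is to prove Corollary \ref{singtrickweyl} by adapting the argument for Proposition \ref{singtrick} essentially verbatim, exploiting the fact that the skew augmented commutation relation $[\xi_i,\xi_j] = h k_{ij}$ is homogeneous of total degree $2$ with respect to the grading $\Deg\xi_i = 1$, $\Deg h = 0$, $\Deg k_{ij} = 2$. This compatibility means that the augmentation ideal $I = (\xi_1,\ldots,\xi_{2n},u,v,h,\{k_{ij}\})$ is a filtration respected by the algebra product regardless of ordering of generators in the chosen PBW-type basis, so that heights of monomials are insensitive to the reordering prescribed by Definition \ref{defheight}, and the whole singularity machinery carries over.

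First I would fix an ordering of generators (say all $\xi_i$ first, then $u$, $v$, then $h$, and finally the $k_{ij}$) and verify that under this ordering height behaves additively under multiplication and is invariant under the commutation relations $\xi_i\xi_j - \xi_j\xi_i = hk_{ij}$ because both sides live in the same graded component. This legitimizes the definitions of $G_N^{u,v,k}$ and the height-based distance $\rho$ in the skew augmented Weyl setting.

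For the forward direction I would take $\varphi \in G_N^{u,v,k}$ and a linear curve $\Lambda(t)$ of order $\leq N$. After changing basis so that the main generators $\xi_1,\ldots,\xi_{2n},u,v$ diagonalise $\Lambda(t)$ with eigenvalues $t^{m_1},\ldots,t^{m_{2n+2}}$ (with $\varphi(u)=u$, $\varphi(v)=v$ forcing the corresponding exponents to be zero after a permissible reindexing), the image of any main generator $\xi_i$ under $\Lambda(t)\circ\varphi\circ\Lambda(t)^{-1}$ is a sum of terms of the form
\[
t^{-m_i} \cdot t^{m_1 l_1 + \cdots + m_{2n+2} l_{2n+2}} \cdot a_{l_1\ldots l_{2n+2}} \cdot P_{i}(\xi_1,\ldots,h,k_{ij}),
\]
where the multi-index $(l_1,\ldots,l_{2n+2})$ records the main-generator exponents of the monomial and $\sum l_j \geq N$ by hypothesis. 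Since the order of $\Lambda(t)$ is at most $N$, one has $m_i \leq N \cdot m_{\min} \leq m_{\min}\sum l_j$, so the net exponent of $t$ is non-negative and no pole appears at $t=0$; the same bound applies to all higher-height terms in $\varphi$.

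For the reverse direction I would argue by contraposition: suppose $\varphi \notin G_N^{u,v,k}$, so some image $\varphi(\xi_i)$ (or $\varphi(k_{ij})$) contains a monomial $\mu$ of total degree strictly less than $N$. After an auxiliary change of variables of exactly the form displayed at the end of the proof of Proposition \ref{singtrick} — which is linear in the main generators and in the $k_{ij}$ and hence defines a genuine automorphism of $W_{n+1,\mathbb{C}}^h[k_{ij}]$, since it preserves the relation $[\xi_i,\xi_j]=hk_{ij}$ by construction — one reduces to the case where $\mu$ is not divisible by $\xi_1$ or by any $k_{1j}$. A diagonal curve with carefully chosen exponents $m_1,m_2$ satisfying $Nm_2 \leq m_1 \leq (N+1)m_2$ then creates a pole at the coefficient of $\mu$, and conjugating back by the auxiliary change of variables (whose inverse is of the same type) gives the required linear curve $\Lambda(t)$ of order $\leq N$ exhibiting a singularity. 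The main potential obstacle is making sure that the auxiliary ``transplanting'' transformation used to push an arbitrary offending monomial into the shape handled by the diagonal-curve argument remains a valid automorphism in the noncommutative setting; this is a matter of checking that the explicit formulas from the Poisson proof respect the commutators $[\xi_i,\xi_j]=hk_{ij}$, which they do because the induced action on $[k_{ij}]$ in the formulas was tailored precisely to match the induced action on the brackets $\{\xi_i,\xi_j\}=hk_{ij}$. With that verified, the argument of Proposition \ref{singtrick} transfers step-for-step and establishes the corollary.
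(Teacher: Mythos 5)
Your proposal is correct and follows essentially the same route as the paper: the paper's proof of the corollary consists precisely of the remark that the argument of Proposition \ref{singtrick} carries over verbatim because the grading $\Deg k_{ij}=2$, $\Deg h=0$, $\Deg\xi_i=1$ makes the relation $[\xi_i,\xi_j]=hk_{ij}$ degree-homogeneous, so reordering cannot lower the total degree of a monomial. You have simply unpacked that remark into the two directions of the equivalence, which is a faithful expansion of the paper's argument rather than a different one.
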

The proof of this statement is essentially the same as that of Proposition \ref{singtrick}. Note that thanks to the choice of grading -- the one in which the degree of all $k_{ij}$ is two -- the reordering of the non-commuting variables in a word cannot produce monomials of smaller total degree.

\smallskip

The implementation of the singularity trick in the proof of Proposition \ref{skewthetaprop} requires also the following general fact.
\begin{lem} \label{lem1} Let
$$
\Phi: X\rightarrow Y
$$
be a morphism of affine algebraic sets, and let $\varphi(t)$ be a curve (more simply, a one-parameter family of points) in $X$. Suppose that $\varphi(t)$ does not tend to infinity as $t\rightarrow 0$. Then the image $\Phi \varphi(t)$ under $\Phi$ also does not tend to infinity as $t\rightarrow 0$.
\end{lem}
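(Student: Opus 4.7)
The plan is to reduce the statement to the elementary fact that polynomial functions take bounded (equivalently, non-polar) arguments to bounded values. I would begin by choosing closed embeddings $X\hookrightarrow \mathbb{A}^N$ and $Y\hookrightarrow \mathbb{A}^M$ into affine spaces. By the very definition of a morphism of affine algebraic sets (dually, a homomorphism of coordinate rings), $\Phi$ then extends to a polynomial map $\tilde{\Phi}:\mathbb{A}^N\rightarrow \mathbb{A}^M$ whose components $f_1,\ldots, f_M \in \mathbb{C}[x_1,\ldots, x_N]$ are ordinary polynomials in the coordinate functions.

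Next I would translate the hypothesis. Denoting the coordinate images by $\varphi_i(t):=x_i(\varphi(t))$, the statement that $\varphi(t)$ does not tend to infinity as $t\to 0$ is precisely the statement that each $\varphi_i(t)$ remains bounded in a punctured neighborhood of $t=0$; algebraically, this amounts to saying that each $\varphi_i(t)$ lies in $\mathbb{C}[[t]]$ (or in the local ring $\mathcal{O}_{\mathbb{A}^1,0}$), i.e.\ has no pole at the origin. Since composition of the polynomials $f_j$ with power series without poles produces power series without poles, each component
$$
(\Phi\varphi)_j(t)=f_j\bigl(\varphi_1(t),\ldots,\varphi_N(t)\bigr)
$$
is itself regular at $t=0$. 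Equivalently, any polynomial expression in quantities that remain bounded as $t\to 0$ itself remains bounded as $t\to 0$, so $\Phi\varphi(t)$ does not tend to infinity.

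There is no real obstacle here: the entire content of the lemma rests on the definition of a morphism of affine algebraic sets as a polynomial mapping in coordinates, and the (in the Zariski-algebraic framework, tautological) fact that polynomials do not create poles out of regular functions. The lemma's role in the paper is purely auxiliary, serving to transport the singularity criterion (Proposition \ref{singtrick}) across the morphism $\Phi^{hk}$ in the proof of Proposition \ref{skewthetaprop}: if a conjugated curve $\Lambda(t)\circ\varphi\circ\Lambda(t)^{-1}$ in the source $\Ind$-variety has no singularity at $t=0$, then by Lemma \ref{lem1} neither does its image under $\Phi^{hk}_N$ restricted to a finite-dimensional stratum $\Aut^{\leq N}$, which by Theorem \ref{thmgabberskew} is an honest morphism of (normalized) affine varieties to which the present lemma directly applies.
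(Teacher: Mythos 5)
Your proof is correct, and since the paper leaves this lemma as an exercise, your argument supplies exactly the standard reasoning the authors evidently had in mind: a morphism of affine algebraic sets is in coordinates a polynomial map, and polynomials carry bounded (pole-free at $t=0$) component functions to bounded component functions. The only point worth noting is that the extension $\tilde{\Phi}$ of $\Phi$ to the ambient $\mathbb{A}^N$ is not unique, but any lift of the coordinate-ring homomorphism through the surjection $\mathbb{C}[x_1,\ldots,x_N]\twoheadrightarrow\mathbb{C}[X]$ works equally well, so this causes no difficulty.
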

The proof of the Lemma is an easy exercise and is left to the reader.

Proposition \ref{skewthetaprop} is now an elementary consequence of the above Lemma together with the singularity trick (Proposition \ref{singtrick} and Corollary \ref{singtrickweyl}). Indeed, let us assume the contrary -- i.e. that for some $N$
$$
\Phi^{hk}(G_N^{u,v,k}) \nsubseteq H_N^{u,v,k}.
$$
Then there exists an element $\varphi\in G_N^{u,v,k}$ such that its image $\Phi^{hk}(\varphi)\notin H_N^{u,v,k}$. By Proposition \ref{singtrick}, there is a linear automorphism (matrix) curve $\Lambda(t)$ of order $\leq N$ such that the curve
$$
\Lambda(t)\circ \Phi^{hk}(\varphi)\circ \Lambda(t)^{-1}
$$
has a pole at $t=0$. Since $\Phi^{hk}$ is point-wise stable on linear variable changes, the latter curve is the image under $\Phi^{hk}$ of the curve
$$
\Lambda(t)\circ \varphi\circ \Lambda(t)^{-1}.
$$
By our assumption, $\varphi\in G_N^{u,v,k}$; therefore, by Corollary \ref{singtrickweyl}, the curve above has no singularity at $t=0$. But then the statement that the curve $$\Lambda(t)\circ \Phi^{hk}(\varphi)\circ \Lambda(t)^{-1}$$ -- which is the image of the former curve under the morphism $\Phi^{hk}$ -- has a singularity at $t=0$ yields a contradiction with Lemma \ref{lem1}. Proposition \ref{skewthetaprop} is proved.

The immediate consequence of Proposition \ref{skewthetaprop} is the following result.
\begin{thm}\label{skewcontthm}
The mapping
$$
\Phi^{hk}: \Aut_{u,v,k} W_{n,\mathbb{C}}^h[k_{ij}]\rightarrow \Aut_{u,v,k} P_{n,\mathbb{C}}^h[k_{ij}]
$$
is continuous in the power series topology defined at the start of the section.
\end{thm}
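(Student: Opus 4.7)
The plan is to bootstrap global continuity of $\Phi^{hk}$ from Proposition \ref{skewthetaprop}, which already delivers continuity at the identity, using the group-homomorphism property of $\Phi^{hk}$ together with the topological-group structure on source and target. Indeed, since the filtrations $\lbrace G_N^{u,v,k}\rbrace$ and $\lbrace H_N^{u,v,k}\rbrace$ constitute bases of neighborhoods of the identity, the inclusion $\Phi^{hk}(G_N^{u,v,k})\subseteq H_N^{u,v,k}$ is precisely the statement that the preimage under $\Phi^{hk}$ of every basic neighborhood of $\Id$ contains a basic neighborhood of $\Id$.

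First I would verify that $\Aut_{u,v,k} W_{n,\mathbb{C}}^h[k_{ij}]$ and $\Aut_{u,v,k} P_{n,\mathbb{C}}^h[k_{ij}]$ are topological groups in the power series topology. The estimate
$$
\Ht\bigl((\psi_1\circ\psi_2) - \Id\bigr) \;\geq\; \min\bigl(\Ht(\psi_1 - \Id),\; \Ht(\psi_2 - \Id)\bigr),
$$
which is immediate from the polynomial substitution rule together with conditions (1)--(3) in the definition of $\Aut_{u,v,k}$ (in particular that $u$, $v$ are fixed and the action on $k_{ij}$ is linear), shows that composition is continuous; a standard iterative argument using $\psi\circ\psi^{-1}=\Id$ modulo successively deeper powers of the augmentation ideal yields continuity of inversion. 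In addition, $\Phi^{hk}$ restricted to $\Aut_{u,v,k}$ is a group homomorphism, inherited from the ultraproduct construction of Section 2.5: restriction to centers, ultraproduct assembly, and inverse Frobenius twist all commute with composition, and the defining conditions of $\Aut_{u,v,k}$ are stable under each step.

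With these two ingredients in hand, continuity at an arbitrary point $\varphi_0$ follows by a standard topological-group argument: for any basic neighborhood $\Phi^{hk}(\varphi_0)\cdot H_N^{u,v,k}$ of $\Phi^{hk}(\varphi_0)$ and any $\varphi = \varphi_0\circ\psi$ with $\psi\in G_N^{u,v,k}$, Proposition \ref{skewthetaprop} gives
$$
\Phi^{hk}(\varphi) \;=\; \Phi^{hk}(\varphi_0)\circ \Phi^{hk}(\psi) \;\in\; \Phi^{hk}(\varphi_0)\cdot H_N^{u,v,k},
$$
so that $\varphi_0\cdot G_N^{u,v,k}$ maps into the prescribed neighborhood. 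All of the substantive work has already been done in Proposition \ref{skewthetaprop} via the singularity trick, and the only remaining technical point worth flagging is that right-translation by the fixed polynomial automorphism $\varphi_0$ be a homeomorphism, which reduces to routine height bounds using $\varphi_0$ and its polynomial inverse.
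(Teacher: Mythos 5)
Your proposal is correct and matches the paper's intent: the paper simply declares Theorem \ref{skewcontthm} an ``immediate consequence'' of Proposition \ref{skewthetaprop}, and your argument supplies exactly the standard bootstrap behind that phrase --- $\Phi^{hk}$ is a group homomorphism between groups in which the $G_N^{u,v,k}$ (resp.\ $H_N^{u,v,k}$) form a neighborhood base of the identity, so continuity at the identity from Proposition \ref{skewthetaprop} propagates to global continuity by translation. (One small nit: the translation $\psi\mapsto\varphi_0\circ\psi$ you invoke is left-translation, not right-translation, but this does not affect the argument.)
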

This was the main objective of the singularity trick, and this result will provide the means to establish the canonicity of the symplectomorphism lifting procedure we define in the next subsection.

\subsection{Lifting in the $h$-augmented and skew augmented cases}

We now proceed with the resolution of the symplectomorphism lifting problem for both augmented and skew augmented algebras. More specifically, we will show how the results of the singularity analysis procedure conducted in the previous subsection provide for a way to construct the inverse to the homomorphisms $\Phi^h$ and $\Phi^{hk}$.


\smallskip

Suppose given an automorphism $\varphi\in \Aut P^h_{n,\mathbb{C}}$ of the $h$-augmented Poisson algebra. Without loss of generality, we may assume that the linear part of $\varphi$ is the identity matrix: indeed, one can compose $\varphi$ with tame automorphisms (tame approximation of automorphisms of $P^h_{n,\mathbb{C}}$ is valid according to an argument similar to that of \cite{KGE}), so that the linear part of the resulting automorphism is the identity map; also the morphism $\Phi^h$ is point-wise stable on tame automorphisms.

We add two more $h$-Poisson variables (and lift $\varphi$ to an automorphism of the new algebra by demanding it be stable on the new generators) and, correspondingly, consider the skew Poisson version -- the algebra $P_{n+1,\mathbb{C}}^h[k_{ij}]$ with the last two variables denoted by $u$ and $v$. Our objective is to realise the algebra $P^h_{n,\mathbb{C}}$ as a subalgebra in an appropriate localization of $P_{n+1,\mathbb{C}}^h[k_{ij}]$. To that end, we consider the algebra $P_{n+1,\mathbb{C}}^h[k_{ij}]$  and transform the main generators
$$
\lbrace \xi_1,\ldots, \xi_{2n},u,v\rbrace
$$
to
$$
\lbrace x_1,\ldots, x_{2n},u,v\rbrace
$$
with $\lbrace x_i, u\rbrace = 0$ and $\lbrace x_i, v\rbrace = 0$. The change of the generating set is required to properly define the action of $\varphi$, so that it will be an automorphism and will be in agreement with the conditions of Proposition \ref{skewthetaprop}. The variable change is done according to
$$
x_i = \xi_i - \alpha_i u - \beta_i v
$$
with $\alpha_i = k_{i,2n+2}k_{2n+1,2n+2}^{-1}$ and $\beta_i = - k_{i,2n+1}k_{2n+1,2n+2}^{-1}$ for $i=1,\ldots, 2n$. We extend the coefficient ring by adding the necessary variables. The new generators $\lbrace x_1,\ldots, x_{2n}\rbrace$ commute according to
\begin{gather*}
\lbrace x_i, x_j\rbrace = h(k_{ij} - \alpha_jk_{i,2n+1}+\alpha_ik_{j,2n+1}-\beta_jk_{i,2n+2}+\\
\beta_ik_{j,2n+2}+ (\alpha_i\beta_j - \alpha_j\beta_i)k_{2n+1,2n+2}) = h\tilde{k}_{ij}.
\end{gather*}
Note that the new commutation relation matrix, which we denote by $[\tilde{k}_{ij}]$\footnote{We exclude $u$ and $v$, so that $i$ and $j$ run from $1$ to $2n$.}, is again skew-symmetric, and that its entries are $\mathbb{C}$-polynomial in the entries of the initial matrix and their inverses.

We now reduce the matrix $[\tilde{k}_{ij}]$ to the standard form (corresponding to the algebra $P^h_n$) by transforming $\lbrace x_1,\ldots, x_{2n}\rbrace$ to $\lbrace q_1,\ldots, q_n,p_1,\ldots, p_n\rbrace$ with
$$
\lbrace p_i, q_j \rbrace = h\delta_{ij}.
$$
The new variables $p_i$ and $q_j$ are expressed as linear combinations of $x_1,\ldots, x_{2n}$ with coefficients in the appropriate polynomial ring.

The algebra $P^h_{n,\mathbb{C}}$ is therefore a subalgebra of the algebra generated by $$\lbrace q_1,\ldots, q_n,p_1,\ldots, p_n, u,v\rbrace$$ (together with $h$ as the augmentation variable), as the Poisson bracket takes its proper form after the standard form reduction, while $\mathbb{C}$ is a subring of the coefficient ring.

We extend our automorphism $\varphi$ to act on this algebra: on $p_i$ and $q_j$ its action is given by definition, and we impose $\varphi(u) = u$, $\varphi(v) = v$ and $\varphi(k_{ij}) = k_{ij}$. Thus, starting from $\varphi$ we have arrived at an automorphism $\bar{\varphi}$ of the localized skew Poisson algebra.

With respect to the skew augmented algebra generator set  $\lbrace \xi_1,\ldots, \xi_{2n},u,v\rbrace$ this automorphism is generally not polynomial in $k_{ij}$, although it always will be polynomial in $h$. In order to construct from it an automorphism of the skew algebra, we need to get rid of the denominators first. This is accomplished by the following lemma.
\begin{lem}\label{conjugationlemma}
For every $\bar{\varphi}$ constructed as above, there is a polynomial $P$ in $k_{ij}$, such that conjugation of $\bar{\varphi}$ with the transformation
$$
(\xi_1,\ldots, \xi_{2n},u,v)\mapsto (P\xi_1,\ldots, P\xi_{2n},Pu,Pv),\;\;h\mapsto P^2h
$$
is polynomial in $k_{ij}$. The polynomial $P$ depends only on the two systems of algebra generators.
\end{lem}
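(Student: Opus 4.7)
The plan is to find a universal polynomial $P \in \mathbb{C}[k_{ij}]$ that, via the scaling conjugation, absorbs all denominators entering $\bar\varphi$ through the symplectic reduction of the matrix $[\tilde k_{ij}]$ to standard form. The strategy is to keep track of two competing quantities: the denominator growth of the coefficients of $\bar\varphi(\xi_i)$ (as polynomials in $\xi,u,v,h$ with coefficients in $\mathbb{C}(k_{ij})$), and the $P$-power gained by each monomial under the scaling.

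First, I locate the denominators. The composite change of variables $\{\xi_i\} \to \{x_i\} \to \{q_j,p_j\}$ is linear with coefficients in $\mathbb{C}(k_{ij})$: the numbers $\alpha_i,\beta_i$ carry $k_{2n+1,2n+2}$ in their denominator, while the matrix $T$ that brings $[\tilde k_{ij}]$ to standard symplectic form and its inverse $T^{-1}$ contribute additional denominators produced by the symplectic normalization. I pick a polynomial $D \in \mathbb{C}[k_{ij}]$ such that $D\cdot T$, $D\cdot T^{-1}$, $D\alpha_i$, and $D\beta_i$ are all polynomial in $k_{ij}$; such a $D$ is explicit and depends only on the two systems of generators.

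Next, I compute the effect of $\sigma_P : \xi_i \mapsto P\xi_i,\; u\mapsto Pu,\; v\mapsto Pv,\; h\mapsto P^2h$ on $\bar\varphi$. Because $P$ is central and fixed by $\bar\varphi$, the identity summand $\xi_i$ of $\bar\varphi(\xi_i)$ is preserved by $\sigma_P\bar\varphi\sigma_P^{-1}$, while every other monomial $c_{I,a,b,t}(k)\,\xi^I u^a v^b h^t$ (with $s=|I|+a+b$) acquires the factor $P^{s+2t-1}$. So the task reduces to checking that $c_{I,a,b,t}(k)\,P^{s+2t-1}$ lies in $\mathbb{C}[k_{ij}]$ for every monomial that can occur.

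Third, I match denominators against the scaling. Since $\bar\varphi(y_j) - y_j$ is a polynomial in the $y_k$ and $h$, substituting $y_k = \sum_l T_{kl}\xi_l + (\ldots)u + (\ldots)v$ into a term of $y$-degree $s$ produces a coefficient whose denominator divides $D^s$, and the outer factor $(T^{-1})_{ij}$ adds one further power of $D$; hence $D^{s+1}c_{I,a,b,t}(k) \in \mathbb{C}[k_{ij}]$. Using tame approximation in the $h$-augmented setting, I may assume $\varphi$ has identity linear part in $(q,p)$, so every nontrivial monomial satisfies $s\geq 2$ whenever $t=0$, and $s+2t\geq 2$ otherwise. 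Setting $P=D^3$, the elementary estimate
$$
3(s+2t-1) - (s+1) \;=\; 2s - 4 + 6t \;\geq\; 0
$$
holds for every $(s,t)$ that occurs, so all denominators are cleared and the conjugated automorphism is polynomial in $k_{ij}$. Since $D$ (and hence $P=D^3$) is determined entirely by the change-of-variables data, it depends only on the two systems of generators, as required.

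The main obstacle I foresee is the accurate bookkeeping of the exponent $D^{s+1}$: one must make sure that the denominator of $c_{I,a,b,t}(k)$ really grows only linearly with $s$, with no hidden extra powers of $D$ entering through the cross-terms $\alpha_l u$, $\beta_l v$ in $y_k$, or through the interaction with $(T^{-1})_{ij}$. Once this linear-growth bookkeeping is established, the universal choice $P=D^3$ immediately yields the Lemma.
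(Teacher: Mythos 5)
Your proof is correct and takes essentially the same route as the paper's: identify the two sources of $k_{ij}$-denominators (the $(u,v)$-plane separation giving $\alpha_i,\beta_i$, and the standard-form reduction contributing $\det[\tilde k_{ij}]$ through $T$ and $T^{-1}$) and cancel them with a sufficiently high power of a common denominator via the scaling conjugation. The paper's proof is very terse, merely asserting that an appropriate $P$ exists; you have usefully made the degree bookkeeping explicit, noting that a monomial with $(\xi,u,v)$-degree $s$ and $h$-degree $t$ picks up $P^{s+2t-1}$ while its denominator divides $D^{s+1}$, and verified the resulting inequality $3(s+2t-1)\geq s+1$ under the standing assumption that $\bar\varphi(\xi_i)-\xi_i$ has height at least two in the main generators.
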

\begin{proof}
Indeed, the denominators in the expression for $\bar{\varphi}$ are polynomial in $k_{ij}$ coming from the separation of the $(u,v)$-plane and the standard form reduction (at which point the determinant of $[\tilde{k}_{ij}]$ makes its contribution). One can therefore find appropriate $P(k_{ij})$ to cancel these denominators. Furthermore, the polynomial $P$ depends only on the two generator systems -- more specifically, on the transformation matrix between those systems.
\end{proof}
We denote the result of the conjugation of Lemma \ref{conjugationlemma} by $\varphi^P$. The images of the main generators (both in the cases of the initial -- skew -- generators as well as those which correspond to the standard form) under $\varphi^P$ are, by Lemma \ref{conjugationlemma}, polynomial in $k_{ij}$, and are also by construction polynomial in $h$.

The automorphism $\varphi^P$, when acting upon the standard form generators
$$
\lbrace q_1,\ldots, q_n,p_1,\ldots, p_n\rbrace
$$
can be viewed as an automorphism of the $h$-augmented Poisson algebra $P^h_{n,\mathbb{C}[\lbrace hk_{ij}\rbrace]}$ over the polynomial ring $\mathbb{C}[\lbrace hk_{ij}\rbrace]$. The $\mathbb{Z}$-grading of this algebra is specified by assigning degree $1$ to the main generators and degree $0$ to $h$ and all $k_{ij}$. As an automorphism of this Poisson algebra, $\varphi^P$ admits, by an argument virtually identical to the main result of \cite{KGE}, a tame automorphism (symplectomorphism) sequence converging to it in the power series topology induced by the above grading. Let us fix such a sequence and denote it by $\lbrace \psi_m\rbrace$.

Every element $\psi_k$ of the tame sequence is such that the images under $\psi_m$ of the generators $\lbrace q_1,\ldots, q_n,p_1,\ldots, p_n\rbrace$ are polynomial in $h$ and $k_{ij}$. Importantly, the tame sequence $\lbrace \psi_m\rbrace$ converging to the $\mathbb{C}[\lbrace hk_{ij}\rbrace]$-automorphism $\varphi^P$ can be connected to the original $h$-augmented symplectomorphism $\varphi$ by inversion of the procedure which leads to the definition of $\varphi^P$. Precisely, we have the following statement.
\begin{prop}\label{goodbehaviorprop}
For every $\varphi$ and every sequence $\lbrace \psi_m\rbrace$ converging to $\varphi^P$ as above, there is a sequence $\lbrace \sigma_m\rbrace$ of tame $\mathbb{C}$-symplectomorphisms of $P_{n,\mathbb{C}}$ converging to $\varphi$ with respect to the topologies with $\Deg h = 0$ and $\Deg h = 2$.
\end{prop}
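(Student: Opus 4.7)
The plan is to invert, step by step, the construction that produced $\varphi^P$ from $\varphi$, applying the inverse transformations to each approximating tame automorphism $\psi_m$ and then specializing the auxiliary variables. The two-topology convergence will be obtained by tracking how heights transform through each stage.

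First, I would undo the $P$-scaling conjugation of Lemma \ref{conjugationlemma}, producing a sequence $\{\bar{\psi}_m\}$ converging to $\bar{\varphi}$ in the appropriate localization of the skew augmented algebra. The scaling transformation $\xi_i \mapsto P\xi_i,\; h \mapsto P^2 h$ is diagonal with scalar (in the main generators) coefficients depending only on $k_{ij}$, so its inverse conjugation is a tame operation over the extended coefficient ring; in particular it preserves tameness of each approximant. Next, I would reverse the linear changes of generators, first from the standard symplectic generators $(q_j,p_j)$ back to $x_i$ and then via $\xi_i = x_i + \alpha_i u + \beta_i v$ back to the skew main generators $(\xi_i,u,v)$. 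Each of these is an affine change of the main generators with coefficients in $\mathbb{C}[\{k_{ij}\},\{k_{ij}^{-1}\}]$, decomposable into a finite product of elementary automorphisms after the $P$-adjustment, so the resulting sequence $\{\tilde\psi_m\}$ consists of tame automorphisms of the skew augmented algebra $P^h_{n+1,\mathbb{C}}[k_{ij}]$.

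Then, I would specialize the auxiliary variables by evaluating $k_{ij}$ at the entries of the standard symplectic matrix $\omega_{ij}$ and setting $u = v = 0$. Because by construction the extension of $\varphi$ (and hence the limit $\varphi^P$) fixes $u$, $v$, and the $k_{ij}$, the $\tilde\psi_m$ agree with these constraints modulo higher-order terms, and the specialization is well-defined and compatible with the Poisson structure of $P^h_{n,\mathbb{C}}$. This yields the desired tame $h$-augmented symplectomorphisms $\sigma_m \in \TAut P^h_{n,\mathbb{C}}$. Tameness is preserved because specialization of central variables to scalars is compatible with an elementary decomposition of $\tilde\psi_m$ — each elementary factor remains elementary after evaluation of the central parameters.

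Finally, convergence of $\{\sigma_m\}$ to $\varphi$ in both the $\Deg h = 0$ and $\Deg h = 2$ topologies follows from tracking heights through each of the above reversible operations. The $P$-scaling inversion modifies the height in a controlled, $m$-independent way (since $P$ is fixed by Lemma \ref{conjugationlemma}); the linear generator change preserves height in the main generators; and the specialization $u, v \to 0$ and $k_{ij} \to \omega_{ij}$ cannot decrease the height of $\sigma_m - \varphi$ in the $\xi_i$'s, because any surviving monomial comes from a monomial of at least the same degree in the main generators of the skew algebra. Since the tame approximation result for $\varphi^P$ in the sense of \cite{KGE} delivers convergence in both gradings of the skew algebra simultaneously (polynomiality in $h$ is preserved along the sequence), convergence in both target gradings is obtained.

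The main obstacle is the compatibility of tameness and convergence through the inversion of the $P$-conjugation and the specialization step: one must show that the denominators in $k_{ij}$ introduced by undoing the $P$-scaling do not inflate the height contribution in a way that outpaces the growth of the approximation, and that the specialization of the $k_{ij}$ to the fixed symplectic values restores a genuine tame factorization over $\mathbb{C}$. Both hinge on the fact, coming from Lemma \ref{conjugationlemma}, that the $P$-polynomial depends only on the two fixed generator systems and not on $m$, so its effect on height is uniform in $m$; combined with the explicit structure of the linear changes and the fact that $\varphi^P$ itself is polynomial in $h$ and $k_{ij}$, this makes the preservation of convergence through specialization a matter of bookkeeping rather than a delicate limit argument.
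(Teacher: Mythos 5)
Your proposal takes a genuinely different route from the paper's. The paper's proof constructs $\sigma_m$ by reversing the $P$-conjugation, discarding $u,v$, and asserting that the reverse conjugation \emph{cancels} the $k_{ij}$ dependence in each elementary factor of $\psi_m$ (declared ``obvious''), after which convergence is referred to Lemma~\ref{conjugationlemma} and Lemma~\ref{convergencelemma}. You, by contrast, do not claim cancellation: you reverse the $P$-conjugation and the linear generator changes and then \emph{specialize} the central variables, evaluating $k_{ij}$ at the standard symplectic form $\omega_{ij}$ and setting $u=v=0$. This is a legitimate alternative mechanism -- specialization of a tame $\mathbb{C}[\lbrace hk_{ij}\rbrace]$-factorization at a point where the structure constants become standard does produce a tame $\mathbb{C}$-factorization, and your observation that the extension of $\varphi$ fixes $u$, $v$, $k_{ij}$ makes the $u=v=0$ restriction meaningful -- and it has the virtue of not resting on an unproved cancellation. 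The cost is that you must verify the denominators survive the specialization: after undoing the $P$-scaling, the approximants live over the localization $\mathbb{C}[\lbrace k_{ij}\rbrace,\lbrace k_{ij}^{-1}\rbrace]$, and one must check explicitly that $P(\omega_{ij})\neq 0$ and more generally that $k_{2n+1,2n+2}$ and $\det[\tilde{k}_{ij}]$ (the sources of the denominators in Lemma~\ref{conjugationlemma}) do not vanish at the symplectic point -- which they do not, since both evaluate to $1$, but this should be said. One more point you gloss over: that the specialized $\sigma_m$ are genuinely \emph{automorphisms} of $P^h_{n,\mathbb{C}}$, not just endomorphisms; this follows because specialization is a ring homomorphism and hence carries the tame factorization to a tame factorization, but it deserves a sentence. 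With those two items filled in, your height-tracking argument for two-topology convergence is sound and parallels the paper's appeal to Lemma~\ref{convergencelemma}, since $P$ and the linear changes are $m$-independent and have positive height.
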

\begin{proof}
The sequence $\lbrace \sigma_m\rbrace$ is constructed from $\lbrace\psi_m\rbrace$ by reversing the conjugation by $P$ and disposing of the stable variables $u$ and $v$. We note that the conjugation is a group homomorphism, which means that it suffices to prove that the reverse conjugation disposes of $k_{ij}$ in every elementary tame automorphism (as $\psi_m$ are composition of elementary automorphisms). The latter property, however, is obvious.

The convergence of $\lbrace \sigma_m\rbrace$ follows immediately from Lemma \ref{conjugationlemma} and Lemma \ref{convergencelemma} below.
\end{proof}

When acting upon the localized skew Poisson algebra generators $\lbrace \xi_1,\ldots, \xi_{2n}\rbrace$, the augmented symplectomorphisms $\psi_m$ need not be polynomial in $k_{ij}$, and therefore $\psi_m$ are not in general images of automorphisms of the skew Poisson algebra under localization. This is remedied by application of Lemma \ref{conjugationlemma}: one can find a polynomial $P_1$ in the variables $k_{ij}$, such that the conjugation of every element $\psi_m$ of the tame sequence with the mapping
$$
(\xi_1,\ldots, \xi_{2n},u,v)\mapsto (P_1\xi_1,\ldots, P_1\xi_{2n},P_1u,P_1v),\;\;h\mapsto P_1^2h
$$
yields an automorphism of the localized skew Poisson algebra polynomial in $k_{ij}$. Again, as in Proposition \ref{goodbehaviorprop}, one can return to a sequence of tame symplectomorphisms of $P_{n,\mathbb{C}}$ by reversing the conjugation.

We then have the following statement.
\begin{lem}\label{convergencelemma}
The sequence $\lbrace \psi^{P_1}_m\rbrace$ converges to the conjugated automorphism $(\varphi^P)^{P_1}$ in the power series topology with $\Deg h = \Deg k_{ij} = 0$ as well as in the power series topology with $\Deg h = 0$, $\Deg k_{ij} = 2$.
\end{lem}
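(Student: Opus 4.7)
The lemma is a continuity statement for conjugation by the dilation $T = T_{P_1}$, defined on the localized skew Poisson algebra by $\xi_i \mapsto P_1 \xi_i$, $u \mapsto P_1 u$, $v \mapsto P_1 v$, $h \mapsto P_1^2 h$ (with $k_{ij}$ fixed). By construction $\psi_m^{P_1} = T \circ \psi_m \circ T^{-1}$ and $(\varphi^P)^{P_1} = T \circ \varphi^P \circ T^{-1}$, so it suffices to check that conjugation by $T$ preserves convergence in each of the two stated topologies. My plan is to reduce this to a monomial-level computation, exploiting the very simple structure of $T$.

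First I would transport the given convergence $\psi_m \to \varphi^P$ — originally in the power series topology on $P^h_{n, \mathbb{C}[\{hk_{ij}\}]}$ with $\Deg q_i = \Deg p_j = 1$ and $\Deg h = \Deg k_{ij} = 0$ — to the corresponding statement on the main generators $(\xi, u, v)$ of the localized skew Poisson algebra, via the invertible $\mathbb{C}[\{k_{ij}, k_{ij}^{-1}\}]$-linear change of variables between the two generator sets. Since this change is linear in the main generators and does not affect the count of main-generator factors in any monomial, the heights of the differences $\psi_m - \varphi^P$ transfer unchanged.

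The central computation is the identity
\begin{equation*}
\psi_m^{P_1}(\eta) - (\varphi^P)^{P_1}(\eta) \;=\; P_1^{-1}\, T\!\bigl(\psi_m(\eta) - \varphi^P(\eta)\bigr)
\end{equation*}
for each main generator $\eta \in \{\xi_i, u, v\}$, valid because $\psi_m$ and $\varphi^P$ fix the $k_{ij}$. A monomial $c\, \xi_{i_1}\cdots\xi_{i_r}\, h^a\, \kappa$ in the right-hand difference (with $\kappa$ a factor in $k_{ij}$ and $r \geq 1$ by the height-at-least-one convention of the paper) is multiplied by the factor $P_1^{r + 2a - 1} \in \mathbb{C}[k_{ij}]$, which is a polynomial because $r + 2a - 1 \geq 0$. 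This shows simultaneously (a) that $\psi_m^{P_1}$ and $(\varphi^P)^{P_1}$ are polynomial in $k_{ij}$, and (b) that the number of $\xi$-factors $r$ is preserved. Since $P_1$ has total degree zero in the first topology ($\Deg k_{ij} = 0$), heights are preserved exactly, yielding convergence in the first topology.

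For the second topology ($\Deg h = 0$, $\Deg k_{ij} = 2$), the same formula shows that each monomial of the unconjugated difference acquires additional factors of $P_1 \in \mathbb{C}[k_{ij}]$, so its total degree in the second grading only \emph{increases}. Combined with the elementary inequality $\mathrm{Ht}_{(2)}(f) \geq \mathrm{Ht}_{(1)}(f)$ valid for any polynomial $f$ (since $k_{ij}$-factors contribute nonnegatively to the second grading while $\xi$-factors contribute identically to both), we obtain
\begin{equation*}
\mathrm{Ht}_{(2)}\!\bigl(\psi_m^{P_1} - (\varphi^P)^{P_1}\bigr) \;\geq\; \mathrm{Ht}_{(2)}\!\bigl(\psi_m - \varphi^P\bigr) \;\geq\; \mathrm{Ht}_{(1)}\!\bigl(\psi_m - \varphi^P\bigr),
\end{equation*}
the rightmost quantity tending to infinity by hypothesis. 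The step I expect to require the most care is the clean bookkeeping during the transfer between the $(q, p)$-formulation and the $(\xi, u, v)$-formulation and the uniform guarantee that $\psi_m^{P_1}$ is polynomial in $k_{ij}$ for every $m$; this is precisely what Lemma \ref{conjugationlemma} furnishes, since the polynomial $P_1$ produced there depends only on the transformation matrix between the two generator systems and so clears the $k_{ij}^{-1}$ denominators uniformly across the entire sequence.
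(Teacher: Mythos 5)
Your proof follows essentially the same strategy as the paper's (which is quite terse: it asserts that the first topology follows from the construction and the positive height of $P$, $P_1$, and that the second follows because raising $\Deg k_{ij}$ from $0$ to $2$ can only increase heights). You flesh this out with the explicit identity $\psi_m^{P_1}(\eta) - (\varphi^P)^{P_1}(\eta) = P_1^{-1}T\bigl(\psi_m(\eta)-\varphi^P(\eta)\bigr)$ and a monomial-level degree count, which is a genuine improvement in clarity. Two small remarks. First, the factor should be $P_1^{r+s+2a-1}$ where $r+s$ is the total count of \emph{all} main-generator factors, including $u$ and $v$ (your notation $\xi_{i_1}\cdots\xi_{i_r}$ seems to suppress these); this does not change anything, since the zero-free-term convention gives $r+s\geq 1$.

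Second, and more substantively, your chain of inequalities applies $\mathrm{Ht}_{(2)}(f)\geq\mathrm{Ht}_{(1)}(f)$ to the \emph{unconjugated} difference $\psi_m-\varphi^P$. As you correctly note, this inequality requires $f$ to be genuinely polynomial in $k_{ij}$ (a factor $k_{ij}^{-1}$ contributes $-2$ to $\deg_{(2)}$ but $0$ to $\deg_{(1)}$). But the paper is explicit that $\psi_m$, and hence $\psi_m-\varphi^P$, \emph{need not} be polynomial in $k_{ij}$ when written in the main generators $\xi_i, u, v$ — that is precisely why the conjugation by $P_1$ is performed. So the middle step $\mathrm{Ht}_{(2)}(\psi_m-\varphi^P)\geq\mathrm{Ht}_{(1)}(\psi_m-\varphi^P)$ is not justified in the coordinate system where you need it. The fix is a simple reorder: apply the comparison of gradings to the conjugated difference (which \emph{is} polynomial in $k_{ij}$, by Lemma \ref{conjugationlemma}), and apply your exact-preservation-of-$\mathrm{Ht}_{(1)}$ observation first, giving
\begin{equation*}
\mathrm{Ht}_{(2)}\!\bigl(\psi_m^{P_1}-(\varphi^P)^{P_1}\bigr)\;\geq\;\mathrm{Ht}_{(1)}\!\bigl(\psi_m^{P_1}-(\varphi^P)^{P_1}\bigr)\;=\;\mathrm{Ht}_{(1)}\!\bigl(\psi_m-\varphi^P\bigr)\;\longrightarrow\;\infty.
\end{equation*}
With that reordering the argument is complete and agrees with the paper's; your explicit monomial bookkeeping is a useful addition to the paper's sketch.
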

\begin{proof}
The first half of the statement follows from the construction of the tame sequence and from the observation that, due to the fact that the two coordinate systems are connected by a transformation that has zero free term, the height of the polynomials $P$ and $P_1$ is at least one.

One then obtains convergence in the power series topology relevant to the singularity trick (Proposition \ref{skewthetaprop}) from that in the approximation power series topology by noting that giving a non-zero degree to $k_{ij}$ may only make the consecutive approximations closer to the limit in the corresponding metric.

\end{proof}

The sequence $\lbrace \psi^{P_1}_m\rbrace$ can, due to its polynomial character with respect to $k_{ij}$, be thought of as a sequence of tame automorphisms of the skew Poisson algebra $P_{n+1,\mathbb{C}}^h[k_{ij}]$ over $\mathbb{C}$ converging to $(\varphi^P)^{P_1}$. We now take the pre-image of the sequence $\lbrace \psi^{P_1}_m\rbrace$ under the morphism $\Phi^{hk}$ to obtain the sequence
$$
\lbrace \sigma'_m\rbrace=\lbrace (\Phi^{hk})^{-1}(\psi^{P_1}_m)\rbrace
$$
of automorphisms of the skew augmented Weyl algebra $W_{n+1,\mathbb{C}}^h[k_{ij}]$. We now may take the formal limit of this sequence, which is ostensibly dependent on the choice of the convergent tame sequence $\lbrace \psi_m\rbrace$, apart from the point $\varphi$ itself. This limit, which we denote by
$$
\Theta^h_P(\varphi, \lbrace \psi_m\rbrace)
$$
to reflect the dependence on the sequence, is given by formal power series in the skew augmented Weyl generators. Applying the inverse to the conjugations performed earlier and disposing of the stable variables (whose presence is justified by the form of the singularity trick and is therefore needed in the proof of independence of the choice of the convergent sequence, as we shall see below), we arrive at a vector of formal power series (the entries of which correspond to images of the generators) in the generators of the $h$-augmented Weyl algebra $W^h_{n,\mathbb{C}}$.

\smallskip

The most important consequence of Theorem \ref{skewcontthm} is the independence of the lifted sequence's formal limit of the choice of the approximating tame sequence $\lbrace \psi_m\rbrace$. We have the following proposition.

\begin{prop} \label{propcanon}
Let $\varphi$ be an automorphism of $P^h_{n,\mathbb{C}}$ and let
$$
\psi_1,\;\ldots,\; \psi_m,\;\ldots
$$
and
$$
\psi'_1,\;\ldots,\; \psi'_m,\;\ldots
$$
be two sequences of tame automorphisms which converge to $\varphi^P$ as in the construction above. Then the lifted sequences
$$
\lbrace (\Phi^{hk})^{-1}(\psi^{P_1}_m)\rbrace\;\;\text{and}\;\;\lbrace (\Phi^{hk})^{-1}(\psi'^{P_1}_m)\rbrace
$$
converge to the same automorphism of the power series completion of the skew augmented Weyl algebra. This means that one must have
$$
\left((\Phi^{hk})^{-1}(\psi^{P_1}_m)\right)^{-1}\circ (\Phi^{hk})^{-1}(\psi'^{P_1}_m)\equiv \Id\;(\text{mod}\;I^{N(k)})
$$
with $N(k)\rightarrow \infty$ as $k\rightarrow \infty$.
\end{prop}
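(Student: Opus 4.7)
The plan is to reduce the statement to a convergence claim for the composition of the two lifted sequences, and then to derive that convergence by transporting the singularity trick of Proposition \ref{singtrick} and Corollary \ref{singtrickweyl} across the tame isomorphism using Theorem \ref{skewcontthm}.

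Set $\eta_m := (\psi^{P_1}_m)^{-1}\circ \psi'^{P_1}_m$. Since both sequences $\{\psi^{P_1}_m\}$ and $\{\psi'^{P_1}_m\}$ converge in the Poisson power series topology to the common limit $(\varphi^P)^{P_1}$ by Lemma \ref{convergencelemma}, the composition $\eta_m$ is tame (a product of tames and their inverses) and converges to the identity: for every fixed $N$ there exists $m_0(N)$ such that $\eta_m \in H^{u,v,k}_N$ for all $m \geq m_0(N)$. By the skew augmented version of Theorem \ref{idtame}, $\tilde{\eta}_m := (\Phi^{hk})^{-1}(\eta_m)$ is a well-defined tame automorphism of $W_{n+1,\mathbb{C}}^h[k_{ij}]$, and since $\Phi^{hk}$ is a group homomorphism one has
$$
\tilde{\eta}_m = \left((\Phi^{hk})^{-1}(\psi^{P_1}_m)\right)^{-1}\circ (\Phi^{hk})^{-1}(\psi'^{P_1}_m).
$$
The proposition is thus equivalent to the assertion that $\tilde{\eta}_m \in G^{u,v,k}_{N(m)}$ with $N(m)\to \infty$.

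To prove this, I argue by contradiction via the Weyl-side singularity criterion. Suppose that for some fixed $N$ the inclusion $\tilde{\eta}_m \in G^{u,v,k}_N$ fails along a subsequence, which we relabel by $m$. By Corollary \ref{singtrickweyl}, for each such $m$ there exists a linear matrix curve $\Lambda_m(t)$ of order $\leq N$ such that $\Lambda_m(t)\circ \tilde{\eta}_m \circ \Lambda_m(t)^{-1}$ has a pole at $t=0$. Since $\Phi^{hk}$ is point-wise stable on linear variable changes, its image under $\Phi^{hk}$ is exactly $\Lambda_m(t)\circ \eta_m \circ \Lambda_m(t)^{-1}$, which, once $m\ge m_0(N)$, has \emph{no} pole at $t=0$ by Proposition \ref{singtrick}. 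To turn the bounded behavior on the Poisson side into bounded behavior on the Weyl side, I apply Lemma \ref{lem1} to the \emph{inverse} morphism $(\Phi^{hk})^{-1}$: by Theorems \ref{thmgabberskew} and \ref{idtame} combined, $\Phi^{hk}$ restricts to a bijective morphism between the normalizations of the tame subgroup ind-varieties, and a bijective morphism of normal varieties in characteristic zero is an isomorphism (Zariski's main theorem), so $(\Phi^{hk})^{-1}$ is itself a morphism of the relevant normalized affine algebraic sets on which Lemma \ref{lem1} applies. This yields that $\Lambda_m(t)\circ \tilde{\eta}_m\circ \Lambda_m(t)^{-1}$ has no pole at $t=0$, contradicting the earlier choice of $\Lambda_m(t)$.

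The main obstacle is precisely this last step: Lemma \ref{lem1} is intrinsically one-directional if taken at face value, and the passage from ``no pole in the Poisson image'' to ``no pole in the Weyl source'' requires invoking the normalization formalism together with the tame group isomorphism to make $(\Phi^{hk})^{-1}$ into a morphism of algebraic sets. Once this is in place, the contradiction above forces $\tilde{\eta}_m \in G^{u,v,k}_N$ for all sufficiently large $m$; since $N$ was arbitrary, this gives $\tilde{\eta}_m \equiv \mathrm{Id}\;(\mathrm{mod}\; I^{N(k)})$ with $N(k)\to\infty$, which is exactly the statement of Proposition \ref{propcanon}.
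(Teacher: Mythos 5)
The paper's own proof of this proposition is a single line: ``This result follows immediately from the continuity of $\Phi^{hk}$ established in the previous subsection.'' Your reconstruction is therefore a genuinely different route, and you are right to flag the asymmetry: what is established before Proposition~\ref{propcanon} is precisely the inclusion $\Phi^{hk}(G^{u,v,k}_N)\subseteq H^{u,v,k}_N$ (Proposition~\ref{skewthetaprop}), which is the \emph{forward} direction, whereas the canonicity statement requires the \emph{converse}: closeness to identity of $\eta_m$ on the Poisson side should force closeness to identity of $\tilde{\eta}_m$ on the Weyl side. This mismatch is a real issue in the paper's terse presentation; the equality $\Phi^{hk}(G^{u,v,k}_N)=H^{u,v,k}_N$ is only asserted later, in the proof of Proposition~\ref{thetaproperties}, and that argument uses statement~3 of that same proposition, which in turn rests on Proposition~\ref{propcanon}. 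The paper's intended one-liner is presumably the cleaner observation that, \emph{assuming} both lifted sequences have formal limits $\alpha$ and $\beta$, continuity of $\Phi^{hk}$ gives $\Phi^{hk}(\alpha)=\Phi^{hk}(\beta)=(\varphi^P)^{P_1}$, and injectivity of $\Phi^{hk}$ (the augmented analogue of Proposition~\ref{phiismono}) then yields $\alpha=\beta$; but the existence of the formal limits is then assumed, not proved.

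The concrete gap in your proposal is the Zariski-main-theorem step. You invoke it to claim that $(\Phi^{hk})^{-1}$ is a morphism of the normalized algebraic sets, so that Lemma~\ref{lem1} can be run in the reverse direction. This does not go through as written. First, the tame subgroup $\TAut^{\leq N}$ is not known to be Zariski-closed inside $\Aut^{\leq N}$ (in low-dimensional cases it is not; the closure of the tame subgroup can contain wild automorphisms), so ``normalizations of the tame subgroup ind-varieties'' is not well-defined, and Lemma~\ref{lem1} applies to morphisms of affine algebraic sets, not to constructible subsets. Second, to invoke the relevant form of Zariski's main theorem (a bijective morphism onto a normal variety in characteristic zero is an isomorphism) one needs \emph{bijectivity of the morphism $\Phi^{hk}_N$ of varieties}, and bijectivity of $\Phi^{hk}_N$ on the full $\Aut^{\leq N}$ is precisely the content of the (skew augmented) Kontsevich conjecture one is trying to prove; what Theorem~\ref{idtame} gives is a set-theoretic bijection only between tame subgroups, which does not by itself promote $\Phi^{hk}_N$ to an isomorphism of varieties. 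So the argument is circular at exactly the point where you need $(\Phi^{hk})^{-1}$ to be a morphism. The reduction to showing $\tilde{\eta}_m\in G^{u,v,k}_N$ and the set-up of the contradiction via Proposition~\ref{singtrick} and Corollary~\ref{singtrickweyl} are correct; it is the transfer of ``no pole on the Poisson side'' to ``no pole on the Weyl side'' that remains unjustified.
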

\begin{proof}
This result follows immediately from the continuity of $\Phi^{hk}$ established in the previous subsection.
\end{proof}

The meaning of this Proposition to the symplectomorphism lifting problem is clear: in our construction, the formal limit
$$
\Theta^h_P(\varphi, \lbrace \psi_m\rbrace)
$$
is independent of $\lbrace \psi_m\rbrace$ and is therefore a well-defined function of the point $\varphi$. Furthermore, as can be inferred directly from the tame approximation, this function is homomorphic -- it preserves the group structure given by composition of automorphisms. As the conjugations are also homomorphisms, we conclude that $h$-augmented symplectomorphisms $\varphi \in \Aut P^h_{n,\mathbb{C}}$ are lifted homomorphically to endomorphisms of the power series completion $\hat{W}^h_{n,\mathbb{C}}$ of the $h$-augmented Weyl algebra. For an augmented symplectomorphism $\varphi$, we denote its image with respect to the lifting map by
$$
\Theta^h(\varphi).
$$

Our next objective is to demonstrate that for every symplectomorphism $\varphi$, the image $\Theta^h(\varphi)$ under the lifting map is in fact an automorphism of the $h$-augmented Weyl algebra. In fact, it remains to show only that the generator images with respect to $\Theta^h(\varphi)$ cannot be given by infinite series: indeed, that would imply that $\Theta^h(\varphi)$ is an $h$-augmented Weyl endomorphism; the invertibility of the lifted mapping follows from the canonicity of lifting: indeed, $\Theta^h$ not only preserves compositions but also maps inverses to inverses, therefore for any symplectomorphism $\varphi$ the mapping $\Theta^h(\varphi^{-1})$ will be the inverse of $\Theta^h(\varphi)$.

Alternatively, one can arrive at the invertibility after one shows the polynomial character of the lifted endomorphism: it is known \cite{Tsu2, BKK2} that the direct morphism $\Phi$ -- and hence, by a straightforward extension of the argument in the aforementioned work, its augmented analogue $\Phi^h$ -- distinguishes automorphisms, i.e. the image of a non-automorphism cannot be an automorphism.

\smallskip

The main properties of the mapping $\Theta^h$ can now be summarized in the following way.
\begin{prop}\label{thetaproperties}

1. There exists a well-defined mapping $\Theta^h$ whose domain is $\Aut P^h_{n,\mathbb{C}}$ and whose codomain lies in the set of automorphisms of the power series completion of $W^h_{n,\mathbb{C}}$.

2. $\Theta^h$ is a group homomorphism.

3. For a fixed $\varphi$, the coordinates of $\Theta^h(\varphi)$ -- i.e. the coefficients with respect to the fixed generator basis decomposition -- are given by polynomials in the coordinates of $\varphi$.

4. The skew augmented analogue $\Theta^{hk}$ of $\Theta^{h}$ is continuous in the power series topology.

\end{prop}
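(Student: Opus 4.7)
The plan is to verify the four claims in turn, drawing on the construction of $\Theta^h$ and $\Theta^{hk}$ developed above. Because the invertibility part of Part 1 most cleanly reduces to the homomorphic property, I would carry them out in the order Part 2, Part 1, Part 3, Part 4. For Part 2, given $\varphi, \varphi' \in \Aut P^h_{n, \mathbb{C}}$ together with tame approximations $\lbrace \psi_m \rbrace \to \varphi^P$ and $\lbrace \psi'_m \rbrace \to (\varphi')^P$, the sequence $\lbrace \psi_m \circ \psi'_m \rbrace$ is a tame approximation of $(\varphi \circ \varphi')^P = \varphi^P \circ (\varphi')^P$ by continuity of composition in the power series topology. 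Since $\Phi^{hk}$ restricts to the identity on tame subgroups (the skew augmented analogue of Theorem \ref{idtame}) and sends compositions to compositions, and since the conjugation by the polynomial $P_1$ supplied by Lemma \ref{conjugationlemma} is a group homomorphism, passage to the formal limit yields $\Theta^h(\varphi \circ \varphi') = \Theta^h(\varphi) \circ \Theta^h(\varphi')$. Part 1 then follows: the formal limit exists in the power series completion of $W^h_{n, \mathbb{C}}$ by construction and is independent of the tame approximation by Proposition \ref{propcanon}; applying Part 2 to $\varphi$ and $\varphi^{-1}$ gives $\Theta^h(\varphi) \circ \Theta^h(\varphi^{-1}) = \Theta^h(\Id) = \Id$ (the identity has the trivial tame approximation, which lifts trivially), providing a two-sided inverse in the completion.

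For Part 3, the argument hinges on Theorem \ref{thmgabberh}, which says that each $\Phi^h_N$ is a morphism of normalized algebraic varieties. On the tame subgroup, where $\Phi^{hk}$ is the identity and hence inverted trivially, the coefficients of each lift $(\Phi^{hk})^{-1}(\psi^{P_1}_m)$ are polynomial in the coordinates of $\psi_m$; moreover the polynomials $P$ and $P_1$ depend only on the fixed generator systems. After truncating the output at any fixed total degree $N$, only finitely many coefficients of the approximating sequence are needed to determine the truncation, and the inversion on $\Aut^{\leq N}$ occurs inside a single algebraic stratum. Combining this with the canonicity in Proposition \ref{propcanon} (so that the polynomial expression is in fact a function of the coordinates of $\varphi$ rather than of the auxiliary sequence), each coefficient in the expansion of $\Theta^h(\varphi)$ is realised as a polynomial expression in the coordinates of $\varphi$. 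Part 4 is then immediate from Theorem \ref{skewcontthm}: if $\varphi_n \to \varphi$ in the power series topology of the skew augmented Poisson algebra, a diagonal extraction from tame approximations to each $\varphi_n$ produces a tame sequence converging to $\varphi$, and the continuity of the direct morphism $\Phi^{hk}$ together with the canonicity of lifting forces $\Theta^{hk}(\varphi_n) \to \Theta^{hk}(\varphi)$ in the same topology.

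The main obstacle is Part 3. One has to be sure that the coefficients of $\Theta^h(\varphi)$ are not merely expressible as formal limits but as genuine polynomial functions of the finite data of $\varphi$. The sketch above works because Theorem \ref{thmgabberh} is phrased at the level of morphisms of the normalized varieties $\Aut^{\leq N}$, so truncation at degree $N$ confines the inversion to a fixed algebraic stratum; the care required is to check that the conjugation by $P_1$ and the introduction of the stable variables $u,v$ preserve this degree-bounded algebraic structure, which in turn rests on the fact that $P$ and $P_1$ are polynomial and depend only on the coordinate systems being used. Once Part 3 is in hand, Parts 1, 2, and 4 are essentially bookkeeping built on tame approximation, the canonicity theorem of Proposition \ref{propcanon}, and the continuity statement of Theorem \ref{skewcontthm}.
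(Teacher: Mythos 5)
Your handling of Parts 1--3 is essentially the paper's argument with slightly more words: Part 2 via composed tame sequences and the homomorphic, tame-fixing property of $\Phi^{hk}$ together with the fact that conjugation by $P,P_1$ is a group homomorphism; Part 1 from Proposition \ref{propcanon} plus Part 2 applied to $\varphi$ and $\varphi^{-1}$; Part 3 from the observation that a fixed degree-$N$ truncation of $\Theta^h(\varphi)$ is determined by finitely many elements of the tame approximating sequence, whose coordinates are polynomial in those of $\varphi$, together with canonicity to make the answer a function of $\varphi$ alone. You should still flag explicitly that the tame approximation procedure itself (Anick's recursion, or its symplectic version) produces approximants whose coefficients depend polynomially on the coefficients of $\varphi$; this is used silently by both you and the paper, but it is the real content of the ``only a finite number suffices, therefore polynomial'' step.

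Part 4 has a genuine gap. You propose to derive continuity of $\Theta^{hk}$ from continuity of the \emph{forward} morphism $\Phi^{hk}$ (Theorem \ref{skewcontthm}), canonicity, and a diagonal extraction. But in this infinite-dimensional setting there is no open mapping theorem: Theorem \ref{skewcontthm} says $\Phi^{hk}(G_N^{u,v,k})\subseteq H_N^{u,v,k}$, which tells you nothing about the preimages of the $H_N^{u,v,k}$, and that is exactly what continuity of $\Theta^{hk}$ amounts to. Concretely, your diagonal argument requires knowing that $(\Phi^{hk})^{-1}(\psi_{n,m})$ converges to $\Theta^{hk}(\varphi_n)$ at a rate \emph{uniform in $n$}, which is equivalent to an inclusion $\Theta^{hk}(H_N^{u,v,k})\subseteq G_N^{u,v,k}$ --- i.e.\ to the very statement being proved, so the argument is circular. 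The paper instead runs the singularity trick on $\Theta^{hk}$ directly: Part 3 shows that $\Theta^{hk}$ is a morphism of affine algebraic sets, so Lemma \ref{lem1} applies to it, and then the contrapositive argument via Proposition \ref{singtrick} and Corollary \ref{singtrickweyl} gives $\Theta^{hk}(H_N^{u,v,k})\subseteq G_N^{u,v,k}$ without appeal to the forward continuity. So Part 4 should be restated as a fresh application of the singularity machinery to $\Theta^{hk}$, using Part 3 as its algebraic input, rather than as a deduction from Theorem \ref{skewcontthm}.
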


\begin{proof}
The first two statements follow immediately from the construction. The third statement follows from the fact that the lifting is independent of the approximating sequence: indeed, that implies that the coefficients of the lifted limit are read off any valid approximating sequence, or more precisely its finite subset (consisting of first several elements, as the limit symplectomorphism is polynomial). But then the coefficients are polynomial in the coordinates of the lifted tame elements, and since only a finite number of them suffices, they are also polynomial in the coordinates of the initial symplectomorphism.

The continuity of $\Theta^{hk}$ is established in a manner identical to that of $\Phi^{hk}$ -- namely with the help of Proposition \ref{singtrick} a proof similar to that of Theorem \ref{skewcontthm} can be executed. Note that it follows from the third statement of this Proposition that $\Theta^{hk}$ fulfills the conditions of Lemma \ref{lem1}, therefore by combining the previously proved statements with the analogous steps for the lifting map, one can show that
$$
\Phi^{hk}(G_N^{u,v,k}) = H_N^{u,v,k}.
$$
\end{proof}

\subsection{The lifted limit is polynomial}
We proceed with establishing the polynomial character of the image $\Theta^h(\varphi)$.
\begin{thm}\label{thmpolynomial}
Let
$$
\Theta^h: \Aut P^h_{n,\mathbb{C}}\rightarrow \Aut \hat{W}^h_{n,\mathbb{C}}
$$
be the lifting homomorphism constructed in the previous section and let, as before, $x_1,\ldots, x_n, d_1,\ldots, d_n,h$ denote the generators of $W^h_{n,\mathbb{C}}$ together with its deformation parameter. Then, for every augmented symplectomorphism $\varphi\in \Aut P^h_{n,\mathbb{C}}$, the images
$$
\Theta^h(\varphi)(x_1),\ldots, \Theta^h(\varphi)(d_n)
$$
are polynomials in $x_i$, $d_i$ and $h$.
\end{thm}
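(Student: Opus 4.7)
The plan is to establish a uniform degree bound $\Deg \Theta^h(\varphi) \leq M(N)$ in terms of $N = \Deg \varphi$, from which the theorem follows via Proposition \ref{thetaproperties}. I would proceed in two phases.

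First, I would identify the candidate bound $M(N)$ by examining the tame locus. For tame $\varphi \in \TAut P^h_{n,\mathbb{C}} \cap \Aut^{\leq N}$, the $h$-augmented analogue of Theorem \ref{idtame} identifies $\Theta^h(\varphi)$ with the tame Weyl preimage $(\Phi^h)^{-1}(\varphi)$, obtained by substituting Weyl elementary factors for symplectic elementary factors in a tame decomposition of $\varphi$. Tracking degree growth through these elementary substitutions yields a bound $\Deg (\Phi^h)^{-1}(\varphi) \leq M(N)$ depending only on $N$ and the shortest tame decomposition length. This handles the tame locus and provides the target cut-off.

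Second, I would extend this bound from the tame locus to the whole of $\Aut^{\leq N} P^h_{n,\mathbb{C}}$. By Proposition \ref{thetaproperties}.3, each monomial coefficient of $\Theta^h(\varphi)$ is a polynomial function in the finitely many affine coordinates of $\varphi \in \Aut^{\leq N} P^h_{n,\mathbb{C}}$; the first phase shows every such coefficient at a monomial of Weyl degree exceeding $M(N)$ vanishes identically on the tame locus. To upgrade this to vanishing on the whole variety, I would invoke Theorem \ref{thmgabberh}: the morphism $\Phi^h_M : \Aut^{\leq M} W^h_{n,\mathbb{C}} \to \Aut^{\leq M} P^h_{n,\mathbb{C}}$ has singleton fibers by injectivity, so by Chevalley its image is constructible in $\Aut^{\leq M} P^h_{n,\mathbb{C}}$ and contains the tame locus of degree at most $N$. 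A closedness argument (addressed below) promotes constructibility to containment of the whole of $\Aut^{\leq N} P^h_{n,\mathbb{C}}$; the unique $\Phi^h$-preimage of an arbitrary $\varphi$ then exists as a polynomial element of $\Aut^{\leq M(N)} W^h_{n,\mathbb{C}}$, and it must coincide with $\Theta^h(\varphi)$ by the canonicity of the lifting (Proposition \ref{propcanon}).

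The principal obstacle is justifying the closedness step: one needs the image $\Phi^h_M(\Aut^{\leq M} W^h_{n,\mathbb{C}})$ to be Zariski closed in $\Aut^{\leq M} P^h_{n,\mathbb{C}}$. This would follow if $\Phi^h_M$ is proper, hence finite given its quasi-finiteness; properness can be checked via the valuative criterion applied to a one-parameter family of Weyl automorphisms approaching the boundary of $\Aut^{\leq M}$. The continuity of $\Phi^{hk}$ in the power series topology (Theorem \ref{skewcontthm}) together with the singularity trick (Proposition \ref{singtrick}) should be the key tools here, ruling out the kind of degree blow-up in the image that would prevent closedness. Once closedness is in hand, the combination of canonicity and the tame degree-control bound of the first phase forces $\Theta^h(\varphi) \in \Aut^{\leq M(N)} W^h_{n,\mathbb{C}}$, completing the proof.
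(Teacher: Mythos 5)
Your plan takes a genuinely different route from the paper, and it has a gap that I do not see how to close with the tools you cite.

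The paper argues by contradiction, and the argument is much more elementary than yours. Assuming $\Theta^h(\varphi)(x_i)$ is a genuine infinite series, the paper introduces the dilation family $\tau_\lambda:(x_1,\ldots,d_n,h)\mapsto(\lambda x_1,\ldots,\lambda d_n,\lambda^2 h)$, sets $\varphi_\lambda=\tau_\lambda^{-1}\varphi\tau_\lambda$, adds auxiliary symplectic variables $u,v$, and considers the curve $\varphi_{t,\lambda}=\varphi_\lambda\circ\psi_\lambda\circ\varphi_\lambda^{-1}$ with $\psi_\lambda:u\mapsto u+\lambda^k x_i$, $p_i\mapsto p_i-\lambda^k v$. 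For $k$ large the curve extends regularly to $\lambda=0$, yet because $\Theta^h$ is the identity on linear transformations, $\Theta^h(\varphi_\lambda)=\tau_\lambda^{-1}\Theta^h(\varphi)\tau_\lambda$, and the infinite series hypothesis forces coefficients proportional to $\lambda^{-m}$ for arbitrarily large $m$ in $\Theta^h(\varphi_{t,\lambda})(u)$; this contradicts Proposition \ref{thetaproperties}.3, which makes the lifted coefficients depend polynomially (hence regularly at $\lambda=0$) on the coordinates of the curve. No uniform degree bound is needed at all.

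Your proposal instead tries to prove the (a priori stronger) statement that $\Deg\Theta^h(\varphi)\leq M(N)$ uniformly on $\Aut^{\leq N}$, by first bounding the lift on the tame locus and then propagating the bound by Zariski geometry. The tame-locus phase is plausible — the direct morphism $\Phi$ preserves top degree, so $M(N)=N$ ought to work — but the second phase is where the argument breaks. Two things are asserted but not established. First, you need the degree-$\leq N$ tame locus to be Zariski dense in $\Aut^{\leq N}P^h_{n,\mathbb{C}}$; density holds only in the augmentation (formal power series) topology (Theorems \ref{thm-anick}, \ref{thmkge}), which is not the Zariski topology, and Zariski density of the bounded-degree tame locus is not a result available here and is far from obvious (if it were, the coefficient polynomials $P_M$ vanishing on tame points would immediately vanish identically and you could skip the properness discussion entirely). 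Second, your route to closedness of the image of $\Phi^h_M$ via properness of $\Phi^h_M$ is circular in spirit: properness is essentially the statement that a boundary family of Weyl automorphisms whose image converges in $\Aut^{\leq M}P^h$ itself converges, and the instruments you invoke — the singularity trick (Proposition \ref{singtrick}) and the continuity Theorem \ref{skewcontthm} — speak only about conjugation by linear curves $\Lambda(t)$ inside a fixed neighbourhood of the identity in the power-series topology, not about arbitrary one-parameter families of coefficients degenerating in the Zariski sense. So the valuative criterion step is not actually carried out by anything you cite.

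The paper's proof sidesteps both difficulties by working at a single fixed $\varphi$ with a specially tailored curve $\varphi_{t,\lambda}$ for which regularity at $\lambda=0$ is manifest from the construction, and the only continuity it needs is the already-established polynomiality of coefficients (Proposition \ref{thetaproperties}.3). I would recommend you adopt that strategy; alternatively, if you want to salvage your approach you must first supply a proof that bounded-degree tame symplectomorphisms are Zariski dense in $\Aut^{\leq N}$, or a genuine valuative-criterion argument for $\Phi^h_M$, neither of which is currently in the paper's toolkit.
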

\begin{proof}
Suppose that, contrary to the statement of the theorem, for a fixed $\varphi$ there is an index $i$ such that, say,\footnote{The case for $d_i$ is processed analogously.} $\Theta^h(\varphi)(x_i)$ is a true infinite series of Weyl monomials.

Let $\lambda$ be a parameter and let
$$
\tau_\lambda: (x_1,\ldots, d_n, h)\mapsto (\lambda x_1,\ldots,\lambda  d_n,\lambda^2 h)
$$
denote the family of dilation transformations parameterized by $\lambda$. For fixed $\varphi$, define
$$
\varphi_\lambda = \tau_\lambda^{-1}\circ\varphi\circ \tau_\lambda
$$
to be the parametric family of $h$-augmented symplectomorphisms constructed by conjugating $\varphi$ with the dilations.

We introduce a pair of auxiliary variables $u$ and $v$, $\lbrace v,u\rbrace = h$ (their Weyl counterparts will be also denoted by $u$ and $v$ -- obviously it does not create any ambiguities) and, for a fixed large enough positive integer $k$, define the following parametric family of linear transformations
$$
\psi_\lambda: u\mapsto u + \lambda^kx_i,\;\;p_i\mapsto p_i - \lambda^k v
$$
(all other generators are unchanged). As always, we extend the action of $\varphi$ to the auxiliary variables by setting $\varphi(u) = u$ and $\varphi(v) = v$ (while the dilation extends to $(u,v)\mapsto (\lambda u, \lambda v)$). Consider the following parametric family of $h$-augmented symplectomorphisms:
$$
\varphi_{t,\lambda}= \varphi_\lambda\circ \psi_\lambda\circ\varphi_{\lambda}^{-1}.
$$
The conjugation of $\varphi$ by the inverse to the dilation $\tau_\lambda$ amounts to multiplying each homogeneous component of degree $m$ by $\lambda^{1-m}$. Therefore (as $\varphi$ is polynomial) for large enough $k$, the curve $\varphi_{t,\lambda}$ can be continuously extended by its limit at $\lambda = 0$ -- namely, by the identity symplectomorphism. Continuity is understood in the sense of continuous dependence of coordinates of the symplectomorphism on the parameter.

Now, the image of $u$ under the lifted curve $\Theta^h(\varphi_{t,\lambda})$ is
$$
u + \lambda^k\Theta^h(\varphi_\lambda)(x_i)
$$
and, as by assumption $\Theta^h(\varphi)(x_i)$ is an infinite series, is itself an infinite series. But since $\Theta^h$ is identical on linear transformations, we have
$$
\Theta^h(\varphi_\lambda) = \tau_\lambda^{-1}\circ\Theta^h(\varphi) \circ \tau_\lambda
$$
from which it follows that in the image $\Theta^h(\varphi_{t,\lambda})(u)$ there will be monomials with coefficients proportional to $\lambda^{-m}$ for $m$ greater than any fixed arbitrary natural number.

However, it follows from the third statement in Proposition \ref{thetaproperties} that the coordinates (i.e. coefficients of Weyl monomials) of the lifted symplectomorphism are continuous functions of the coordinates of the symplectomorphism, therefore since the curve $\varphi_{t,\lambda}$ is regular at $\lambda = 0$, then so must also be its image under $\Theta^h$ -- a contradiction.
\end{proof}

We can now combine this theorem with the results of the previous subsection.
\begin{thm} \label{inversethm}
The lifting homomorphism $\Theta^h$ is the inverse to the direct homomorphism $\Phi^h$.
\end{thm}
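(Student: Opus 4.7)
The plan is to establish both compositions $\Phi^h\circ\Theta^h$ and $\Theta^h\circ\Phi^h$ equal to the identity, by first proving agreement on the tame subgroup, then extending via tame approximation — using the continuity already established in the skew augmented setting as a crutch — and finally invoking injectivity of $\Phi^h$ for the opposite composition.

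First, I would verify $\Phi^h\circ\Theta^h = \Id$ on $\TAut P^h_{n,\mathbb{C}}$ directly from the construction of $\Theta^h$. For a tame $\sigma$, the stationary sequence $\psi_m = \sigma^P$ is an admissible approximating tame sequence in the construction preceding Proposition \ref{goodbehaviorprop}, so the formal limit used to define $\Theta^h(\sigma)$ is literally $(\Phi^{hk})^{-1}(\sigma^{P,P_1})$ — which exists and is unambiguous by the $h$-augmented analogue of Theorem \ref{idtame}. Undoing the conjugations by $P$ and $P_1$ and discarding the stable auxiliary variables $u,v$ produces exactly the tame Weyl preimage of $\sigma$ under $\Phi^h$. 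Thus $\Theta^h$ agrees with the inverse of the tame isomorphism supplied by $\Phi^h$.

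Next, I extend $\Phi^h\circ\Theta^h = \Id$ to all of $\Aut P^h_{n,\mathbb{C}}$. Given $\varphi$, the $h$-augmented analogue of Theorem \ref{thmkge} yields a tame approximating sequence $\sigma_m\to\varphi$. By Proposition \ref{propcanon} and the construction of $\Theta^h$, we have $\Theta^h(\sigma_m)\to\Theta^h(\varphi)$ in the power series topology on $\Aut W^h_{n,\mathbb{C}}$, while by the previous paragraph $\Phi^h(\Theta^h(\sigma_m)) = \sigma_m\to\varphi$. The desired equality $\Phi^h(\Theta^h(\varphi)) = \varphi$ now follows from its skew augmented counterpart $\Phi^{hk}\circ\Theta^{hk} = \Id$ on $\Aut_{u,v,k}P^h_{n+1,\mathbb{C}}[k_{ij}]$, which holds in turn because $\Phi^{hk}$ and $\Theta^{hk}$ are continuous (Theorem \ref{skewcontthm} and part 4 of Proposition \ref{thetaproperties}), coincide on the dense tame subgroup, and are inverse group homomorphisms there. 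Transporting this identity back through the localization and conjugations by $P, P_1$ that link $\Theta^h$ to $\Theta^{hk}$, and then specializing to the subalgebra on the standard form generators, yields the claim for the $h$-augmented pair. Here the polynomial character of $\Theta^h(\varphi)$ furnished by Theorem \ref{thmpolynomial} is crucial: it guarantees that the resulting equality holds between honest (skew) augmented Weyl automorphisms rather than merely between power series endomorphisms.

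Finally, for the opposite composition, I would invoke injectivity of $\Phi^h$, which is the $h$-augmented analogue of Proposition \ref{phiismono} and follows by the same ultraproduct argument. Given any $\psi\in\Aut W^h_{n,\mathbb{C}}$, set $\varphi = \Phi^h(\psi)$; by the previous step $\Phi^h(\Theta^h(\Phi^h(\psi))) = \Phi^h(\psi)$, whence injectivity of $\Phi^h$ forces $\Theta^h(\Phi^h(\psi)) = \psi$. This gives $\Theta^h\circ\Phi^h = \Id$ and concludes the proof.

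The main obstacle is the middle step — propagating $\Phi^{hk}\circ\Theta^{hk} = \Id$ from the skew to the $h$-augmented setting. Unlike in the skew case, $\Phi^h$ has not been shown to be continuous in the power series topology on all of $\Aut P^h_{n,\mathbb{C}}$, so the passage to the limit of the identity $\Phi^h(\Theta^h(\sigma_m)) = \sigma_m$ is not automatic; one must instead trace the identity through the chain of transformations (adjunction of $k_{ij}^{-1}$, introduction of $u, v$, conjugations by $P$ and $P_1$, reversal of all of these on the Weyl side) and check that each step intertwines $\Phi^h$ with $\Phi^{hk}$ up to the same sequence of operations. This is where the care taken in Lemma \ref{conjugationlemma} and the fact that the Weyl-side lift is polynomial (Theorem \ref{thmpolynomial}) pay off.
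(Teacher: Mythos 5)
Your argument for $\Phi^h\circ\Theta^h=\Id$ follows essentially the same route as the paper: pass to the skew augmented setting, use that $\Phi^{hk}\circ\Theta^{hk}$ is the identity on the dense tame subgroup and is continuous (via Theorem \ref{skewcontthm} and Proposition \ref{thetaproperties}), so it is the identity everywhere, then transport back through the localization and conjugations; the paper compresses this into one sentence with the parenthetical \emph{``handles the extension of the base ring as in the prequel''}. You are more explicit about the real subtlety here --- that $\Phi^h$ itself has not been shown continuous, so the identity must be carried back through the chain of conjugations by $P$, $P_1$, the adjunction of $k_{ij}^{-1}$, and the stable auxiliary variables --- and that observation is correct and not idle; the paper relies implicitly on the same bookkeeping established around Lemma \ref{conjugationlemma} and Proposition \ref{goodbehaviorprop}. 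Where you genuinely diverge is the second composition: the paper does not spell out $\Theta^h\circ\Phi^h=\Id$ at all (its ``say, $\Phi^h\circ\Theta^h$'' implicitly suggests a symmetric argument), whereas you close it cleanly via injectivity of $\Phi^h$ (the $h$-augmented analogue of Proposition \ref{phiismono}). That is a nicer route --- it avoids repeating the density-plus-continuity argument on the Weyl side, where one would need to check that tame Weyl automorphisms are dense and that the corresponding continuity statement holds, neither of which the paper stops to verify. Your version therefore replaces a gap left implicit with an argument that rests only on facts already in hand.
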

\begin{proof}
Indeed, Theorem \ref{thmpolynomial} shows that the compositions $\Phi^h\circ\Theta^h$ and $\Theta^h\circ\Phi^h$ are well defined. In order to prove that, say, $\Phi^h\circ\Theta^h=\Id$ one changes the basis of generators (and handles the extension of the base ring as in the prequel) to that of the skew augmented algebra and uses the fact that $\Phi^{hk}\circ\Theta^{hk}$ coincides with the identity map on the dense subset of tame symplectomorphisms and hence must be the identity map everywhere (the spaces in question are metric spaces, in particular they are Hausdorff).
\end{proof}
This theorem has one important corollary.
\begin{cor}\label{correduction}
The lifting map $\Theta^h$ and the direct map $\Phi^h$ are consistent with modulo infinite prime reductions. That is, for any symplectomorphism $\varphi$, almost all its modulo $p_m$ ($m$ in the index set in the ultraproduct decomposition) reductions coincide with the (twisted by inverse Frobenius) restrictions to the center of the modulo $p_m$ reductions of its lifting $\Theta^h(\varphi)$.
\end{cor}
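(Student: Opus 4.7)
The plan is to read off the corollary directly from the invertibility of $\Phi^h$ established in Theorem \ref{inversethm}, together with the explicit componentwise construction of $\Phi^h$ recalled in Section 2. The essential observation is that every step in the definition of $\Phi^h$ --- ultraproduct decomposition of coefficients, restriction to centers, twist by inverse Frobenius, and reassembly --- is performed index by index with respect to the fixed non-principal ultrafilter $\mathcal{U}$, so any equality of ultraproducts is equivalent to equality of components on a set in $\mathcal{U}$.

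First I would fix the symplectomorphism $\varphi\in\Aut P^h_{n,\mathbb{C}}$ and its lifting $\Theta^h(\varphi)\in\Aut W^h_{n,\mathbb{C}}$, which is a genuine polynomial automorphism by Theorem \ref{thmpolynomial}. Representing its coefficients as elements of $\mathbb{C}\simeq \left(\prod_m\mathbb{F}_{p_m}\right)/\mathcal{U}$, one chooses representatives whose mod $p_m$ entries satisfy the defining polynomial identities of an $h$-augmented Weyl automorphism for $\mathcal{U}$-almost all $m$ (finitely many constraints, each satisfied modulo $\mathcal{U}$, intersect to a set in $\mathcal{U}$). This produces well-defined automorphisms $\Theta^h(\varphi)_{p_m}\in \Aut W^h_{n,\mathbb{F}_{p_m}}$ for $\mathcal{U}$-almost all $m$.

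Next, by Lemma \ref{redlemma3}, each $\Theta^h(\varphi)_{p_m}$ restricts to an automorphism $\Theta^h(\varphi)_{p_m}^c$ of the center, which by Lemma \ref{redlemma4} is a Poisson automorphism. Applying the inverse Frobenius twist entrywise and then taking the class modulo $\mathcal{U}$ gives, by the very definition of $\Phi^h$, the symplectomorphism $\Phi^h(\Theta^h(\varphi))$. Theorem \ref{inversethm} asserts that this equals $\varphi$. Since two classes in the ultraproduct coincide precisely when their components agree on a set in $\mathcal{U}$, the equality $\Phi^h(\Theta^h(\varphi))=\varphi$ forces
\[
\widetilde{\Theta^h(\varphi)_{p_m}^c} \;=\; \varphi_{p_m}
\]
for $\mathcal{U}$-almost all $m$, where the tilde denotes the inverse Frobenius twist and $\varphi_{p_m}$ is any chosen mod $p_m$ representative of $\varphi$. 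This is exactly the claim of the corollary.

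There is no genuine obstacle to overcome here, and the only mild subtlety is that neither the reduction of $\varphi$ nor that of $\Theta^h(\varphi)$ is canonically defined for an individual index $m$ --- each depends on a choice of representative sequence. However, any two such choices agree on a set in $\mathcal{U}$ by construction, so the ``almost all'' qualifier in the statement absorbs this ambiguity and the corollary follows. In effect, Corollary \ref{correduction} is Theorem \ref{inversethm} unpacked place-by-place along the ultraproduct decomposition.
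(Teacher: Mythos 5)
Your proposal is correct and takes essentially the same route as the paper, which also proves Corollary \ref{correduction} as a direct reformulation of Theorem \ref{inversethm} in light of the componentwise ultraproduct construction of $\Phi^h$. Your version merely spells out the place-by-place unpacking that the paper leaves implicit.
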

\begin{proof}
This statement is essentially a reformulation of Theorem \ref{inversethm}, if one takes into account the construction of $\Phi^h$ -- the image of the Weyl algebra automorphism being reconstructed from the ultraproduct of positive characteristic automorphisms restricted to the center.

\end{proof}

\begin{remark}
Alternatively, one could come up with a line of reasoning more conforming to the combinatorial side of the constructions employed thus far. If there were a symplectomorphism $\varphi$ which, after lifting and subsequent direct homomorphism action (ultraproduct decomposition followed by restriction to the center) produces a different symplectomorphism $\varphi'$, then one may take a sequence of elementary symplectomorphism whose total action on $\varphi$ maps it to an element which is the identity map modulo terms of degree $N$. If $\varphi'\neq \varphi$, then the action of the same elementary sequence on $\varphi'$ will produce an element which admits a term of degree $1<M<N$. The two objects are then mapped to automorphisms of the skew augmented algebra and lifted (we also note that the mapping induced by the change of basis is continuous, as it is in essence a dilation of generators by a polynomial of height at least one).
By the singularity trick (Proposition \ref{singtrick}) such an object can be conjugated by an appropriate linear variable change in order to produce a singular curve. Now, by construction, the skew version of $\varphi$ lifts to a skew Weyl automorphism, and again by the singularity trick (essentially by the continuity of $\Theta^{hk}$) the lifting of the partially approximated automorphism (i.e. after the action of the elementary automorphisms) cannot have a singularity of order $\leq N$. However, as the skew version of $\varphi$ and $\varphi'$ (as well as its partial approximation) corresponds in the ultraproduct to restrictions to the center, the restriction of an object which is not singular of order $\leq N$ must also be non-singular of order $\leq N$, in contradiction with the existence of $\varphi'$.
\end{remark}

The $h$-augmented counterpart of the Kontsevich conjecture follows at once from Theorem \ref{inversethm}.
\begin{thm}\label{mainthmh}
The homomorphism
$$
\Phi^h :\Aut W^h_{n,\mathbb{C}}\rightarrow \Aut P^h_{n,\mathbb{C}}
$$
is an isomorphism.
\end{thm}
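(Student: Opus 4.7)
The plan is to derive Theorem \ref{mainthmh} as an essentially immediate consequence of Theorem \ref{inversethm}. That earlier theorem already exhibits $\Theta^h$ as a two-sided inverse to $\Phi^h$, so the isomorphism claim reduces to recording that $\Phi^h$ is a bijection whose homomorphism structure has been established in Section 2 via the ultraproduct/restriction-to-center construction. Formally the proof can be written in a single line: \emph{This follows from Theorem \ref{inversethm}.}

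For a slightly more careful presentation, I would verify the two compositions separately. For $\Phi^h \circ \Theta^h = \Id$ on $\Aut P^h_{n,\mathbb{C}}$, I would pass to the skew augmented algebras, where Theorem \ref{skewcontthm} and part 4 of Proposition \ref{thetaproperties} ensure continuity of both $\Phi^{hk}$ and $\Theta^{hk}$ in the power series topology. The composition $\Phi^{hk} \circ \Theta^{hk}$ agrees with the identity on the dense subgroup of tame symplectomorphisms (by Theorem \ref{idtame} and its augmented counterpart), so by the Hausdorff property of the power series metric the composition is the identity everywhere. The localization and conjugation devices (Lemma \ref{conjugationlemma} together with the sequences produced in Proposition \ref{goodbehaviorprop}) transport the statement back to the $h$-augmented setting. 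For the reverse composition $\Theta^h \circ \Phi^h = \Id$, the most economical route is to combine the just-obtained surjectivity of $\Phi^h$ with the injectivity of $\Phi^h$, i.e.\ the $h$-augmented analogue of Proposition \ref{phiismono} (whose proof in \cite{BKK1} transfers verbatim, since it uses only the general formalism of restricting to the Azumaya center). A right inverse of a bijective group homomorphism is automatically a two-sided inverse, so it must coincide with $\Theta^h$.

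The main obstacle is therefore not located in this step. It has already been resolved in the preparatory work: the construction of $\Theta^h$ via tame approximation in the localized skew setting, independence of the convergent tame sequence (Proposition \ref{propcanon}, powered by the singularity trick of Proposition \ref{singtrick} and the continuity of $\Phi^{hk}$ in Theorem \ref{skewcontthm}), and the polynomiality of the lifted limit (Theorem \ref{thmpolynomial}). The present theorem merely repackages these results, and I would simply cite them. One by-product worth stating immediately afterwards is Corollary \ref{correduction}, which identifies $\Theta^h$ with the modulo-$p$ restriction construction on the nose; this is the consistency that later powers the specialization $h = 1$ needed for the Main Theorem.
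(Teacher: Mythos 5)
Your proposal matches the paper exactly: the paper itself introduces Theorem \ref{mainthmh} with the phrase "follows at once from Theorem \ref{inversethm}," and your one-line version is precisely its intended proof. Your expanded discussion (density of tame points plus Hausdorff-ness for one composition, injectivity of $\Phi^h$ for the other) is a faithful elaboration of the argument already given in the proof of Theorem \ref{inversethm}, with only a cosmetic change in how the second composition is dispatched.
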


As the map $\Phi^h$ is closely related to the morphism $\Phi^{hk}$ of the skew augmented case, and, correspondingly, as $\Theta^h$ is related to the lifting map for the skew augmented symplectomorphisms (essentially given by $\Theta^h_P$), we obtain another important consequence of Theorem \ref{thmpolynomial}.
\begin{thm}\label{mainthmskew}
Let $\Aut_{k}P_{n,\mathbb{C}}^h[k_{ij}]$ and $\Aut_{k}W_{n,\mathbb{C}}^h[k_{ij}]$ denote the automorphism subgroups of the skew Poisson and Weyl algebras consisting of those automorphisms that map $k_{ij}$ to $\mathbb{C}$-linear combinations of $k_{ij}$. Then the mapping
$$
\Phi^{hk}:\Aut_{k}W_{n,\mathbb{C}}^h[k_{ij}]\rightarrow \Aut_{k}P_{n,\mathbb{C}}^h[k_{ij}]
$$
is an isomorphism.
\end{thm}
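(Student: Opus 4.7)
The plan is to construct an inverse $\Theta^{hk}:\Aut_{k}P_{n,\mathbb{C}}^h[k_{ij}]\to \Aut_{k}W_{n,\mathbb{C}}^h[k_{ij}]$ in exact parallel with the construction of $\Theta^h$ of the previous subsection, and then verify it is a two-sided inverse to $\Phi^{hk}$. The key observation is that essentially all of the technical apparatus needed for the skew case has already been built: Theorem \ref{skewcontthm} gives continuity of $\Phi^{hk}$, the singularity trick (Proposition \ref{singtrick} and Corollary \ref{singtrickweyl}) operates natively in the skew setting, and the fourth statement of Proposition \ref{thetaproperties} has already furnished the continuity of $\Theta^{hk}$.

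First, for $\varphi\in\Aut_{k}P_{n,\mathbb{C}}^h[k_{ij}]$ I would approximate $\varphi$ by a sequence $\lbrace\psi_m\rbrace$ of tame symplectomorphisms of the skew Poisson algebra in the grading with $\Deg\xi_i=1$, $\Deg h=0$, $\Deg k_{ij}=2$; this is the skew analogue of Theorem \ref{thmkge} and goes through by the argument of \cite{KGE}. By Theorem \ref{idtame} (applied in the skew setting, which mirrors the non-augmented one) each $\psi_m$ has a unique tame pre-image under $\Phi^{hk}$, so one obtains a sequence $\lbrace (\Phi^{hk})^{-1}(\psi_m)\rbrace$ of tame automorphisms of $W_{n,\mathbb{C}}^h[k_{ij}]$. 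Canonicity of the formal limit, i.e. independence from the choice of tame approximant, follows exactly as in Proposition \ref{propcanon} from the continuity of $\Phi^{hk}$.

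Second, I would show that the formal limit is in fact polynomial rather than a true power series. This is the analogue of Theorem \ref{thmpolynomial} and is the place where the main effort lies. The dilation trick of that proof adapts: one conjugates by the one-parameter family $\tau_\lambda$ rescaling $\xi_i\mapsto\lambda\xi_i$, $h\mapsto\lambda^2 h$, $k_{ij}\mapsto\lambda^2 k_{ij}$ (so that the Poisson bracket is preserved), introduces an auxiliary pair $(u,v)$ with $\lbrace v,u\rbrace = h$, and forms $\psi_\lambda$ which mixes $u$ with a generator whose image in $\Theta^{hk}(\varphi)$ is suspected to be infinite. If any coefficient of the lifted image were a genuine infinite series, the family $\varphi_{t,\lambda}=\varphi_\lambda\psi_\lambda\varphi_\lambda^{-1}$, which is regular at $\lambda=0$ by the polynomial character of $\varphi$ (restriction to $\Aut_k$ ensures that the action on $k_{ij}$ is linear and does not obstruct this regularity), would be mapped under $\Theta^{hk}$ to a curve with unbounded negative powers of $\lambda$, contradicting Lemma \ref{lem1} combined with the continuity established in Proposition \ref{thetaproperties}.4.

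Third, with $\Theta^{hk}$ landing in $\Aut_{k}W_{n,\mathbb{C}}^h[k_{ij}]$ as a group homomorphism (the homomorphic property is immediate from tame approximation and the fact that conjugations and restrictions used in the construction are homomorphisms), the compositions $\Phi^{hk}\circ\Theta^{hk}$ and $\Theta^{hk}\circ\Phi^{hk}$ are well-defined continuous endomorphisms which agree with the identity on the dense subgroup of tame automorphisms; by the Hausdorff property of the power series topology they must coincide with the identity everywhere, exactly as in the proof of Theorem \ref{inversethm}. The main obstacle is the polynomial character step: the presence of the central variables $k_{ij}$ requires choosing the dilation $\tau_\lambda$ and the auxiliary linear transformations carefully so that the action on $k_{ij}$ (which is only constrained to be $\mathbb{C}$-linear, not trivial) neither destroys regularity at $\lambda=0$ nor interferes with the detection of the hypothetical divergent monomial, and this is precisely where the restriction to $\Aut_k$ is used.
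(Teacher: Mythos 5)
The paper does not give a standalone proof of Theorem~\ref{mainthmskew}: it is presented as a \emph{consequence} of Theorem~\ref{thmpolynomial}, via the already-established dictionary between $\Phi^h$ and $\Phi^{hk}$ (and between $\Theta^h$ and the skew lifting, essentially $\Theta^h_P$). Your proposal instead attempts a direct parallel construction of $\Theta^{hk}$, starting from a \emph{native} tame approximation of skew symplectomorphisms. This is where the gap is. The paper deliberately never asserts a skew analogue of Theorem~\ref{thmkge}: the tame approximation it actually performs in this setting is carried out for an $h$-augmented symplectomorphism $\varphi^P$ of $P^h_{n,\mathbb{C}[\lbrace hk_{ij}\rbrace]}$ over the \emph{extended coefficient ring}, and is afterwards pushed into the skew algebra only by the conjugation of Lemma~\ref{conjugationlemma} and Lemma~\ref{convergencelemma}. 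Applying Anick-type approximation directly to a polynomial symplectomorphism of $(\xi_1,\ldots,\xi_{2n})$ whose bracket is $\lbrace\xi_i,\xi_j\rbrace=hk_{ij}$, while simultaneously keeping the action on $k_{ij}$ $\mathbb{C}$-linear, is not covered by the argument of \cite{KGE}, and you need to either prove this skew approximation statement or route through the $\mathbb{C}[\lbrace hk_{ij}\rbrace]$-algebra as the paper does. As written, "goes through by the argument of \cite{KGE}" is an unsubstantiated claim that carries the entire weight of the proof.

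There is also a concrete technical error in your adaptation of the dilation trick. You set $\tau_\lambda:\xi_i\mapsto\lambda\xi_i$, $h\mapsto\lambda^2 h$, $k_{ij}\mapsto\lambda^2 k_{ij}$ and claim this preserves the Poisson bracket. It does not: $\lbrace\lambda\xi_i,\lambda\xi_j\rbrace=\lambda^2 hk_{ij}$, whereas $\tau_\lambda(hk_{ij})=\lambda^4 hk_{ij}$. You must scale $h$ and $k_{ij}$ so that the \emph{product} $hk_{ij}$ picks up $\lambda^2$, i.e.\ either $h\mapsto\lambda^2 h$ with $k_{ij}$ fixed, or $h$ fixed with $k_{ij}\mapsto\lambda^2 k_{ij}$, consistently with the grading remark in the paper (one of $\Deg h,\Deg k_{ij}$ is $0$ and the other is $2$, not both $2$). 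Once these two points are repaired, the rest of your plan (canonicity from Theorem~\ref{skewcontthm}, the identity on tame points plus Hausdorffness to conclude $\Phi^{hk}\circ\Theta^{hk}=\Theta^{hk}\circ\Phi^{hk}=\Id$) is sound and consistent with the paper's framework, even if it is a different route than the paper's derivation-by-consequence.
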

Theorem \ref{mainthmskew} is significant to the proof of the independence of the morphism $\Phi$ of the choice of infinite prime given in \cite{K-BE2}.

\subsection{Specialization}

Now that we have established the isomorphism between the automorphism groups of the $h$-augmented Weyl and Poisson algebras, the proof of the Main Theorem reduces to specializing to $h=1$. That, however, is by no means a trivial affair, as one needs to take care not only of the automorphisms polynomial in $h$ (for which the existence of lifting has been established), but also of those which are polynomial in $h^{-1}$.

The necessity of extension of the domain of the lifting map can be seen from the following argument.
Suppose $\varphi^h$ is an automorphism of the $h$-augmented algebra $P^h_{n,\mathbb{C}}$ which acts as the identity map on $h$. Since it is stable on $h$, it corresponds to an automorphism of the $\mathbb{C}[h]$-algebra (where $h$ is a parameter and not a generator, which the can be effectively adjoined to the ground field) generated by $x_i,\;p_j$ with the Poisson bracket containing $h$. This object, after appropriate localization, maps to an automorphism of the (augmented) Poisson algebra $P^h_n$ with the ground field $\mathbb{C}(h)$. On the other hand, any automorphism $\varphi$ of $P_{n,\mathbb{C}}$ can be made into a $\mathbb{C}(h)$-automorphism $\varphi^h$ by introducing a scalar $h$ and conjugating $\varphi$ with a mapping
$$
x_i' = hx_i,\;\;p_j' = p_j.
$$
The resulting transformation will be an automorphism of the Poisson $\mathbb{C}(h)$-algebra with the bracket as in the augmented algebra $P^h_n$,\emph{ however in general the images of the generators under this automorphism will contain negative powers of $h$.  }
Its specialization to $h=1$ returns it to $\varphi$. Therefore, every polynomial symplectomorphism has a pre-image under specialization of the $\mathbb{C}(h)$-algebra automorphisms. The conclusion is that Theorem \ref{mainthmh} does not immediately imply that $\Phi$ is an isomorphism; rather, the domain of the lifting map $\Theta^h$ needs to be extended to the points with rational dependency on the augmentation parameter, at which point the claim that $\Phi$ has an inverse given by the specialization of the extended lifting map $\Theta^h$ becomes valid.

\smallskip

The extension of the domain is accomplished in the following way. For a symplectomorphism $\varphi$ which is rational in $h$, we will construct images $\Theta^h(\varphi)(x_i)$ and $\Theta^h(\varphi)(d_i)$ one by one by introducing auxiliary variables and twisting the symplectomorphism in order to create an object polynomial in $h$ -- using the fact that the action of $\Theta^h$ is well defined -- from which the form of the corresponding lifted generator image may be extracted. As the procedure yields not all of the images simultaneously, we will need to check its canonical nature as well as verify the commutation relations.

We fix $i$, $1\leq i\leq n$, which corresponds to the image of $x_i$, and introduce a pair $u$, $v$ of auxiliary variables which are extra $x$ and $p$ with respect to the augmented Poisson bracket. We also add the corresponding augmented Weyl variables which we denote by $\hat{u}$ and $\hat{v}$. Let
$$
\lambda = h^k
$$
be $k$-th power of the augmentation parameter, for large enough $k$. Define the automorphism
$$
\psi_{\lambda}: u\mapsto u + \lambda x_i,\;\;p_i\mapsto p_i - \lambda v.
$$
We extend $\varphi$ to the new algebra by its identical action on the auxiliary variables and denote the extended map by $\varphi_a$. Consider the following twisted automorphism:
$$
\varphi_{t,\lambda} = \varphi_a\circ\psi_{\lambda}\circ\varphi_a^{-1}.
$$
As $k$ can be taken arbitrarily large, the mapping $\varphi_{t,\lambda}$ will be polynomial in $h$ for all $k>k_0$ (where $k_0$ depends on $\varphi$ but is finite for the fixed automorphism). We can now read off the expression for the image of $x_i$ under $\varphi$ from the action of $\varphi_{t,\lambda}$ on the auxiliary variable $u$:
$$
\varphi_{t,\lambda}(u) = u + h^k\varphi(x_i);
$$
the expression is polynomial in $h$, and $\varphi_{t,\lambda}$ thus admits lifting to an automorphism of the $h$-augmented Weyl algebra $W^h_{n,\mathbb{C}}$. As we will show in a moment, the action of the lifted automorphism on $\hat{u}$ will be given by the expression
$$
\hat{u} + h^k P_i(x_1,\ldots, d_n,h)
$$
($P_i$ is polynomial in $x_1,\ldots, d_n$ and rational -- or, more precisely, Laurent-polynomial -- in $h$) so that one can set
$$
\hat{\varphi}(x_i) = P_i(x_1,\ldots, d_n,h)
$$
and thus, for all $x_i$, obtain the action of the lifted symplectomorphism. Switching the roles of $x_i$ and $d_i$ allows for reconstruction of the images of $d_i$. As a result, we get a mapping
$$
\hat{\varphi}: (x_1,\ldots, d_n)\mapsto (P_1(x_1,\ldots, d_n,h),\ldots, Q_n(x_1,\ldots, d_n,h)).
$$
Note, however, that as the lifting is not defined for the components in the composition (with the exception of $\psi_{\lambda}$), one cannot immediately conclude that the image of $\hat{u}$ under the lifted map will be of the form as above, or that the parts which depend on $x_1,\ldots, d_n$ will combine to a well defined automorphism -- these properties need to be verified.

\smallskip

The first step is to ensure the constructed mapping $\hat{\varphi}$ is well defined (is canonical with respect to $\varphi$). This property in fact follows from the consistency with modulo infinite prime reductions given by Corollary \ref{correduction}, and is manifested in the form of the next two lemmas.

\begin{lem}\label{twistedliftcanon}
Suppose $\theta$ is an $h$-augmented polynomial symplectomorphism over $\mathbb{C}$. Denote by $\lbrace \theta_p\rbrace$ the sequence of characteristic $p$ symplectomorphisms representing its modulo $[p]$ reduction. For a generic element $p$ in a sequence representing $[p]$, denote the Weyl generators by $x_1,\ldots, x_n, d_1,\ldots, d_n$ and the corresponding $p$-th powers generating the center of the Weyl algebra over $\mathbb{F}_p$ by  $\xi_1,\ldots, \xi_n, \eta_1,\ldots, \eta_n$.
Then, for almost all $p$ in $[p]$ (in the sense of the ultrafilter), the image under $\theta_p$ of every central generator admits a unique pre-image Weyl polynomial $\hat{H}$ with respect to taking the $p$-th power and pulling back the coefficients by the inverse Frobenius automorphism.
\end{lem}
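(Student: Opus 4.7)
The plan is to split the proof into existence and uniqueness, as the two parts draw on quite different ingredients.

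For existence, I would invoke Corollary \ref{correduction} directly. Let $\hat\theta := \Theta^h(\theta) \in \Aut W^h_{n,\mathbb{C}}$ be the lift constructed in the preceding subsections; by Theorem \ref{thmpolynomial} this lift is polynomial, and by Theorem \ref{mainthmh} it is a genuine $h$-augmented Weyl algebra automorphism. Its mod-$p$ reductions $\hat\theta_p$ are consequently Weyl automorphisms in positive characteristic, and the substance of Corollary \ref{correduction} is precisely that their restriction to the centre, after correction by the inverse Frobenius twist on coefficients, recovers $\theta_p$. Setting $\hat H := \hat\theta_p(x_i)$ then yields the desired Weyl polynomial: $\hat H^p$ is automatically central, and its inverse Frobenius untwist equals $\theta_p(\xi_i)$. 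The case of the central generator $\eta_j$ is handled identically with $\hat H := \hat\theta_p(d_j)$.

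For uniqueness, I would filter $W_{n,\mathbb{F}_p}$ by total degree, so that the associated graded algebra is the commutative polynomial ring $\mathbb{F}_p[x_1,\ldots,d_n]$, on which the $p$-th power coincides with the Frobenius endomorphism and is therefore injective (the polynomial ring being reduced). Given two candidates $\hat H_1, \hat H_2$, the equality $\hat H_1^p = \hat H_2^p$ descends to equality of Frobenii in the associated graded, forcing the leading graded components of $\hat H_1$ and $\hat H_2$ to coincide. Writing $\Delta := \hat H_1 - \hat H_2$, Jacobson's formula gives
$$
0 \;=\; \hat H_1^p - \hat H_2^p \;=\; \Delta^p + \sum_{k=1}^{p-1} s_k(\hat H_2, \Delta),
$$
where each $s_k$ is a universal Lie polynomial built from iterated commutators of $\hat H_2$ and $\Delta$. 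The Weyl commutator is strictly degree-dropping (since $[d_i,x_j]=\delta_{ij}$ carries degree zero while its arguments have positive degree), so every correction term $s_k$ sits strictly below $\Delta^p$ in the filtration; the leading graded part of the identity above is then just the Frobenius of the leading part of $\Delta$, which must vanish. Injectivity of Frobenius on the reduced graded ring forces the leading component of $\Delta$ to vanish, and a downward induction on the filtration level of $\Delta$ then yields $\Delta = 0$. That $W_{n,\mathbb{F}_p}$ is an Azumaya domain with reduced centre is used implicitly to ensure no nilpotent obstruction arises.

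The principal difficulty lies in the uniqueness half of the argument, specifically in the bookkeeping needed to ensure that every Jacobson correction $s_k(\hat H_2, \Delta)$ is strictly subdominant to $\Delta^p$. This requires transmitting the degree-drop property of the Weyl commutator through the arbitrarily nested commutators that appear in the universal Lie polynomials $s_k$, which is routine but needs to be tracked carefully; it is essentially a characteristic-$p$ incarnation of the PBW phenomenon for $W_n$. Once this bookkeeping is in place, uniqueness follows, and the lemma furnishes the canonical $p$-th root extraction needed in the construction of the extended lift on $\mathbb{C}(h)$-rational symplectomorphisms in the subsequent step.
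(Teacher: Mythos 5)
Your existence argument is correct and matches the paper's use of Corollary \ref{correduction}: setting $\hat H := \hat\theta_p(x_i)$ works because the consistency of reductions makes the Frobenius-untwisted $p$-th power of $\hat H$ equal to $\theta_p(\xi_i)$.

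The uniqueness argument, however, departs from the paper's and contains a genuine gap. The step claiming that ``every correction term $s_k(\hat H_2,\Delta)$ sits strictly below $\Delta^p$ in the filtration'' is not justified. Each $s_k$ is a sum of nested Lie monomials of length $p$ with $k$ occurrences of $\hat H_2$, $p-k$ occurrences of $\Delta$, and $p-1$ brackets; since the Weyl commutator drops Bernstein degree by at most two, the available bound is
$\deg s_k \le k\deg\hat H_2 + (p-k)\deg\Delta - 2(p-1)$,
to be compared against $\deg \Delta^p = p\deg\Delta$. The correction $s_k$ is subdominant only when $k(\deg\hat H_2 - \deg\Delta) < 2(p-1)$. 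The leading-term comparison only gives $\deg\Delta < \deg\hat H_2$, so as soon as $\deg\hat H_2 - \deg\Delta \ge 2$ the terms with $k$ near $p-1$ can strictly dominate $\Delta^p$. Consequently the leading graded part of the identity $0 = \Delta^p + \sum_k s_k$ need not be the Frobenius of the leading part of $\Delta$, and the downward induction does not get started. The paper avoids the difficulty entirely by conjugating with the Weyl automorphism $\hat\theta_p$ (again supplied by Corollary \ref{correduction}): this reduces uniqueness to the assertion that $x_i$ is the only element of $W_{n,\mathbb{F}_p}$ whose $p$-th power is $x_i^p$, where a short Newton-polyhedron (leading-term) argument suffices because any competing $p$-th root is forced to have degree one, and there the Jacobson corrections are vacuous. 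You should adopt that conjugation reduction; a direct Jacobson-formula argument would require substantially finer control on the corrections than degree counting alone provides.
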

\begin{proof}
We prove the statement for $H = \theta_p(\xi_i)$ -- the case $\eta_j$ is identical.

Suppose first that
$$
\theta_p(\xi_i) = \xi_i = x_i^p.
$$
Then the Newton polyhedron of the image $\theta_p(\xi_i)$ has only one vertex, therefore -- as taking the $p$-th power only dilates the Newton polyhedron -- the polynomial $\hat{\theta}_p(x_i)$ must be equal to $x_i$.

The general case uses Corollary \ref{correduction}, which states that modulo $[p]$ reductions of $\theta$ and its lifting $\hat{\theta}$ are consistent -- that is, for almost all $p$ in $[p]$, the restriction of $\hat{\theta}_p$ to the center (twisted by the inverse Frobenius acting on the coefficients) coincides with $\theta_p$. The application is as follows. Suppose
$$
H = \theta_p(\xi_i)
$$
is the image of $\xi_i$. From Corollary \ref{correduction} we know that
$$
H = \Fr_*^{-1}\hat{\theta}_p(x_i^p)
$$
where $\Fr_*^{-1}$ is the action of the inverse Frobenius automorphism on the coefficients of the polynomial. The last equation is equivalent to
$$
\hat{\theta}_p^{-1}(\Fr_*(H)) = x_i^p.
$$
By the special case above, there exists a unique Weyl polynomial $\hat{G}$ such that
$$
\hat{G}^p = \hat{\theta}_p^{-1}(\Fr_*(H)).
$$
But then
$$
H = \Fr_*^{-1}(\hat{\theta}_p(\hat{G}^p))
$$
which is exactly what we wanted.
\end{proof}

It will be convenient to denote the one-to-one correspondence between modulo $p$ reductions of central polynomials coming from characteristic zero symplectomorphisms with their Weyl liftings by $\Phi^h_p$ (for this correspondence is, as evidenced by Lemma \ref{twistedliftcanon}, shares essential nature with the characteristic zero direct homomorphism $\Phi^h$).

We now apply the above lemma in order to establish the form of the pre-image Weyl polynomial in the case of auxiliary variables $u,v$ and the central polynomial of a special type.
\begin{lem}\label{formlemma}
Let $u,v$ denote the extra Poisson variables, and let
$$
H = u + h^k\varphi(x_i)
$$
be the image of $u$ under the twisted automorphism coming from $\varphi$ as above ($\varphi(x_i)$ is rational in $h$ but $h^k\varphi(x_i)$ is polynomial in $h$). Then the unique pre-image $\hat{H}$ of $H$ with respect to the correspondence $\Phi^h_p$ of the previous lemma has the form
$$
\hat{H} = \hat{u} + h^k P_i(x_1,\ldots, d_n,h)
$$
where $P_i$ is rational in $h$.
\end{lem}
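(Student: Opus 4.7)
The plan is to exploit the decomposition $\varphi_{t,\lambda}=\varphi_a\circ\psi_\lambda\circ\varphi_a^{-1}$ together with the mod-$p$ correspondence of Corollary \ref{correduction} to constrain the Weyl pre-image $\hat H$. Since $h^k\varphi(x_i)$ is polynomial in $h$ by construction, the twisted automorphism $\varphi_{t,\lambda}$ is polynomial in $h$ on every generator image and thus admits a unique Weyl lifting $\hat\varphi_{t,\lambda}=\Theta^h(\varphi_{t,\lambda})$ by Theorem \ref{inversethm}. I set $\hat H=\hat\varphi_{t,\lambda}(\hat u)$, which a priori is an arbitrary Weyl polynomial in $x_1,\ldots,d_n,\hat u,\hat v,h$, and will successively eliminate $\hat u$ and $\hat v$ before rescaling by $h^k$.

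First, $\hat\varphi_{t,\lambda}$ fixes $\hat v$: both $\psi_\lambda$ and $\varphi_a$ fix $v$, so $\varphi_{t,\lambda}(v)=v$, and then Corollary \ref{correduction} combined with uniqueness of $p$-th roots (Lemma \ref{twistedliftcanon}) yields $\hat\varphi_{t,\lambda,p}(\hat v)=\hat v$ for almost all $p$; ultraproduct assembly returns $\hat\varphi_{t,\lambda}(\hat v)=\hat v$ in characteristic zero, and the same argument gives $\hat\varphi_{t,\lambda}(h)=h$. Applying $\hat\varphi_{t,\lambda}$ to the Weyl commutator $[\hat u,\hat v]=\pm h$ produces $[\hat H,\hat v]=\pm h$. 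Writing $\hat H=\sum_{n\ge0}\hat u^n f_n$ with $f_n$ a Weyl polynomial in $x_1,\ldots,d_n,\hat v,h$ (no $\hat u$), and using $[\hat u^n,\hat v]=\pm nh\hat u^{n-1}$ together with $[f_n,\hat v]=0$, the identity $\pm h\sum_n n\hat u^{n-1}f_n=\pm h$ forces $f_1=1$ and $f_n=0$ for $n\ne 1$. Hence $\hat H=\hat u+G$ with $G$ free of $\hat u$.

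Second, $G$ is free of $\hat v$: centrality of $\hat u$ in characteristic $p$ gives $\hat H^p=\hat u^p+G^p$, and Corollary \ref{correduction} identifies this, via inverse Frobenius on coefficients and the central identification $\hat u^p,x_i^p,d_j^p,\hat v^p \leftrightarrow u,x_i,d_j,v$, with the central counterpart of $H=u+h^k\varphi(x_i)$. Since $H$ contains no $v$, this central counterpart contains no $\hat v^p$, so $G^p$ lies in the subring generated by $x^p,d^p,h$. Writing $G=\sum_{a\ge 0}\hat v^a g_a$ with $g_a$ a Weyl polynomial in $x_1,\ldots,d_n,h$, and noting that $\hat v$ commutes with every $g_a$, the leading $\hat v$-degree term of $G^p$ equals $\hat v^{a_{\max}p}g_{a_{\max}}^p$; the vanishing of all positive-$\hat v$-degree monomials in $G^p$ forces $g_{a_{\max}}^p=0$, and a leading-monomial argument (the leading Weyl monomial of $g^p$ is the $p$-th power of that of $g$, with nonzero coefficient whenever $g\ne 0$) yields $g_{a_{\max}}=0$. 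Induction on $a_{\max}$ concludes $G\in W^h_{n,\mathbb C}$. Setting $P_i:=G/h^k$, a Laurent polynomial in $h$ with coefficients polynomial in $x_1,\ldots,d_n$, yields $\hat H=\hat u+h^k P_i$ with $P_i$ rational in $h$, as claimed.

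The main technical obstacle is the second step above: careful tracking of the Frobenius twist and the central identification (in particular whether the Poisson $h$ corresponds to central $h^p$ or to $h$ modulo a sign determined by the induced Poisson bracket) is needed to correctly place $G^p$ into $\mathbb F_p[x^p,d^p,h]$. The remaining steps reduce to commutator calculus in the Weyl algebra together with the Frobenius-like leading-term argument for $p$-th powers, both of which are already implicit in the machinery supporting Lemma \ref{twistedliftcanon}.
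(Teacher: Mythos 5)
Your first step is valid and takes a route genuinely different from the paper's: having shown (via Corollary \ref{correduction} and Lemma \ref{twistedliftcanon}) that the lifted automorphism fixes $\hat v$ and $h$, you apply it to $[\hat u,\hat v]=\pm h$ to get $[\hat H,\hat v]=\pm h$, and the $\hat u$-adic expansion of $\hat H$ then forces $\hat H=\hat u+G$ with $G$ free of $\hat u$ in a single stroke. (A small slip: the constraint $\pm h\sum_{n\ge 1}n\hat u^{n-1}f_n=\pm h$ gives $f_1=1$ and $f_n=0$ only for $n\ge 2$; $f_0$ is unconstrained, and $G=f_0$ is exactly what you carry forward.) The paper eliminates $\hat v$ first, by a Newton-polyhedron weight argument, and only afterwards identifies the pure-$\hat u$ part as $Q(\hat u)=\hat u$ and disposes of mixed $\hat u$-terms; your commutator trick compresses those two steps.

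Your second step, however, has a genuine gap. The claim that ``centrality of $\hat u$ in characteristic $p$ gives $\hat H^p=\hat u^p+G^p$'' is false: $\hat u$ is not central (only $\hat u^p$ is), and since at this stage of your ordering $G$ may still involve $\hat v$, $\hat u$ and $G$ do not commute: $[\hat u,G]=\pm h\,\partial_{\hat v}G$. Hence $(\hat u+G)^p=\hat u^p+G^p+\sum_{i=1}^{p-1}s_i(\hat u,G)$ carries Jacobson correction terms that your proof never addresses, and the leading-$\hat v$-degree argument you then run is applied to $G^p$ rather than to the actual $p$-th power. The paper avoids this by reversing the order: it first shows $\hat H$ contains no $\hat v$ (choose a weighting making a $\hat v$-monomial of $\hat H$ the unique top-weight term; the $p$-th power dilates the Newton polyhedron and maps vertices to vertices, so the term survives in $\hat H^p$ and hence $H$ would contain $v$, a contradiction), and only then decomposes $\hat H=Q(\hat u)+R$, where $Q$ and $R$ now genuinely commute and the $p$-th power splits as $Q^p+R^p$ without corrections. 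To rescue your ordering you would have to verify that every $s_i(\hat u,G)$ with $i\ge 1$ has $\hat v$-degree strictly below $pa_{\max}$ (each $\ad(\hat u)$ lowers $\hat v$-degree by one, and at least one $\hat u$ occurs in each $s_i$), so that the coefficient of $\hat v^{pa_{\max}}$ in $\hat H^p$ is still $g_{a_{\max}}^p$, and then invoke that the Weyl algebra is a domain; as written, your argument does not supply this extra accounting.
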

\begin{proof}
We establish the statement in several elementary steps. Firstly, as $H$ does not contain the auxiliary variable $v$, $\hat{H}$ does not contain its Weyl counterpart $\hat{v}$: indeed, otherwise the Newton polyhedron of $H$ would contain (in the case of $v$ carrying great enough weight to make the corresponding monomial the highest-order term) a vertex corresponding to the monomial containing $\hat{v}$. \footnote{Note that $\Phi^h_p$ behaves toward the Newton polyhedra as the homomorphism taking the $p$-th power does.}

Now let
$$
\hat{H} = Q(\hat{u}) + R
$$
where every monomial in $R$ is proportional to generators other than $u$. Then
$$
H = \Phi^h_p(\hat{H}) = \Phi^h_p(Q) + \Phi^h_p(R),
$$
as the two differential operators $Q$ and $R$ commute with each other and therefore taking the $p$-th power is executed as in the commutative case. By Lemma \ref{twistedliftcanon}, we must have
$$
Q(\hat{u}) = \hat{u}.
$$

Finally, we show that if $\hat{H}$ contains monomials which are products of $\hat{u}$ with other generators, then  $\Phi^h_p(\hat{H})\neq H$.  Indeed, if such a monomial had a non-zero coefficient in  $\hat{H}$, then there would exist a grading under which this monomial would be the highest-order term (corresponding to a vertex in the Newton polyhedron). Then the image $\Phi^h_p(\hat{H})$ would also have a monomial corresponding to this highest-order term with non-zero coefficient, as taking the $p$-th power dilates the polyhedron and therefore maps the extremal points to extremal points.

The conclusion is that the polynomial $\hat{H}$ has the form
$$
\hat{u} + \tilde{P}_i(x_1,\ldots, d_n,h).
$$
Taking out $h^k$ from $\tilde{P}_i$ leaves us with the form we needed.
\end{proof}

Lemma \ref{formlemma} provides a canonical way to relate the images $\varphi(x_i)$ and $\varphi(d_j)$ of the initial symplectomorphism with the Weyl pre-images. Therefore, as an array of differential operators, the lifting $\hat{\varphi}$ is well defined. We denote the polynomials in Weyl generators in the correspondence by
$$
(\hat{\varphi}(x_1),\ldots, \hat{\varphi}(d_n)).
$$

We now need to verify the commutation relations in order to establish its homomorphic character. Again we have two lemmas.
\begin{lem}\label{commutationlemma1}
\begin{gather*}
[\hat{\varphi}(x_i), \hat{\varphi}(x_j)] = [\hat{\varphi}(d_i), \hat{\varphi}(d_j)]=0,\\
[\hat{\varphi}(x_i), \hat{\varphi}(d_j)]=0,\;\;i\neq j.
\end{gather*}
\end{lem}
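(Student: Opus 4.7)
The plan is to derive each identity of Lemma \ref{commutationlemma1} by embedding the construction of $\hat\varphi$ into a slightly larger algebra that carries two independent auxiliary Darboux pairs, and then exploiting the fact that the $h$-augmented lifting $\Theta^h$, which by Theorem \ref{mainthmh} is a group isomorphism on $h$-polynomial symplectomorphisms, is an algebra homomorphism and hence sends commuting Weyl generators to commuting elements.

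Concretely, to prove $[\hat\varphi(x_i),\hat\varphi(x_j)]=0$, I would adjoin two auxiliary Darboux pairs $(u_1,v_1)$ and $(u_2,v_2)$ with $\lbrace v_\ell,u_\ell\rbrace=h$ and all other mixed brackets zero, extend $\varphi$ to act as the identity on these four new variables (calling the extension $\varphi_a$), and define the combined linear symplectic twist
\[
\psi_\lambda:\; u_1\mapsto u_1+\lambda x_i,\;\; p_i\mapsto p_i-\lambda v_1,\;\; u_2\mapsto u_2+\lambda x_j,\;\; p_j\mapsto p_j-\lambda v_2
\]
(all other generators fixed); a direct check analogous to the single-pair case preceding Lemma \ref{formlemma} shows $\psi_\lambda$ is symplectic. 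Setting $\lambda=h^k$ with $k$ large enough, the conjugate $\varphi_{t,\lambda}=\varphi_a\circ\psi_\lambda\circ\varphi_a^{-1}$ is polynomial in $h$ even though $\varphi$ itself is only rational in $h$, so Theorem \ref{mainthmh} produces a well-defined $h$-augmented Weyl automorphism $\Theta^h(\varphi_{t,\lambda})$.

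Since the Poisson image of $u_\ell$ under $\varphi_{t,\lambda}$ depends only on the main generators $x_1,\ldots,d_n$ and on $h$ and is independent of the opposite auxiliary pair, the Newton-polyhedron / $\Phi^h_p$-canonicity argument of Lemma \ref{formlemma} extends verbatim and gives
\[
\Theta^h(\varphi_{t,\lambda})(\hat u_1)=\hat u_1+h^kP_i,\qquad \Theta^h(\varphi_{t,\lambda})(\hat u_2)=\hat u_2+h^kP_j,
\]
with $P_i, P_j$ Weyl polynomials in the main generators only. Uniqueness of the Weyl pre-image (Lemma \ref{twistedliftcanon}), applied by specializing one of the two twist strengths to zero to reduce to the single-pair case, identifies $P_i$ with $\hat\varphi(x_i)$ and $P_j$ with $\hat\varphi(x_j)$. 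Since $\Theta^h(\varphi_{t,\lambda})$ is an algebra automorphism, applying it to the relation $[\hat u_1,\hat u_2]=0$ and using that $\hat u_\ell$ commutes with every main-generator Weyl polynomial collapses the expansion to $h^{2k}[\hat\varphi(x_i),\hat\varphi(x_j)]=0$, yielding the required commutation.

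The remaining identities $[\hat\varphi(d_i),\hat\varphi(d_j)]=0$ and $[\hat\varphi(x_i),\hat\varphi(d_j)]=0$ for $i\neq j$ are handled by the same pattern: to extract $\hat\varphi(d_i)$ instead of $\hat\varphi(x_i)$, one replaces the corresponding twist factor by its dual $v_\ell\mapsto v_\ell+\lambda p_i$, $x_i\mapsto x_i+\lambda u_\ell$, which is again symplectic and encodes the image of $d_i$ in the action of the lift on $\hat v_\ell$. The main obstacle is the extension of Lemma \ref{formlemma} to the double-auxiliary setting—one must verify that no cross monomials mixing the two auxiliary pairs appear in $\Theta^h(\varphi_{t,\lambda})(\hat u_\ell)$ and that the extracted Weyl polynomial agrees with the $\hat\varphi$ previously produced by the single-pair construction. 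Both reductions are instances of the Newton-polyhedron plus modulo-$p$ uniqueness reasoning already deployed in Lemmas \ref{twistedliftcanon} and \ref{formlemma}, so once those are confirmed the remaining work is essentially bookkeeping.
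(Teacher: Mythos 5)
Your proof is correct and takes essentially the same approach as the paper: both introduce two auxiliary Darboux pairs, build the double twist $\psi_\lambda$, conjugate $\varphi$ to produce an $h$-polynomial symplectomorphism that can be lifted via $\Theta^h$, read off $\hat\varphi(x_i)$ and $\hat\varphi(x_j)$ from the lifted images of $\hat u_1, \hat u_2$, and then expand $[\hat u_1 + \lambda T_1, \hat u_2 + \lambda T_2]=0$ to conclude $[T_1,T_2]=0$. The only cosmetic difference is that you handle the $d_i$-cases by switching to the dual twist directly, whereas the paper reduces them to the $(x_1,x_2)$ case by relabelling and the Fourier-type automorphism $x_i\mapsto d_i,\ d_i\mapsto -x_i$; and you make explicit the double-pair extension of Lemma \ref{formlemma} that the paper uses implicitly.
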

\begin{proof}
It suffices to prove
$$
[\hat{\varphi}(x_1), \hat{\varphi}(x_2)]=0
$$
thanks to the variable re-labelling and the existence of the "Fourier transform" -- the automorphism
$$
x_i\mapsto d_i, \;\;d_i\mapsto -x_i.
$$
We introduce two pairs of auxiliary Poisson variables,  $u_1, u_2, v_1, v_2$, and for $\lambda = h^k$ and $k$ large enough consider the automorphism $\psi$:
\begin{gather*}
u_1\mapsto u_1 + \lambda x_1,\;\;u_2\mapsto u_2+\lambda x_2\\
p_1\mapsto p_1-\lambda v_1,\;\;p_2\mapsto p_2-\lambda v_2
\end{gather*}
($\psi$ acts an the identity map on the rest of the generators).

We take the twisted automorphism
$$
\varphi_{t,\lambda} = \varphi_a\circ\psi\circ\varphi_a^{-1}
$$
with $\psi$ now being the chosen linear transformation and take $k$ to be large enough so that the twisted automorphism is a polynomial symplectomorphism. We then lift it with $\Theta^h$ to the $h$-augmented Weyl algebra as before.

By Lemma \ref{formlemma}, the images of the Weyl counterparts $\hat{u}_i$ ($i=1,2$) of $u_1, u_2$ under the lifted twisted automorphism will have the form
$$
\hat{u}_i + \lambda T_i,
$$
the polynomials $T_i$ do not contain the auxiliary variables and $\Phi^h_p(T_i) = \varphi(x_i),\;\;i=1,2$.

Now, as $\varphi_{t,\lambda}$ and its lifting are automorphisms, we must have
$$
[\hat{u}_1 + \lambda T_1,\hat{u}_2 + \lambda T_2]=0
$$
so that
$$
[\hat{u}_1, \hat{u}_2] + \lambda([\hat{u}_1, T_2] + [T_1, \hat{u}_2]) + \lambda^2 [T_1,T_2] = 0
$$
from which it follows immediately that
$$
[T_1,T_2] = 0
$$
as desired.
\end{proof}

\begin{lem}\label{commutationlemma2}
$$[\hat{\varphi}(d_i), \hat{\varphi}(x_i)] = h.$$
\end{lem}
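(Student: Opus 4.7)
The strategy mirrors the proof of Lemma \ref{commutationlemma1}, but with a single pair of auxiliary Poisson variables $u,v$ (satisfying $\{v,u\}=h$) and their Weyl counterparts $\hat u,\hat v$ (satisfying $[\hat v,\hat u]=h$), coupled through a composite linear twist that touches both $x_i$ and $p_i$. The preserved nontrivial relation $[\hat v,\hat u]=h$ on the Weyl side will be what forces the claimed identity. Fix $\lambda=h^k$ for $k$ sufficiently large, extend $\varphi$ by the identity on the auxiliaries (denoted $\varphi_a$), and set $\psi=\psi_1\circ\psi_2$ with
$$
\psi_1:\ u\mapsto u+\lambda x_i,\ p_i\mapsto p_i-\lambda v,\qquad
\psi_2:\ v\mapsto v+\lambda p_i,\ x_i\mapsto x_i-\lambda u.
$$
Each $\psi_j$ is a linear symplectomorphism (the cross terms $\lambda\{u,v\}$ and $\lambda\{x_i,p_i\}$ cancel), so $\psi$ is symplectic; a direct computation gives $\psi(u)=u+\lambda x_i$, $\psi(v)=(1-\lambda^2)v+\lambda p_i$, $\psi(x_i)=(1-\lambda^2)x_i-\lambda u$, $\psi(p_i)=p_i-\lambda v$.

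Form the twisted symplectomorphism $\varphi_{t,\lambda}=\varphi_a\circ\psi\circ\varphi_a^{-1}$. For $k$ large enough it is polynomial in $h$, and its action on the auxiliaries reads
$$
\varphi_{t,\lambda}(u)=u+\lambda\varphi(x_i),\qquad
\varphi_{t,\lambda}(v)=(1-\lambda^2)v+\lambda\varphi(p_i).
$$
By Theorem \ref{inversethm}, the lifting $\Theta^h$ applies to $\varphi_{t,\lambda}$ and produces a genuine $h$-augmented Weyl automorphism $\hat\varphi_{t,\lambda}$ on the extended algebra, which must therefore preserve every Weyl commutator. The Newton-polyhedron vertex analysis used in Lemma \ref{formlemma}, together with the uniqueness clause of Lemma \ref{twistedliftcanon}, applied separately to the $u$- and $v$-components of the image, forces
$$
\hat\varphi_{t,\lambda}(\hat u)=\hat u+\lambda X,\qquad
\hat\varphi_{t,\lambda}(\hat v)=(1-\lambda^2)\hat v+\lambda P,
$$
where $X:=\hat\varphi(x_i)$ and $P:=\hat\varphi(d_i)$ are polynomials in the original Weyl generators and $h$, containing neither $\hat u$ nor $\hat v$; that these $X$, $P$ coincide with the images extracted in the specialization subsection from the single twists $\psi_1$ and $\psi_2$ respectively is again the uniqueness content of Lemma \ref{twistedliftcanon}.

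With $[\hat v,X]=0$ and $[P,\hat u]=0$ at hand, preservation of the canonical relation $[\hat v,\hat u]=h$ collapses to
$$
h=[(1-\lambda^2)\hat v+\lambda P,\ \hat u+\lambda X]=(1-\lambda^2)h+\lambda^2[P,X],
$$
from which $[P,X]=h$, that is $[\hat\varphi(d_i),\hat\varphi(x_i)]=h$, as required. The only step that is not a direct repeat of earlier arguments, and which I expect to be the main technical obstacle, is the extension of Lemma \ref{formlemma} to the mixed form $(1-\lambda^2)v+\lambda\varphi(p_i)$: one decomposes the hypothetical Weyl pre-image as $Q(\hat v)+R$ with $R$ free of $\hat v$, notes that $Q$ and $R$ commute so $\Phi^h_p$ distributes, matches extremal vertices of the Newton polyhedron to force $Q(\hat v)=(1-\lambda^2)\hat v$, and then applies $\Phi^h_p$-uniqueness on the $v$-free part to force $R=\lambda P$ with the same $P$ that was produced by the pure $\psi_2$-twist.
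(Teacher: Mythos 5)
Your proof is correct and follows essentially the same route as the paper: the paper uses the composite twist $\psi_1\circ\psi_2$ with two independent parameters $\lambda=h^k$ and $\mu=h^{k'}$, obtaining $h=(1-\lambda\mu)h+\lambda\mu[\hat\varphi(d_i),\hat\varphi(x_i)]$, whereas you simply specialize $\mu=\lambda$, which changes nothing of substance. Your explicit remark on extending Lemma~\ref{formlemma} to the mixed image $(1-\lambda^2)v+\lambda\varphi(p_i)$ is actually a bit more careful than the paper, which cites the lemma directly without addressing the scalar $(1-\lambda\mu)$ in front of $\hat v$.
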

\begin{proof}
We proceed in an manner analogous to the previous lemma: we construct the appropriate twisting from whose lifting the relevant images may be read off and then evaluate the commutator.

Let $u,v$ be auxiliary Poisson variables and let
$$
\psi_1: u\mapsto u+\lambda x_i,\;\;p_i\mapsto p_i - \lambda v,
$$
$$
\psi_2: v\mapsto v+\mu p_i,\;\; x_i\mapsto x_i - \mu u
$$
(in both cases the other generators are mapped to themselves).
Consider the composition
$$
\theta = \psi_1\circ\psi_2.
$$
Then
$$
\theta(u) = u+\lambda x_i,\;\; \theta(v) = v+\mu p_i - \lambda\mu v
$$
and
$$
\theta(x_i) = x_i - \mu u - \lambda\mu x_i,\;\;\theta(p_i) = p_i - \lambda v.
$$
Take
$$
\varphi_{t,\lambda\mu} = \varphi_a\circ\theta\circ\varphi_a^{-1}
$$
where as before $\varphi_a$ extends from $\varphi$ by the identical action on $u,v$. The images of $u,v$ under $\varphi_{t,\lambda\mu}$ read:
$$
\varphi_{t,\lambda\mu}(u) = u + \lambda \varphi(x_i),\;\;\varphi_{t,\lambda\mu}(v) = (1-\lambda\mu)v +\mu\varphi(p_i).
$$
By properly selecting $\lambda$ and $\mu$ as polynomials in $h$, we can make $\varphi_{t,\lambda\mu}$ into a polynomial $h$-augmented symplectomorphism and therefore lift it with $\Theta^h$. Again, by Lemma \ref{formlemma}, the action of the lifted automorphism on $\hat{u}$ and $\hat{v}$ will have the needed form (with the part dependent on $x_i$, $d_j$ given by the images under $\hat{\varphi}$). Now, the commutator of the images of $\hat{u}$ and $\hat{v}$ must be equal to $h$. We therefore have the following:
$$
h = [(1-\lambda\mu)\hat{v} + \mu\hat{\varphi}(d_i), \hat{u}+\lambda\hat{\varphi}(x_i)]=h(1-\lambda\mu) + \lambda\mu[\hat{\varphi}(d_i),\hat{\varphi}(x_i)]
$$
from which the statement follows directly.
\end{proof}

The conclusion is that augmented symplectomorphisms rational in $h$ are lifted to endomorphisms of the augmented Weyl algebra (also rational in $h$) by a homomorphism whose restriction to points polynomial in $h$ coincides with $\Theta^h$. What remains to show is that the lifted mappings are automorphisms, however, this is accomplished by an argument similar to that for points polynomial in $h$ (cf. discussion immediately preceding Proposition \ref{thetaproperties}). Also, thanks to Lemma \ref{formlemma}, we known that the lifting of points rational in $h$ is also the inverse mapping to the extension to these points of the direct homomorphism $\Phi^h$. The specialization to $h=1$ may now be safely executed, and the Main Theorem follows.

\end{document}